\documentclass{amsart}
\usepackage{amssymb}
\usepackage{stmaryrd} 
\usepackage{amsmath} 
\usepackage{amscd}
\usepackage{amsthm}
\usepackage{amsbsy}
\usepackage[margin=1.2 5in]{geometry}
\usepackage{commath, longtable}
\usepackage{comment, enumerate}
\usepackage{geometry}
\usepackage[matrix,arrow]{xy}
\usepackage{hyperref}
\usepackage{mathrsfs}
\usepackage{color}
\usepackage{float}
\usepackage{mathtools,caption}
\usepackage[table,dvipsnames]{xcolor}
\usepackage{tikz-cd}
\usepackage{longtable}
\usepackage[utf8]{inputenc}
\usepackage[OT2,T1]{fontenc}
\usepackage[normalem]{ulem}
\usepackage{hyperref}
\usepackage[customcolors]{hf-tikz}
\usepackage{enumitem}
\newlist{primenumerate}{enumerate}{1}
\setlist[primenumerate,1]{label={\arabic*$'$}}
\hypersetup{
 colorlinks=true,
 linkcolor=DarkOrchid,
 filecolor=blue,
 citecolor=olive,
 urlcolor=orange,
 pdftitle={Gambheera-Kundu project},
 }
\usepackage{booktabs}

\DeclareSymbolFont{cyrletters}{OT2}{wncyr}{m}{n}
\DeclareMathSymbol{\Sha}{\mathalpha}{cyrletters}{"58}
\DeclareMathSymbol{\Zha}{\mathalpha}{cyrletters}{"11}

\newcommand{\DK}[1]{\textcolor{purple}{#1}}

\newcommand{\edit}[1]{\textcolor{black}{#1}}

\newtheorem{theorem}{Theorem}[section]
\newtheorem{lemma}[theorem]{Lemma}

\newtheorem{proposition}[theorem]{Proposition}
\newtheorem{corollary}[theorem]{Corollary}
\newtheorem{definition}[theorem]{Definition}

\numberwithin{equation}{section}
\newtheorem{lthm}{Theorem}

\theoremstyle{remark}
\newtheorem{remark}[theorem]{Remark}

\newcommand{\val}{\vec{\boldsymbol{\alpha}}}
\newcommand{\veb}{\vec{\boldsymbol{\beta}}}
\newcommand{\rk}{\operatorname{rk}}
\newcommand{\tr}{\operatorname{tr}}
\newcommand{\EC}{\mathsf{E}}
\newcommand{\fp}{\mathfrak{p}}
\newcommand{\fm}{\mathfrak{m}}
\newcommand{\cG}{\mathcal{G}}
\newcommand{\cM}{\mathcal{M}}
\newcommand{\cL}{\mathcal{L}}

\newcommand{\Gal}{\operatorname{Gal}}
\newcommand{\Qp}{\mathbb{Q}_p}
\newcommand{\sG}{\mathsf{G}}
\newcommand{\Kn}{\mathbb{K}_{n}}
\newcommand{\K}{\mathbb{K}}
\newcommand{\Zp}{\mathbb{Z}_p}

\newcommand{\Z}{\mathbb{Z}}
\newcommand{\A}{\mathbb{A}}
\newcommand{\Q}{\mathbb{Q}}
\newcommand{\cyc}{\operatorname{{cyc}}}
\newcommand{\ord}{\operatorname{ord}}
\newcommand{\image}{\operatorname{im}}
\newcommand{\Sel}{\operatorname{Sel}}
\newcommand{\charac}{\operatorname{char}}
\newcommand{\cok}{\operatorname{coker}}
\renewcommand{\exp}{\operatorname{exp}}

\makeatletter
\theoremstyle{plain} 
\newtheorem*{intr@thm}{\intr@thmname}

\newtheorem*{c@njecture}{\conjn@name}
\newcommand{\myl@bel}[2]{
 \protected@write \@auxout {}{\string \newlabel {#1}{{#2}{\thepage}{#2}{#1}{}} }
 \hypertarget{#1}{}
 } 

 \renewcommand{\pod}[1]{\allowbreak\mathchoice
  {\if@display \mkern 18mu\else \mkern 8mu\fi (#1)}
  {\if@display \mkern 18mu\else \mkern 8mu\fi (#1)}
  {\mkern4mu(#1)}
  {\mkern4mu(#1)}
}
\makeatother

\makeatletter
\newcommand{\mylabel}[2]{#2\def\@currentlabel{#2}\label{#1}}
\makeatother

\title[]{Structure of (Fine) Mordell--Weil Groups}

\author[R.~Gambheera]{Rusiru Gambheera}
\address[Gambheera]{Department of Mathematics, University of California Santa Barbara, CA 93106, USA}
\email{rusiru@ucsb.edu}

\author[D.~Kundu]{Debanjana Kundu}
\address[Kundu]{Department of Mathematics and Statistics\\ University of Regina \\ Sasktachewan\\ Canada}
\email{debanjana.kundu@uregina.ca}

\begin{document}

\begin{abstract}
In this article we study the algebraic structure of fine Mordell--Weil groups, plus/minus Mordell--Weil groups, Selmer groups, and plus/minus Selmer groups in the cyclotomic $\Zp$-extensions of abelian number fields.
As a first, we prove theorems on the equivariant structure of fine Mordell--Weil groups and plus/minus Mordell--Weil groups.
In other words, we study the explicit shape of the fine, plus/minus objects as a $\Lambda(\cG)$-module with $\cG \simeq \Zp \times \sG$ and $\sG$ a finite abelian group.
We prove refinements of previously known results over $\Q$ for the classical Selmer group and the plus/minus Selmer group, and subsequently also the Shafarevich--Tate group, and the plus/minus Shafarevich--Tate group.
This gives new evidence towards an affirmative answer for the Kurihara--Pollack problem.
\end{abstract}

\subjclass[2020]{11R23, 11R18, 14H52}

\keywords{Iwasawa theory, Fine Mordell–Weil groups, Plus and minus Mordell–Weil groups, Selmer groups, Plus and minus Selmer groups, Kurihara-Pollack problem }

\bigskip
\bigskip
\maketitle

\section{Introduction}

In the study of rational points on \edit{a}belian varieties, the Selmer group plays an important role.
In analogy with the work of K.~Iwasawa on growth of class groups in $\Zp$-extensions, B.~Mazur introduced the study of Selmer groups of elliptic curves in such extensions in \cite{Maz72}, by exploiting the intimate connection between class groups and Selmer groups.
More recently, in \cite{CS05}, J.~Coates--R.~Sujatha initiated the systematic study of a subgroup of the classical Selmer group called the \emph{fine Selmer group}.
They showed that the fine Selmer group has stronger finiteness properties than the classical Selmer group.
In fact, it `interpolates' between the classical Selmer group and the class group.

In this article, we study refinements of the structure of (fine) Mordell--Weil groups over the cyclotomic $\Zp$-extensions of $\Q$ and more general number fields (denoted by $K$).
Throughout we consider elliptic curves $\EC/\Q$ with good reduction at an odd prime $p$.
In particular, we provide evidence towards an affirmative answer for (generalizations of) problems posed by R.~Greenberg and M.~Kurihara--R.~Pollack.

\subsection*{Main Results}
We will now highlight some of the main results that we prove in this paper.

\subsubsection*{Structure of fine Mordell--Weil groups}

A key advantage of studying the fine Selmer group is that it approximates the ideal class group better than the classical Selmer group \textit{and} the results can often be stated uniformly independent of the reduction type.
In particular, analogous to the classical Iwasawa $\mu=0$ conjecture it is expected (see \cite[Conjecture~A]{CS05}) that the dual fine Selmer group \edit{over the cyclotomic $\Z_p$-extension $K_{\cyc}/K$, denoted by} $\Sel_0(\EC/K_{\cyc})^\vee$, is a finitely generated $\Zp$-module.
The fine Mordell--Weil group introduced by C.~Wuthrich \cite{Wut07} plays a key role in studying the fine Selmer group.
A primary goal in this paper is to understand the algebraic structure of fine Mordell--Weil groups in more detail.
By \cite[Theorem~3.3]{Lei23}, we know that the $\Lambda$-module structure of the fine Mordell--Weil group is
\[
\cM(\EC/K_{\cyc})^\vee \sim \bigoplus_{i=1}^{u} \frac{\Lambda}{\langle \Phi_{c_i}(x) \rangle} \text{ for some integers } c_i, u,
\]
\edit{where $\sim$ denotes pseudo-isomorphism of $\Lambda$-modules.}
Here, $\Phi_k(x)$ denotes the $p^k$-cyclotomic polynomial.
On the other hand, our goal is to count the \emph{number of times} each $\Phi_k(x)$ arises in the structure of $\cM(\EC/K_{\cyc})^\vee$.

In the following result, we have technical notation whose explicit description will be introduced in Definition~\ref{theta n}.
Intuitively, $e_n$ is an indicator of whether there is rank jump from the $(n-1)$-th layer to the $n$-th layer of the cyclotomic $\Zp$-extension of $K$.
\edit{For $n\ge0$, write $K_{(n)}$ for the unique sub-extension of $K_{\cyc}/K$ of degree $p^n$ and define 
\[
e_n=
\begin{cases}
\frac{\rk_{\Z}\EC(K_{(n)})-\rk_{\Z}\EC(K_{(n-1)})}{\phi(p^n)} & \text{ when } n>0,\\
\rk_{\Z}\EC(K) & \text{ when } n=0,
\end{cases}
\]
where  $\phi$ denotes the Euler totient function.}
Whereas, $\theta_n$ measures the growth of $p$-adic image, due to the rank growth at the $n$-th layer.

\begin{lthm}
\label{thm A}
Let $K/\Q$ be an abelian extension with Galois group $\sG$ such that $K \cap \Q_{\cyc}= \Q$.
Let \edit{the exponent of the group $\sG$ denoted by $\exp(\sG)$} be $m$ and $p\geq 3$ be a fixed odd prime which does not split in $\Q(\zeta_m)/\Q$.
Let $\EC/\Q$ be an elliptic curve with good reduction at $p$.
Suppose that the $p$-primary fine Shafarevich--Tate group $\Zha(\EC/K_{(n)})[p^\infty]$ is finite for each $n$.
Then the $\Lambda$-structure of the dual fine Mordell--Weil group is given by
\[
\cM(\EC/K_{\cyc})^\vee \sim \bigoplus_{n\geq 0} \left(\frac{\Lambda}{\langle\Phi_n(x)\rangle}\right)^{e_n - \theta_n}.
\]
In particular, the characteristic ideal of the dual fine Mordell--Weil group is given by
\[
\charac_{\Lambda}(\cM(\EC/K_{\cyc})^\vee) = \Bigg\langle\prod_{n\geq 0} (\Phi_n(x))^{e_n -\theta_n} \Bigg\rangle.
\]
\end{lthm}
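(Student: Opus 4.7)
The plan is to read off the multiplicities of each $\Phi_n(x)$ directly from the decomposition in \cite[Theorem~3.3]{Lei23} by computing $\Z_p$-ranks at every finite layer of the cyclotomic tower and comparing consecutive levels. Set $m_n := \#\{i : c_i = n\}$, so it suffices to prove $m_n = e_n - \theta_n$ for all $n \geq 0$. The first step is an Iwasawa-style control theorem relating $\cM(\EC/K_{(n)})^\vee$ to the $\Gamma_n$-coinvariants $\bigl(\cM(\EC/K_{\cyc})^\vee\bigr)_{\Gamma_n}$: the hypothesis that $\Zha(\EC/K_{(n)})[p^\infty]$ is finite at every $n$ ensures this comparison has finite kernel and cokernel, hence preserves $\Z_p$-ranks, and, combined again with the finiteness of $\Zha$, also identifies $\rk_{\Z_p}\cM(\EC/K_{(n)})^\vee$ with the Mordell--Weil rank $r_n := \rk_{\Z}\EC(K_{(n)})$.

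Next, since $(\Lambda/\Phi_k(x))_{\Gamma_n}$ has $\Z_p$-rank $\phi(p^k) = \deg\Phi_k$ when $k \leq n$ and is finite otherwise, one obtains
\[
r_n = \sum_{k=0}^{n} m_k\,\phi(p^k),
\]
so that $\phi(p^n)\, m_n = r_n - r_{n-1}$ for $n \geq 1$ (with $m_0 = r_0$). With the raw multiplicities in hand, I would match them against the quantities from Definition~\ref{theta n}: by construction $e_n$ detects whether the Mordell--Weil rank grows at the $n$-th layer, while $\theta_n$ records the portion of that growth forced by the equivariant $\sG$-structure on the $p$-adic points (new points arising as Galois conjugates of old ones over the abelian base $K$). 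Unwinding these definitions shows that the number of genuinely new $\Phi_n$-eigenspaces equals $e_n - \theta_n$, and the characteristic ideal statement then follows immediately as the product of elementary divisors.

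The main obstacle is making the equivariant bookkeeping between the naive rank difference $(r_n - r_{n-1})/\phi(p^n)$ and the refined quantity $e_n - \theta_n$ precise. The assumption that $p$ does not split in $\Q(\zeta_m)/\Q$, where $m = \#\operatorname{Ann}(\sG)$, guarantees that distinct irreducible $\overline{\Q}_p$-characters of $\sG$ remain inequivalent after restriction to a decomposition group at $p$, so the $\Z_p[\sG]$-module $\EC(K_{(n)})\otimes\Z_p$ decomposes cleanly into $\Z_p$-rational isotypic components. This is exactly what allows $\theta_n$ to capture the Galois-redundancy correction at the level of $\Z_p$-modules and to complete the identification $m_n = e_n - \theta_n$ without introducing fractional contributions from characters defined only over a ramified extension.
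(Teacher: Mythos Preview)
There is a genuine gap in your argument, and it stems from a misidentification of the fine Mordell--Weil group with the full Mordell--Weil group. You write that the finiteness of $\Zha$ ``identifies $\rk_{\Z_p}\cM(\EC/K_{(n)})^\vee$ with the Mordell--Weil rank $r_n := \rk_{\Z}\EC(K_{(n)})$'', but this is false: $\cM(\EC/K_{(n)})$ is by definition the kernel of the localization map from $\EC(K_{(n)})\otimes\Qp/\Zp$ to the local points above $p$, so its $\Zp$-corank is strictly smaller than $r_n$ whenever any global point survives $p$-adically. Your displayed identity $r_n=\sum_{k\le n} m_k\,\phi(p^k)$ therefore does not hold; what actually holds is $\dim_{\Qp} V_p\cM(\EC/K_{(n)}) = \sum_{k\le n} m_k\,\phi(p^k)$, and taking successive differences of this gives no direct access to $e_n$. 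In fact your computation, taken at face value, yields $m_n = (r_n-r_{n-1})/\phi(p^n) = e_n$, not $e_n-\theta_n$.

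Relatedly, your interpretation of $\theta_n$ as a correction for ``Galois redundancy'' from the $\sG$-action is not what Definition~\ref{theta n} says. The quantity $\theta_n$ is the $\Qp$-dimension of $\image(\eta_n)[\Phi_n(x)]$, where $\eta_n\colon \EC(K_{(n)})^\bullet\to \EC(\Kn)^\bullet$ is the $p$-adic localization map; it measures how much of the $\Phi_n$-eigenspace of the global Mordell--Weil group is detected locally at $p$, and has nothing to do with points being conjugate over $K$. (When $K=\Q$ the group $\sG$ is trivial, yet $\theta_n\in\{0,1\}$ is still present.) The paper obtains $m_n = e_n - \theta_n$ precisely by taking $\Phi_n$-torsion in the exact sequence
\[
0 \longrightarrow V_p\cM(\EC/K_{(n)}) \longrightarrow \EC(K_{(n)})^\bullet \xrightarrow{\ \eta_n\ } \EC(\Kn)^\bullet
\]
and invoking Lemma~\ref{lemma 3 note 1}, which computes $\image(\eta_n)[\Phi_n(x)]$ explicitly; this lemma is where the non-splitting hypothesis on $p$ in $\Q(\zeta_m)$ is actually used, to guarantee that each $W_{\val}\otimes\Qp$ remains irreducible so that a single nonzero local image forces the entire irreducible component into $\image(\eta_n)$. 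Your proposal never introduces $\eta_n$ and so cannot recover the subtraction of $\theta_n$.
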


Even without the hypothesis that $\edit{\exp}(\mathsf{G})=m$ and the fixed odd prime $p$ is non-split in $\Q(\zeta_m)$, we can recover at least one divisibility; see Remark~\ref{rem: one div}.
Subsequently, we also study an equivariant refinement of this above result in Theorem~\ref{thm 4.12}, i.e., we provide a precise structure of $\cM(\EC/K_{\cyc})^\vee$ as a $\Lambda[\sG]$-module.
We discuss the situation involving non-abelian extensions briefly in Remark~\ref{Rem non abelian}.

\subsubsection*{Generalization of a problem of Greenberg}
In \cite[Problem~0.7]{KP07}, Greenberg posed the following problem 
\begin{equation}
\label{Gr}\tag{Gr}
\charac_\Lambda(\Sel_0(\EC/\Q_{\cyc})^\vee)\stackrel{?}{=}\left(\prod_{e_n>0}\Phi_n(x)^{e_n-1}\right).
\end{equation}
In particular, it suggests that $\Sel_0(\EC/\Q_{\cyc})^\vee$ could be pseudo-isomorphic to $\Sel_0(\EC/\Q_{\cyc})^{\iota,\vee}$ as $\Lambda$-modules, where $\iota$ denotes the inversion sending a group-like element $\sigma\in\Lambda$ to $\sigma^{-1}$, and $M^\iota$ denotes the $\Lambda$-module obtained from a $\Lambda$-module $M$, where the action is twisted by $\iota$.

If $\Zha(\EC/\Q_{\cyc})[p^\infty]$ is finite (as predicted by Wuthrich in \cite[Conjecture~8.2]{Wut07}) 
then Greenberg's question is known to have a positive answer; see \cite[Theorem~C]{Lei23}.
Note that in the statement of \cite[Theorem~C]{Lei23} it is assumed that $\Sha(\EC/\Q_{(n)})[p^\infty]$ is finite for all $n$ but the proof only requires that $\Zha(\EC/\Q_{(n)})[p^\infty]$ is finite for each layer.

\subsubsection*{Generalization of a problem of Kurihara--Pollack}
When $\EC/\Q$ has good \emph{supersingular} reduction at $p$ with $a_p(\EC) = 0$, Kurihara and Pollack posed the following problem in \cite[Problem~3.2]{KP07}
\begin{equation}
\label{KP}\tag{KP}
\gcd(L_p^+, L_p^-)\stackrel{?}{=}\left(x^{e_0}\prod_{\substack{n>0\\ e_n>0}}\Phi_n(x)^{e_n-1}\right).
\end{equation}
where $L_p^{\pm}$ are Pollack’s $p$-adic $L$-functions defined in \cite{Pol03}.
We remind the reader that the relationship between the problem posed by Greenberg and Kurihara--Pollack ha\edit{s} been studied in \cite[Section~3]{KP07}.
In order to provide an answer to this question, A.~Lei introduced the notion of $\pm$-Mordell--Weil groups and proved that the description of $\gcd(L_p^+, L_p^-)$ predicted by the Kurihara--Pollack problem is equal to the greatest common divisor of the characteristic ideals of the Pontryagin duals of the plus and minus Mordell--Weil groups.
In particular, he proved in \cite[Theorem~D]{Lei23} that if $\Sha(\EC/\Q_{(n)})[p^\infty]$ is finite for all $n$ then
\[
\gcd(\charac_{\Lambda}(\cM^+(\EC/\Q_{\cyc})^\vee), \charac_{\Lambda}(\cM^-(\EC/\Q_{\cyc})^\vee) )= \Bigg\langle x^{e_0}\prod_{\substack{n>0\\ e_n>0}}\Phi_n(x)^{e_n-1} \Bigg\rangle.
\]
Furthermore, he explains that the answer to the Kurihara--Pollack problem is affirmative under the hypotheses that certain $\pm$ Shafarevich--Tate groups are finite and that Kobayashi’s plus and minus main conjecture holds.
For the definition of these objects see Section~\ref{prelim: good supersingular}.
These results were generalized in \cite{Ray23} to the case of signed Selmer groups and signed Shafarevich--Tate groups of abelian varieties.

Our contribution is that we prove a version of Lei's theorem when the base field is a more general number field $K$.
We emphasize that the main result we prove also provides a refinement of \cite[Theorem~D]{Lei23} when $K=\Q$.
In particular, we provide explicit description of the characteristic ideal of the $\pm$ Mordell--Weil group and not just the structure of the $\gcd$ of the characteristic ideals of the $\pm$ Mordell--Weil groups.
As per the knowledge of the authors, even in the case when $K=\Q$ previous results focused on the structure of the $\gcd$ alone.
More precisely, we prove the following result.

\begin{lthm}
\label{thm B}
Let $\EC/\Q$ be an elliptic curve and $p$ be a fixed prime of good \emph{supersingular} reduction.
Let $K/\Q$ be an abelian extension with Galois group $\sG$ such that $p$ is unramified in $K$.
Let $\edit{\exp}(\sG) = m$ such that $p$ does not split in $\Q(\zeta_m)/\Q$.
Suppose that $a_p(\EC)=0$.
Let $\cM^{\pm}(\EC/K_{\cyc})$ denote the $\pm$ Mordell--Weil groups over $K_{\cyc}$.
Further assume that $\Zha(\EC/K_{(n)})[p^\infty]$, $\Zha^{\pm}(\EC/K_{(n)})[p^\infty]$ are finite for all $n$.
Then
\begin{align*}
    \charac_{\Lambda}(\cM^{+}(\EC/K_{\cyc})^\vee) & =\Bigg\langle\ x^{e_0} \prod_{\substack{n>0\\ n \text{ odd}}} \Phi_n(x)^{e_n - \theta_n} \prod_{\substack{n>0\\ n \text{ even}}} \Phi_n(x)^{e_n}\Bigg\rangle\\\
     \charac_{\Lambda}(\cM^{-}(\EC/K_{\cyc})^\vee) & = \Bigg\langle x^{e_0} \prod_{\substack{n>0\\ n \text{ odd}}} \Phi_n(x)^{e_n} \prod_{\substack{n>0\\ n \text{ even}}} \Phi_n(x)^{e_n  - \theta_n} \Bigg\rangle.
\end{align*}
In particular,
\[
\gcd\left( \charac_{\Lambda}(\cM^{+}(\EC/K_{\cyc})^\vee), \charac_{\Lambda}(\cM^{-}(\EC/K_{\cyc})^\vee)\right) =\Bigg\langle x^{e_0} \prod_{n>0} \Phi_n(x)^{e_n - \theta_n} \Bigg\rangle.
\]
\end{lthm}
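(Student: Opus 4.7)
The plan is to parallel the strategy behind Theorem~A, tracking the parity-sensitive way in which Kobayashi's $\pm$-local conditions at the supersingular prime $p$ modify the multiplicities of the cyclotomic factors.

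First, under the standing finiteness hypotheses on $\Zha$ and $\Zha^{\pm}$ at every layer, I would establish the structural decomposition
\[
\cM^{\pm}(\EC/K_{\cyc})^\vee \sim \bigoplus_{n \geq 0} \left(\frac{\Lambda}{\langle \Phi_n(x)\rangle}\right)^{m_n^{\pm}}
\]
for suitable non-negative integers $m_n^\pm$. This is an analogue of \cite[Theorem~3.3]{Lei23}, extended to the abelian base field $K$. The hypotheses that $p$ is unramified in $K$ and does not split in $\Q(\zeta_m)/\Q$ are precisely what is needed to run Lei's argument uniformly across the primes of $K$ above $p$ and reduce the $\pm$-local conditions, fibre-wise, to Kobayashi's formalism over the completion.

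Second, I would compute the multiplicities $m_n^\pm$ by the same dual-Selmer bookkeeping used for Theorem~A, but with the fine local condition replaced by Kobayashi's $+$ or $-$ local condition. The parity dichotomy built into the Coleman power series (equivalently, Kobayashi's jet calculations on the formal group) forces the following. For $n=0$, both $\cM^+$ and $\cM^-$ contribute $e_0$, yielding the common $x^{e_0}$ factor. For odd $n>0$, the $+$-condition absorbs the $p$-adic image correction $\theta_n$, giving $m_n^+ = e_n - \theta_n$, while the $-$-condition is transparent to this correction, giving $m_n^- = e_n$. For even $n>0$, the roles of $+$ and $-$ interchange. Substituting these multiplicities into the decomposition yields the two displayed characteristic ideals.

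The gcd identity is then immediate: since $0 \le \theta_n \le e_n$, one has $\min(m_n^+, m_n^-) = e_n - \theta_n$ for every $n \ge 0$, so
\[
\gcd\Bigl(\,\prod_{n\geq 0} \Phi_n(x)^{m_n^+},\; \prod_{n\geq 0} \Phi_n(x)^{m_n^-}\,\Bigr) = x^{e_0}\prod_{n>0}\Phi_n(x)^{e_n-\theta_n}.
\]
The main obstacle is the multiplicity count in the second step: pinning down exactly when the $\theta_n$ correction is, or is not, felt requires a careful Kobayashi-style analysis of the kernel and cokernel of the restriction maps $\cM^\pm(\EC/K_{(n)}) \to \cM^\pm(\EC/K_{(n-1)})$ at each layer. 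It is exactly this parity dichotomy — made available in the present generality by the unramified-and-non-split hypotheses on $p$ — that drives the entire theorem.
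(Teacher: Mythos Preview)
Your first step is correct and matches the paper: under the finiteness hypotheses, the pseudo-isomorphism $\cM^{\pm}(\EC/K_{\cyc})^\vee \sim \bigoplus_{n \geq 0} (\Lambda/\langle \Phi_n(x)\rangle)^{r_n^{\pm}}$ holds, essentially by extending \cite[Theorem~5.5]{Lei23} to the abelian base field $K$.

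The gap is in your second step. Invoking ``Coleman power series'' or ``Kobayashi's jet calculations'' is not a mechanism for computing $r_n^{\pm}$, and the restriction maps $\cM^\pm(\EC/K_{(n)}) \to \cM^\pm(\EC/K_{(n-1)})$ you propose to analyse are not what the paper uses. The paper does \emph{not} compute $r_n^+$ and $r_n^-$ independently by adapting the Theorem~A argument to a modified local condition. Instead it proceeds in two coupled stages. First, it uses the short exact sequence
\[
0 \longrightarrow V_p\bigl(\cM^{+}(\EC/K_{(n)}) \cap \cM^{-}(\EC/K_{(n)})\bigr) \longrightarrow V_p\cM^{+}(\EC/K_{(n)}) \oplus V_p\cM^{-}(\EC/K_{(n)}) \longrightarrow \EC(K_{(n)})^{\bullet} \longrightarrow 0,
\]
the surjectivity coming from a bounded-cokernel lemma for $\cM^+_{p^k}+\cM^-_{p^k}$. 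The key input is Kataoka's identification $\hat{\EC}^+(\fm_n)\cap\hat{\EC}^-(\fm_n)=\hat{\EC}(\fm_{-1})$, which forces the $\Phi_n$-torsion of the intersection term to coincide with $V_p\cM(\EC/K_{(n)})[\Phi_n(x)]$; comparing $\Qp$-dimensions then yields only the \emph{sum} relation $r_n^+ + r_n^- = 2e_n - \theta_n$. Second, to separate the two summands, the paper observes the elementary inclusion $\widetilde{\omega}_n^{-}\cdot \EC(K_{(n)})^{\bullet} \subseteq V_p\cM^{+}(\EC/K_{(n)})$, where $\widetilde{\omega}_n^{-}=\prod_{i\text{ odd}}\Phi_i(x)$; since $\gcd(\widetilde{\omega}_n^{-},\Phi_n)=1$ when $n$ is even, taking $\Phi_n$-torsion forces $r_n^+=e_n$, whence $r_n^-=e_n-\theta_n$ by the sum relation (and symmetrically for $n$ odd). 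The case $n=0$ is handled separately via $\cM^{\pm}_{p^k}(\EC/K)=\EC(K)/p^k$. None of these ingredients appear in your proposal, and without the intersection--sum trick and the $\widetilde{\omega}_n^{\mp}$ coprimality argument the parity dichotomy is an assertion rather than a proof.
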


Using our techniques, one can also prove versions of Theorem~\ref{thm A} and Theorem~\ref{thm B} for certain non-abelian extensions; see Remark~\ref{Rem non abelian}.
In addition, the same results can be obtained for the anti-cyclotomic Iwasawa tower of imaginary quadratic fields under some conditions; see Remark~\ref{anticyclotomic remark}.

\subsubsection*{Refinements of the structure of Selmer groups}
The third theme of this article involves studying the structure of the \textit{dual Selmer group} over the extension $\Q_{\cyc}/\Q$.
We study this separately in the good ordinary and the good supersingular reduction case.
Under reasonable hypothesis on the finiteness of the Shafarevich--Tate group and when $p$ is a prime of good ordinary reduction, J.~Lee \edit{\cite{Lee20}} provided precise structure of the dual Shafarevich--Tate group when
\[
\Sel(\EC/\Q_{\cyc})^\vee \sim \left( \bigoplus_{i=1}^r \frac{\Lambda}{\langle g_i^{d_i}(x)\rangle}\right) \oplus \left(\bigoplus_{\substack{j=1\\ f_j\geq 2}}^s \frac{\Lambda}{\langle \Phi_{a_j}^{f_j}(x)\rangle}\right) \oplus \left(\bigoplus_{k=1}^t \frac{\Lambda}{\langle \Phi_{b_k}(x)\rangle} \right),
\]
where the $g_i$'s are coprime to the cyclotomic polynomials.
In particular, he showed that the summand involving $\Phi_{b_k}(x)$ does not appear in the structure of the dual Shafarevich--Tate group.
We prove a version of this result in the case when $p$ is a prime of good \emph{supersingular} reduction.
Using the work of T.~Kataoka \edit{\cite{Kat21}} and other results proven in this paper, we provide finer results on the structure of the dual Selmer group (resp.~$\pm$ dual Selmer group) and the dual Shafarevich--Tate group ($\pm$ dual Shafarevich--Tate group) over the   extension $\Q_{\cyc}/\Q$ when Shafarevich--Tate group (resp.~$\pm$ Shafarevich--Tate group) is finite at each layer of the cyclotomic $\Zp$-extension and
\[
\Sel^{\pm}(\EC/\Q_{\cyc})^\vee \sim \left( \bigoplus_{i=1}^r \frac{\Lambda}{\langle g_i^{d_i}(x)\rangle}\right) \oplus \left(\bigoplus_{\substack{j=1\\ f_j\geq 2}}^s \frac{\Lambda}{\langle \Phi_{a_j}^{f_j}(x)\rangle}\right) \oplus \left(\bigoplus_{k=1}^t \frac{\Lambda}{\langle \Phi_{b_k}(x)\rangle} \right).
\]
In particular, we prove the following result.

\begin{lthm}
Fix an elliptic curve $\EC/\Q$ and an odd prime $p$ of good \emph{ordinary} (resp.~\emph{supersingular}) reduction.
Suppose that $\Sha(\EC/\Q_{(n)})[p^{\infty}]$ \textup{(}resp.~$\Zha^{\pm}(\EC/\Q_{(n)})[p^{\infty}]$\textup{)} is finite for all $n$ and that $\Zha(\EC/\Q_{\cyc})[p^\infty]$ is finite.
Then $g_i(x)$'s and $a_j$'s are distinct.

Also, $\Sha(\EC/\Q_{\cyc})[p^\infty]^\vee$ \textup{(}resp.~$\Zha^{\pm}(\EC/\Q_{\cyc})[p^\infty]^\vee$\textup{)} is pseudo-isomorphic to a cyclic $\Lambda$-module.
\end{lthm}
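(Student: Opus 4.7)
The strategy is to compare the known structures of the (fine or $\pm$) Mordell--Weil and (classical or $\pm$) Selmer groups, and to extract the structure of the Shafarevich--Tate piece via the natural exact sequence. In the ordinary case one has the short exact sequence of discrete $p$-primary $\Lambda$-modules
\begin{equation*}
0 \to \EC(\Q_{\cyc}) \otimes \Qp/\Zp \to \Sel(\EC/\Q_{\cyc})[p^\infty] \to \Sha(\EC/\Q_{\cyc})[p^\infty] \to 0,
\end{equation*}
and the analogous sequence with $\cM^\pm$, $\Sel^\pm$ and $\Zha^\pm$ in the supersingular setting. Dualizing, the finiteness hypotheses force all three terms to be finitely generated $\Lambda$-torsion, so one obtains pseudo-isomorphisms $\Sel(\EC/\Q_{\cyc})^\vee \sim \Sha(\EC/\Q_{\cyc})[p^\infty]^\vee \oplus W$ and $\Sel^{\pm}(\EC/\Q_{\cyc})^\vee \sim \Zha^{\pm}(\EC/\Q_{\cyc})[p^\infty]^\vee \oplus \cM^{\pm}(\EC/\Q_{\cyc})^\vee$, where $W$ denotes the Pontryagin dual of the classical Mordell--Weil piece.

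Next, apply the structure theorem of Lee in the ordinary case, together with its supersingular refinement established earlier in this paper (Theorem~\ref{our analogue of Lee 3.0.6}), to read off the explicit $\Lambda$-module decomposition of $\Sel^\vee$ and of $\Sel^{\pm,\vee}$. Simultaneously apply Theorem~\ref{thm A} (and the analogous $\pm$ statement refining Theorem~\ref{thm B}) to conclude that $W$ and $\cM^{\pm}(\EC/\Q_{\cyc})^\vee$ consist purely of cyclotomic summands $\Lambda/\langle\Phi_n(x)\rangle$ at \emph{first} power, with multiplicities $e_n-\theta_n$ or the appropriate signed analogue. Matching multisets of elementary divisors then shows that every higher-power summand $\Lambda/\langle\Phi_{a_j}(x)^{f_j}\rangle$ of $\Sel^\vee$ with $f_j \geq 2$ survives entirely inside $\Sha^\vee$ (resp.\ $\Zha^{\pm,\vee}$), together with a possibly nontrivial residue consisting of $\Lambda/\langle\Phi_{b_k}(x)\rangle$ summands not cancelled by the Mordell--Weil side.

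The final input is the theorem of T.~Kataoka alluded to in the introduction, which in our framework yields that the cyclotomic part of $\Sha(\EC/\Q_{\cyc})[p^\infty]^\vee$ (resp.\ of $\Zha^{\pm}(\EC/\Q_{\cyc})[p^\infty]^\vee$) is cyclic as a $\Lambda$-module. Since a finitely generated torsion $\Lambda$-module is cyclic on its cyclotomic part if and only if every cyclotomic polynomial $\Phi_n(x)$ occurs in at most one elementary divisor (by the Chinese remainder theorem for $\Lambda$ applied to pairwise coprime distinguished polynomials), combining this cyclicity with the decomposition from the previous paragraph forces $a_1,\ldots,a_s$ to be pairwise distinct and further rules out any surplus $\Lambda/\langle\Phi_{a_j}(x)\rangle$ summand at the same index. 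The main obstacle is to verify the hypotheses of Kataoka's theorem in our setting -- specifically, to rule out pseudo-null discrepancies between Fitting and characteristic ideals on the cyclotomic torsion part -- which is precisely where the layer-wise finiteness of $\Sha(\EC/\Q_{(n)})[p^\infty]$ (resp.\ $\Zha^{\pm}(\EC/\Q_{(n)})[p^\infty]$) together with the finiteness of $\Zha(\EC/\Q_{\cyc})[p^\infty]$ enter crucially.
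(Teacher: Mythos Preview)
Your proposal has a genuine gap: the logical flow is inverted relative to what is actually provable, and the key input you attribute to Kataoka does not say what you claim.

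You propose to first obtain cyclicity of the cyclotomic part of $\Sha(\EC/\Q_{\cyc})[p^\infty]^\vee$ (resp.\ $\Zha^{\pm}$) from ``the theorem of T.~Kataoka alluded to in the introduction,'' and then deduce distinctness of the $a_j$. But Kataoka's result used in this paper is \emph{not} a statement about $\Sha$ at all: it is the computation that $\left(\EC^{\dagger}(\Q_{\cyc,p})\otimes\Qp/\Zp\right)^\vee \simeq \Lambda$ for $\dagger\in\{\emptyset,\pm\}$ (see Lemma~\ref{Lemma 1 Note 2} and Lemma~\ref{Corollary 2 Note 2}). There is no black-box cyclicity theorem for the Tate--Shafarevich piece available to invoke; cyclicity is a \emph{consequence} of the distinctness of the $a_j$, not a premise for it.

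The paper's argument proceeds in the opposite direction and uses a different exact sequence than the one you wrote down. Rather than the Mordell--Weil/Selmer/Sha sequence, the relevant sequence is
\[
\Lambda \simeq \left(\EC^{\dagger}(\Q_{\cyc,p})\otimes\Qp/\Zp\right)^\vee \xrightarrow{\ \phi\ } \Sel^{\dagger}(\EC/\Q_{\cyc})^\vee \longrightarrow \Sel_0(\EC/\Q_{\cyc})^\vee \longrightarrow 0,
\]
with the \emph{fine} Selmer group as cokernel. The point is that the source of $\phi$ is a single copy of $\Lambda$, so for any fixed cyclotomic polynomial $\omega=\Phi_m$, the $\omega$-part of $\image(\phi)$ is pseudo-isomorphic to $(\Lambda/\omega^r)^\alpha$ with $\alpha\in\{0,1\}$. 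If some index $m$ occurred as $a_j$ for $c\geq 2$ values of $j$, comparing $\omega$-exponents of characteristic ideals across this sequence --- using \cite[Theorem~C]{Lei23} for the fine Selmer side and Lee's decomposition (or Theorem~\ref{our analogue of Lee 3.0.6}) for the Selmer side --- yields $f_1+\cdots+f_c = r\alpha + (c-1)$ with each $f_i\geq 2$, which is impossible. This counting is the heart of the proof, and your outline contains no analogue of it.

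A secondary issue: your step ``matching multisets of elementary divisors'' across the Mordell--Weil/Selmer/Sha sequence is not justified, since short exact sequences of torsion $\Lambda$-modules need not split and elementary divisors do not simply partition. Lee's theorem (and its $\pm$ analogue here) gives you compatible decompositions precisely because of additional structure, but you cannot recover the crucial bound $c\leq 1$ from that comparison alone.
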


The upshot is that knowing how the Mordell--Weil rank grows in $\Q_{\cyc}/\Q$ along with information on the $\Lambda$-characteristic ideal of $\Sel(\EC/\Q_{\cyc})$ -- which can be obtained from an $L$-function by the Iwasawa Main Conjecture -- completely determines the structure of $\Sel(\EC/\Q_{\cyc})^\vee$.

As a corollary we can also prove the following result which gives new evidence towards an affirmative answer for the Kurihara--Pollack problem.

\begin{lthm}\label{Theorem D}
Fix an elliptic curve $\EC/\Q$ and an odd prime $p$ such that $a_p =0$.
Suppose that $\Zha^{\pm}(\EC/\Q_{(n)})[p^{\infty}]$ is finite for all $n$ and that $\Zha(\EC/\Q_{\cyc})[p^\infty]$ is finite.
Then for $t\geq e_0$
\[
\text{cyclotomic part of } \gcd\left( \charac_{\Lambda}(\Sel^{+}(\EC/\Q_{\cyc})^\vee), \charac_{\Lambda}(\Sel^{-}(\EC/\Q_{\cyc})^\vee)\right) = \Bigg\langle x^{t} \prod_{\substack{n\geq 1\\e_n>1}} \Phi^{e_n - 1}_n(x) \Bigg\rangle.
\]
If $\Zha^{\pm}(\EC/\Q_{\cyc})[p^\infty]^{\Gamma}$ is finite, then $t=e_0$.
\end{lthm}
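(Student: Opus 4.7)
The plan is to piece together three ingredients already established in this paper: Theorem~\ref{thm B} for $\charac_{\Lambda}\cM^{\pm}(\EC/\Q_{\cyc})^\vee$, the defining short exact sequence
\[
0\to\cM^{\pm}(\EC/\Q_{\cyc})\to\Sel^{\pm}(\EC/\Q_{\cyc})\to\Zha^{\pm}(\EC/\Q_{\cyc})[p^\infty]\to 0,
\]
and the immediately preceding theorem, which furnishes the cyclicity of the cyclotomic part of $\Zha^{\pm}(\EC/\Q_{\cyc})[p^\infty]^\vee$ as well as the distinctness of the $a_j$'s in the $\pm$-analogue of Lee's decomposition. Pontryagin duality and the multiplicativity of characteristic ideals in short exact sequences of finitely generated torsion $\Lambda$-modules give
\[
\charac_{\Lambda}\Sel^{\pm}(\EC/\Q_{\cyc})^\vee=\charac_{\Lambda}\cM^{\pm}(\EC/\Q_{\cyc})^\vee\cdot\charac_{\Lambda}\Zha^{\pm}(\EC/\Q_{\cyc})[p^\infty]^\vee.
\]

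First I would apply Theorem~\ref{thm B} with $K=\Q$ (so $\sG$ is trivial and the conditions on $m$ and $p$ are vacuous). The additional input that $\Zha(\EC/\Q_{\cyc})[p^\infty]$ is finite, combined with the control-sequence argument already deployed in Theorem~\ref{thm A} and in \cite[Theorem~C]{Lei23}, forces $\theta_n=1$ whenever $e_n>0$: the jump in the $p$-adic image of the fine Mordell--Weil group matches the rank jump, collapsing the $\Phi_n$-multiplicity from $e_n$ to $e_n-1$ on whichever parity is dictated by Theorem~\ref{thm B}. Consequently
\[
\gcd\bigl(\charac_{\Lambda}\cM^{+}(\EC/\Q_{\cyc})^\vee,\charac_{\Lambda}\cM^{-}(\EC/\Q_{\cyc})^\vee\bigr)=\Bigl\langle x^{e_0}\prod_{\substack{n\geq1\\e_n>1}}\Phi_n(x)^{e_n-1}\Bigr\rangle,
\]
which is already the shape of the right-hand side of the assertion up to the power of $x$.

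Next I would show that the $\Zha^{\pm}$-contribution to the gcd reduces to a pure $x$-power. Since $\cM^{\pm}(\EC/\Q_{\cyc})^\vee$ is purely cyclotomic, the non-cyclotomic part of $\charac_{\Lambda}\Sel^{\pm}$ coincides with that of $\charac_{\Lambda}\Zha^{\pm}$. By the preceding theorem, the cyclotomic part of $\Zha^{\pm}(\EC/\Q_{\cyc})[p^\infty]^\vee$ is cyclic, of the form $\Lambda/\langle x^{s^{\pm}}F^{\pm}(x)\rangle$ where $F^{\pm}(x)$ is a product of distinct prime-power cyclotomic polynomials. Any common non-$x$ factor in $\gcd(\charac_{\Lambda}\Zha^{+},\charac_{\Lambda}\Zha^{-})$ would, after multiplying by $\charac_{\Lambda}\cM^{\pm}$, produce a common factor in $\gcd(\charac_{\Lambda}\Sel^{+},\charac_{\Lambda}\Sel^{-})$ beyond what the $\cM$-step already accounts for; this would contradict the distinctness of the $a_j$'s in the $\pm$-Lee decomposition. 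Hence the $\Zha^{\pm}$-contribution to the gcd is the pure power $x^{\min(s^{+},s^{-})}$, and setting $t:=e_0+\min(s^{+},s^{-})\geq e_0$ yields the asserted identity.

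For the refinement, the standard criterion that a finitely generated torsion $\Lambda$-module $M$ has $x\nmid\charac_{\Lambda}M$ if and only if the $\Gamma$-invariants of $M^\vee$ are finite, applied to $M=\Zha^{\pm}(\EC/\Q_{\cyc})[p^\infty]^\vee$, yields $s^{\pm}=0$ under the additional hypothesis, and therefore $t=e_0$. The main technical obstacle is the coprimality step: extracting from the mere cyclicity of the cyclotomic part of $\Zha^{\pm}(\EC/\Q_{\cyc})[p^\infty]^\vee$ together with the distinctness of the $a_j$'s the full conclusion that no non-$x$ irreducible factor is common to $\charac_{\Lambda}\Zha^{+}$ and $\charac_{\Lambda}\Zha^{-}$ demands a careful side-by-side comparison of the $\pm$-analogue of Lee's decomposition of $\Sel^{\pm}(\EC/\Q_{\cyc})^\vee$ with the explicit $\cM^{\pm}$ formula furnished by Theorem~\ref{thm B}.
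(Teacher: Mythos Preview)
Your overall strategy matches the paper's: use Theorem~\ref{thm B} for the $\cM^{\pm}$ contribution, the tautological short exact sequence, and the structural information on $\Zha^{\pm}$ from the preceding theorem. The refinement step ($t=e_0$ under finiteness of $\Zha^{\pm}(\EC/\Q_{\cyc})[p^\infty]^{\Gamma}$) is also handled correctly.

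However, the central step is genuinely flawed, in two ways.

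First, the quantity ``$\Zha^{\pm}$-contribution to the gcd'' is not well-defined: $\gcd$ does not distribute over products, so knowing $\gcd(\charac_\Lambda\cM^+,\charac_\Lambda\cM^-)$ and $\gcd(\charac_\Lambda\Zha^+,\charac_\Lambda\Zha^-)$ separately does \emph{not} determine $\gcd(\charac_\Lambda\Sel^+,\charac_\Lambda\Sel^-)$. Concretely, take $n\geq 1$ odd with $e_n>0$. Then $v_{\Phi_n}(\cM^+)=e_n-1$ and $v_{\Phi_n}(\cM^-)=e_n$. If it happened that $v_{\Phi_n}(\Zha^+)\geq 1$ and $v_{\Phi_n}(\Zha^-)=0$, then $\gcd(\charac_\Lambda\Zha^+,\charac_\Lambda\Zha^-)$ would have no $\Phi_n$-factor, yet $v_{\Phi_n}(\Sel^+)\geq e_n$ and $v_{\Phi_n}(\Sel^-)=e_n$, so the $\Phi_n$-part of the gcd would be $\Phi_n^{e_n}$, not $\Phi_n^{e_n-1}$. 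Thus even if your claim about $\gcd(\charac_\Lambda\Zha^+,\charac_\Lambda\Zha^-)$ were true, it would not suffice.

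Second, your argument that $\gcd(\charac_\Lambda\Zha^+,\charac_\Lambda\Zha^-)$ is a pure $x$-power invokes only the \emph{distinctness} of the $a_j$'s, but distinctness is a statement about the $a_j$'s appearing in $\Sel^+$ (or in $\Sel^-$) \emph{separately}; it says nothing about common factors between $\Zha^+$ and $\Zha^-$.

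What actually makes the argument work is the second clause of the preceding theorem, which you did not use: the $a_m$ appearing in $\Zha^+$ are all \emph{even}, while those in $\Zha^-$ are all \emph{odd or $0$}. This parity restriction gives directly that $v_{\Phi_n}(\Zha^+)=0$ for every odd $n\geq 1$ and $v_{\Phi_n}(\Zha^-)=0$ for every even $n\geq 1$. Hence for odd $n\geq 1$ one has $v_{\Phi_n}(\Sel^+)=v_{\Phi_n}(\cM^+)=e_n-\theta_n\leq e_n\leq v_{\Phi_n}(\Sel^-)$, so the $\Phi_n$-part of the gcd is exactly $\Phi_n^{e_n-\theta_n}$; symmetrically for even $n$. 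This is precisely the paper's argument: for $n\geq 1$ the $\Phi_n$-part of the gcd coincides with that of $\charac_\Lambda(\cM(\EC/\Q_{\cyc})^\vee)$, and only at $n=0$ can both $\Zha^+$ and $\Zha^-$ contribute, giving the unspecified $t\geq e_0$.
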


One interesting implication of Theorem~\ref{Theorem D} is that if we assume Kobayashi's plus or minus main conjecture and the finiteness of $\Sha(\EC/\Q_{(n)})[p^{\infty}]$ at each level $n$, then (\ref{Gr}) implies (\ref{KP}).
This can be seen by the main result of \cite{Lei-Sujatha-21}.
Moreover, notice that our conditions are different from \cite[Proposition~3.4(2)]{KP07} which is also a theorem of similar flavour.
The conditions of our theorem are also different from those of \cite[Remark~6.10]{Lei23} in the following sense.
We work directly with the structure of cyclotomic part of $\Sel^{\pm}(\EC/\Q_{\cyc})$ without going through $\cM^{\pm}(\EC/K_{\cyc})$.
Therefore, we do not have to assume the finiteness of $\Zha^{\pm}(\EC/\Q_{\cyc})[p^\infty]$.

\subsection*{Organization}
Including this introduction, this article has six section.
Section~\ref{sec: preliminary} is preliminary in nature where we discuss the basic notions and definitions which are used throughout this paper.
Section~\ref{sec: algebraic preliminary} is a section on algebraic preliminaries where we prove technical (algebraic) results that are useful in the remainder of the paper.
In Section~\ref{sec: Rank Growth Data and Fine Selmer Groups} we first derive arithmetic implications of the technical results proven in the previous section.
We then prove results on the $\Lambda$-structure of fine Selmer groups and also prove equivariant results.
Section~\ref{sec: pm MW group} focuses on the case when $p$ is a prime of supersingular reduction.
In particular, we study a generalization of the Kurihara--Pollack question upon base change and provide refinement of solutions towards the original problem (over $\Q$).
In Section~\ref{sec: Selmer group}, we use our earlier results to study the structure of the Selmer group over the cyclotomic $\Zp$-extension when the base field is $\Q$.

\subsection*{Outlook}
A possible future direction is to remove the hypotheses that we are forced to impose in the theorems that we prove.
Further understanding of $\Zha(\EC/K_{\cyc})[p^\infty]$ and $\Zha^{\pm}(\EC/K_{\cyc})[p^\infty]$ will be instrumental in formulating a generalized version of the problem posed by Greenberg (resp.~Kurihara--Pollack) for number fields.

\subsection*{Data Availability} There is no associated data with this manuscript.

\section*{Acknowledgements}
The authors acknowledge the support of their respective AMS--Simons early career travel grants.
\edit{During the revision stages of this article, DK was supported by the NSERC Discovery Grant RGPIN-2026-07384.}
We thank Francesc Castella, Antonio Lei, and Katharina M{\"u}ller for helpful discussions.
We thank Ben Forrás for his comments on an earlier version of the draft.
\edit{We are grateful to the referee for their careful and timely reading of our manuscript.}

\section{Notation and Basic Definitions}
\label{sec: preliminary}

\subsection{Iwasawa theory}

Fix a prime $p$.
We write $\Q_{\cyc}$ to denote the cyclotomic $\Zp$-extension of $\Q$.
This is the maximal totally real pro-$p$ subfield of $\bigcup_n \Q(\zeta_{p^n})$ where $\zeta_{p^n}$ denotes a primitive $p^n$-th root of unity.
In particular, there is a tower
\[
\Q = \Q_{(0)} \subset \Q_{(1)} \subset \ldots \subset \Q_{\cyc}
\]
such that each $\Q_{(n)}$ is the totally real subfield of $\Q(\zeta_{p^{n+1}})$ with $\Gal(\Q_{(n)}/\Q)\simeq \Z/p^n\Z$.
By infinite Galois theory,
\[
\Gamma = \Gal(\Q_{\cyc}/\Q) \simeq \varprojlim_n \Z/p^n\Z = \Zp.
\]
Now, let $K$ be any number field.
Then $K_{\cyc} = K \cdot \Q_{\cyc}$, i.e., the compositum of $K$ with $\Q_{\cyc}$.
By definition $\Gal(K_{\cyc}/K)\simeq \Zp$, as well.
As before, we write $K_{(n)}$ to denote the $n$-th layer of $K_{\cyc}$ and write $\Gamma_n = \Gal(K_{(n)}/K) \simeq \Z/p^n \Z$.

The Iwasawa algebra $\Lambda=\Lambda(\Gamma)$ is the completed group algebra $\Z_p\llbracket \Gamma \rrbracket :=\varprojlim_n \Z_p[\Gamma/\Gamma^{p^n}]$.
Fix a topological generator $\gamma$ of $\Gamma$; this gives an isomorphism of rings 
\begin{align*}
\Lambda &\xrightarrow{\sim} \Z_p\llbracket x\rrbracket \\
\gamma & \mapsto 1+x.
\end{align*}

Let $M$ be a finitely generated torsion $\Lambda$-module.
The \emph{structure theorem of $\Lambda$-modules} \cite[Theorem~13.12]{Was97} asserts that $M$ is pseudo-isomorphic to a finite direct sum of cyclic $\Lambda$-modules.
In other words, there is a homomorphism of $\Lambda$-modules
\[
M \longrightarrow \left(\bigoplus_{i=1}^s \Lambda/(p^{m_i})\right)\oplus \left(\bigoplus_{j=1}^t \Lambda/(f_j(x)) \right)
\]
with finite kernel and cokernel.
\edit{Moreover, this decomposition is uniquely determined by $M$; see \cite[Corollary~15.19]{Was97}.
We call
the right hand side of the above homomorphism the \textit{elementary module} of $M$ and denote it by $\mathscr{E}(M)$.}
Such a homomorphism is a \emph{pseudo-isomorphism}.
Here, $m_i>0$ and $f_j(x)$ is a distinguished polynomial (i.e.~a monic polynomial with non-leading coefficients divisible by $p$).
The characteristic ideal of $M$ is (up to a unit) generated by the characteristic element,
\[
f_{M}^{(p)}(x) := p^{\sum_{i} m_i} \prod_j f_j(x).
\]
For ease of notation, we often write $f(x) = f_M^{(p)}(x)$.
The $\mu$-invariant of $M$ is defined as the power of $p$ in $f_{M}^{(p)}(x)$.
More explicitly,
\[
\mu(M) = \mu_p(M):=\begin{cases}0 & \textrm{ if } s=0\\
\sum_{i=1}^s m_i & \textrm{ if } s>0.
\end{cases}
\]
The $\lambda$-invariant of $M$ is the degree of the characteristic element, i.e.
\[
\lambda(M) = \lambda_p(M) := \sum_{j=1}^t \deg f_j(x).
\]

\subsection{Selmer group and fine Selmer group of elliptic curves}
Fix an algebraic closure $\overline{\Q}$ of $\Q$.
Then an algebraic extension of $\Q$ is a subfield of this fixed algebraic closure, $\overline{\Q}$.
Throughout, $p$ denotes a fixed rational odd prime.
Let $\EC/\Q$ be an elliptic curve.
Fix a finite set $S$ of primes of $\Q$ containing $p$, the primes of bad reduction of $\EC$, and the unique archimedean prime.
Denote by $\Q_S$, the maximal algebraic extension of $\Q$ unramified outside $S$.
For every (possibly infinite) extension $L$ of $\Q$ contained in $\Q_S$, write $G_S\left({L}\right) = \Gal\left(\Q_S/{L}\right)$.
Write $S\left(L\right)$ for the set of primes of $L$ above $S$.
If $L$ is a finite extension of $\Q$ and $w$ is a place of $L$, we write $L_w$ for its completion at $w$; when $L/\Q$ is infinite, it is the union of completions of all finite sub-extensions of $L$.

\begin{definition}
Fix an elliptic curve $\EC/\Q$ with good reduction at $p$.
View $\EC[p^\infty]$ as a $\Zp$-module equipped with a continuous $G_S(\Q)$-action.
\begin{itemize}
 \item[\textup{(}i\textup{)}] For any finite extension $L/\Q$ set
\begin{align*}
K^1_v\left(\EC/L\right) &= \bigoplus_{w|v} H^1\left(L_w, \EC[p^\infty]\right),\\
J^1_v\left(\EC/L\right) &= \bigoplus_{w|v} H^1\left(L_w, \EC\right)[p^\infty],
\end{align*}
where the direct sum is taken over all primes $w$ of $L$ lying above a rational prime $v$.
\item[\textup{(}ii\textup{)}]For an infinite algebraic extension $\cL/\Q$, define $K_v^1\left(\EC/\cL\right)$ by taking the inductive limit of $K_v^1\left(\EC/L\right)$ over all finite extensions $L/\Q$ contained in $\cL$, and similarly for $J_v^1(\EC/\cL)$.
\item[\textup{(}iii\textup{)}]Let $L$ be an algebraic extension of $\Q$ that is contained inside $\Q_S$, define the \emph{$p^\infty$-fine Selmer group} of $\EC$ over $L$ as
\[
\Sel_0(\EC/L):=\ker\left(H^1(G_S(L),\EC[p^\infty])\longrightarrow\bigoplus_{v\in S(L)} K^1_v(\EC/L)\right).
\]
and the \emph{$p^\infty$-Selmer group} as
\[
\Sel(\EC/L):=\ker\left(H^1(G_S(L),\EC[p^\infty])\longrightarrow\bigoplus_{v\in S(L)} J^1_v(\EC/L)\right).
\]
\end{itemize}
\end{definition}
\noindent Since all primes are finitely decomposed in the cyclotomic $\Zp$-extension, we will henceforth simplify notation and write $\bigoplus_{v\in S(\Q_{\cyc})} H^1\left(\Q_{\cyc,v},\EC[p^\infty]\right)$ in place of $\bigoplus_{v\in S(\Q_{\cyc})} K^1_v(\EC/\Q_{\cyc}))$, and similarly for $\bigoplus_{v\in S(\Q_{\cyc})} J^1_v(\EC/\Q_{\cyc}))$.
Since $p$ is fixed, we drop it from the notation of the (fine) Selmer group.
We remind the reader that both the definitions are independent of $S$ as long as $S$ contains the primes above $p$, the primes of bad reduction of $\EC$, and the archimedean prime(s).

Let $L/\Q$ be an algebraic extension.
The Selmer group sits inside the following short exact sequence
\[
0 \longrightarrow \EC(L) \otimes \Qp/\Zp \longrightarrow \Sel(\EC/L) \longrightarrow \Sha(\EC/L)[p^\infty] \longrightarrow 0.
\]
The left-most object is called the \emph{Mordell--Weil group} of $\EC$ and the right-most object is called the \emph{Shafarevich--Tate group}.
Analogously, the fine Selmer group also sits inside a short sequence
\[
0 \longrightarrow \cM(L) \longrightarrow \Sel_0(\EC/L) \longrightarrow \Zha(\EC/L)[p^\infty] \longrightarrow 0.
\]
The left-most object referred to as the \emph{fine Mordell--Weil group}
\edit{is defined as follows}
\[
\edit{\cM(L) = \ker\left(\EC(L) \otimes \Q_p/\Z_p \longrightarrow \bigoplus_{v\mid p} \EC(L_v) \otimes \Q_p/\Z_p\right).}
\]
The right-most object \edit{which can be defined as the quotient of the fine Selmer group by the fine Mordell--Weil group} is called 
\emph{fine Shafarevich--Tate group}.
An equivalent definition of the fine Selmer group is the following
\begin{equation}
\label{alternative defn of FSG}
0 \longrightarrow \Sel_0(\EC/L) \longrightarrow \Sel(\EC/L) \longrightarrow \bigoplus_{w\mid p} \EC(L_w) \otimes \Qp/\Zp.
\end{equation}

\subsection{Iwasawa theory of elliptic curves}
Given a number field $K$, denote by $K_{(n)}$ the $n$-th layer in the $\Z_p$-extension.
Let $\EC/K$ be an elliptic curve, then (as previously explained) the Selmer group over $K_{\cyc}$ can be viewed as the direct limit of the Selmer groups over the number fields $K_{(n)}$ in $K_{\cyc}$, i.e.
\[
\Sel(\EC/K_{\cyc})=\varinjlim_n \Sel(\EC/K_{(n)}).
\]
Its Pontryagin dual $\Sel(\EC/K_{\cyc})^\vee$ is a finitely generated $\Lambda$-module.

\subsubsection{Good ordinary reduction}
\edit{If $K/\Q$ is an abelian extension and $\EC/K$ has good \emph{ordinary} reduction at all primes above $p$,}
it is known that $\Sel(\EC/K_{\cyc})$ is $\Lambda$-cotorsion; see \edit{\cite[Theorem~17.4(1)]{Kat04} and \cite[Theorem~1.5]{Gre98}}.
We can therefore attach Iwasawa invariants to $\Sel(\EC/K_{\cyc})^\vee$.
Note that when $K/\Q$ is not abelian, it is still expected that $\Sel(\EC/K_{\cyc})^\vee$ is $\Lambda$-torsion.

\subsubsection{Good supersingular reduction}
\label{prelim: good supersingular}
When \edit{$\EC/\Q$ has good supersingular reduction at $p$ (or at primes above $p$ when base changed)} 
the dual of the Selmer group is provably \emph{not} $\Lambda$-torsion.
To remedy this, we define signed Selmer groups à la S.~Kobayashi \cite{Kob03} and B.~D.~Kim \cite{BDKim13, byoung2014signed}.
For our purposes, we will further suppose that $a_p(\EC) =0$.

\smallskip

\emph{We further suppose that $p$ is unramified in $K$.}

\smallskip

For a fixed prime $p$ and each $n\geq 0$, set $\Kn = K_{(n)} \otimes \Qp$, which is a product of local fields and set $\K_{\cyc} = K_{\cyc} \otimes \Qp$.
We write $\EC(\Kn) = \bigoplus_{v\mid p} \EC(K_{(n),v})$ and similarly for $\EC(\mathbb{K}_{\cyc})$.

Following \cite{Kat21} we define the plus (and minus) norm groups as follows
\begin{align*}
\EC^+(\Kn) &:=
\left\{\mathsf{P}\in \EC(\Kn) \mid \tr_{n/m+1} (\mathsf{P})\in \EC(\K_{m}), \text{ for }0\leq m < n\text{ and }m \text{ even}\right\},\\
\EC^-(\Kn) &:=
\left\{\mathsf{P}\in \EC(\Kn) \mid \tr_{n/m+1} (\mathsf{P})\in \EC(\K_{m}),\text{ for }-1\leq m < n\text{ and }m 
\text{ odd}\right\},
\end{align*}
where $\tr_{n/m+1}:\EC(\Kn)\rightarrow \EC(\K_{m+1})$ is the trace map.
We refer the reader to \cite[Section~2]{BDKim13} for the etymology.
Set $\EC^{\pm}(\K_{\cyc}) = \bigcup_{n\geq 0} \EC^{\pm}(\Kn)$.

Following \cite{Lei23} we introduce the notion of \emph{plus/minus} Mordell--Weil groups over $\K_{\cyc}$.
Define
\[
\cM^{\pm}(\EC/\K_{\cyc}) := \bigcup_{n\geq 0} \EC^{\pm}(\Kn) \otimes \Qp/\Zp = \EC^{\pm}(\K_{\cyc}) \otimes \Qp/\Zp.
\]
Note that this definition makes sense since $\EC^{\pm}(\Kn)[p^\infty]$ is trivial for all $n\geq 0$.
We remind the reader of a notation in \cite{Kat21} that we will use.
For $F/\Q$ an algebraic extension and a continuous $\Gal(\overline{\Q}/F)$-module $X$, we write $H^i(F \otimes \Q_{p}, X) = \bigoplus_{w\mid p} H^i(F_w, X)$.

Since $\EC(\K_{\cyc})[p^\infty]$ is always trivial; see \cite[Proposition~3.6]{Kat21}, it now easily follows that the inflation-restriction exact sequence is
\[
H^1\left( \K_{\cyc}, \EC[p^\infty]\right)^{\Gamma_n} \simeq H^1\left( \Kn, \EC[p^\infty]\right).
\]
Similarly, we have 
\[
H^1\left( \K_{\cyc}, \EC[p^r]\right) \simeq H^1\left( \Kn, \EC[p^\infty]\right)[p^r] 
\]
induced by the short exact sequence
\[
0 \longrightarrow \EC[p^r] \longrightarrow \EC[p^\infty] \xrightarrow{\times p^r} \EC[p^\infty] \longrightarrow 0.
\]

Now define $\cM^{\pm}(\EC/\Kn) = \cM^{\pm}(\EC/\K_{\cyc})^{\Gamma_n}$.
For a fixed positive integer $k$, we will often work with 
\[
\cM_{p^k}^{\pm}(\EC/\Kn) := \cM^{\pm}(\EC/\Kn)[p^k].
\]
Finally, we define the $p^k$ and $p$-primary plus/minus Mordell--Weil groups of $\EC$ over $K_{(n)}$ as follows
\begin{align*}
\cM_{p^k}^{\pm}(\EC/K_{(n)}) &:= \ker\left(\EC(K_{(n)})/p^k \edit{\xrightarrow{\kappa}} \frac{H^1(\Kn, \EC[p^k])}{\cM_{p^k}^{\pm}(\EC/\Kn)} \right)\\
\cM^{\pm}(\EC/K_{(n)}) &:= \varinjlim_k \cM_{p^k}^{\pm}(\EC/K_{(n)}) = \ker\left(\EC(K_{(n)}) \otimes \Qp/\Zp \xrightarrow{\kappa} \frac{H^1(\Kn, \EC[p^\infty])}{\cM^{\pm}(\EC/\Kn)} \right)\\
& \hspace{1.27in} = \ker\left(\EC(K_{(n)}) \otimes \Qp/\Zp \longrightarrow \frac{H^1(\Kn, \EC[p^\infty])}{\cM^{\pm}(\EC/\K_{\cyc})} \right).
\end{align*}
\edit{In the above definitions, we write $\kappa$ to denote the Kummer map.}
For the last equality we make the following observation: $\image(\kappa)$ is fixed by $\Gamma_n$.
Hence, by definition
\[
\image(\kappa) \cap \cM^{\pm}(\EC/\K_{\cyc}) \subseteq \cM^{\pm}(\EC/\K_{\cyc})^{\Gamma_n} = \cM^{\pm}(\EC/\Kn).
\]

By taking direct limits over $n$, we get the appropriate definitions of the plus/minus Mordell--Weil groups of $\EC$ over $K_{\cyc}$.
In other words,
\[
\cM^{\pm}(\EC/K_{\cyc}) = \varinjlim_n \cM^{\pm}(\EC/K_{(n)}).
\]

\begin{remark}
We use the notation $\cM^{\pm}(\EC/F)$ where $F$ is a field for $\pm$ Mordell--Weil groups, as introduced in \cite{Lei23}.
We emphasize that this is not a variant of the \emph{fine} Mordell--Weil group.
\end{remark}

We can introduce the plus/minus Selmer groups over $K_{(n)}$ (and hence also $K_{\cyc}$):
\[
\Sel^{\pm}(\EC/K_{(n)}) := \ker\left( H^1(K_{(n)}, \EC[p^\infty]) \longrightarrow \frac{H^1(\Kn, \EC[p^\infty])}{\cM^{\pm}(\EC/\Kn)} \times \prod_{w\nmid p} H^1(K_{(n),w}, \EC[p^\infty])\right).
\]
The plus/minus Selmer groups over $K_{\cyc}$ \edit{are} defined by taking direct limits,
\[
\Sel^{\pm}(\EC/K_{\cyc}) := \varinjlim_n \Sel^{\pm}(\EC/K_{(n)}).
\]
These Pontryagin dual\edit{s} of these $\pm$-Selmer groups are always expected (and under reasonable hypotheses it is provable) to be $\Lambda$-torsion modules.

\begin{remark}
Sometimes we will be required to discuss $\Sel(\EC/K_{\cyc})$ for $p$ a prime of good ordinary reduction and $\Sel^{\pm}(\EC/K_{\cyc})$ for $p$ a prime of good supersingular reduction, simultaneously.
In such cases, we use the notation $\Sel^{\dagger}(\EC/K_{\cyc})$ with $\dagger\in \{\emptyset, \pm \}$; here, by $\Sel^{\emptyset}(\EC/K_{\cyc})$ we simply mean $\Sel(\EC/K_{\cyc})$ and $p$ is good ordinary.
\end{remark}

\subsection{Additional notation}
\begin{definition}
\label{additional notation}
Let $A$ be a $\Zp$-module.
\begin{enumerate}
\item[\textup{(}i\textup{)}] The \emph{Pontryagin dual} of $A$, denoted by $A^\vee$ is defined as
\[
A^\vee = \operatorname{Hom}(A, \Qp/\Zp)
\]
where $\operatorname{Hom}$ means the group of continuous homomorphisms.
\item[\textup{(}ii\textup{)}] The $\Zp$-dual of $A$, denoted by $A^D$ is defined as
\[
A^D = \operatorname{Hom}(A, \Zp).
\]
\item[\textup{(}iii\textup{)}] The \emph{Tate module} of $A$, denoted by $T_p( A)$, is defined via a projective limit where the connecting maps are given by $\times p$; i.e.,
\[
T_p(A) = \varprojlim_k A[p^k].
\]
\item[\textup{(}iv\textup{)}] The \emph{$p$-adic completion} of $A$, denoted by $A^*$, is defined via a projective limit where the connecting maps are given by projections; i.e.,
\[
A^* = \varprojlim_k A/p^kA.
\]
\item[\textup{(}v\textup{)}] The $\Qp$-linear versions of $T_p( A)$ and $A^*$ are defined as
\[
V_p( A) = T_p( A) \otimes_{\Zp} \Qp, \ \quad \ A^{\bullet} = A^* \otimes_{\Zp} \Qp.
\]
\end{enumerate}
\end{definition}

\section{Algebraic preliminaries}
\label{sec: algebraic preliminary}
Fix an abelian group $G$.
By the structure theorem (of finitely generated modules over principal ideal domains), we know that
\[
G \simeq \Z/p_1^{n_1}\Z \times \ldots \times \Z/p_r^{n_r}\Z \text{ where each }p_i \text{ is a prime and }n_i\in \Z_{>0}.
\]
For ease of notation, set $G^{(i)} = \Z/p_i^{n_i}\Z$ and write $\sigma_i$ to denote its generator.
Then,
\[
\Q[G] \simeq \bigotimes_{i=1}^r \Q[G^{(i)}]
\]
as a $G$-module with a natural $G$-action.

\begin{definition}
For any integer $n>0$ and variable $x$, define the formal polynomial
\[
\omega(x,n) = \frac{x^n -1}{x-1}.
\]
When $n$ is a prime, the formal polynomial is irreducible over $\Q$.
\end{definition}

We begin the discussion by considering one cyclic piece of $G$.
\edit{Note that} as a $G^{(i)}$-module we have
\[
\Q[G^{(i)}] \simeq \Q \oplus \bigoplus_{j = 1}^{n_i} \frac{\Q[G^{(i)}]}{\langle \omega(\sigma_i^{p_i^{j - 1}}, p_i)\rangle }.
\]
\edit{For a proof, see \cite[\S21, Exercise~1]{Curtis-Reiner-81}.} Here, we may view each component as an irreducible representation over $\Q$; we denote them as $V_{i,j}$ for $1\leq j \leq n_i$.
Indeed, this is because
\[
\frac{\Q[G^{(i)}]}{\langle \omega(\sigma_i^{p_i^{j - 1}}, p_i) \rangle} \simeq \frac{\Q[x]}{\langle x^{p_i^{n_i}}-1, \ \omega(x^{p_i^{j - 1}}, p_i) \rangle} \simeq \frac{\Q[x]}{\langle \omega(x^{p_i^{j - 1}}, p_i) \rangle} .
\]
For consistency of notation, set $\Q = V_{i,0}$ when viewed as an irreducible representation (over $\Q$).

In the general case, consider the following collection of vectors 
\[
\A(G) = \A := \{\vec{\boldsymbol{\alpha}} = (\alpha_1, \ldots, \alpha_r) \colon 0 \leq \alpha_i \leq n_i \text{ and } \alpha_i \in \Z\}
\]
\edit{and give partial ordering $\preccurlyeq$ as follows: for two elements $\vec{\boldsymbol{\beta}} = (\beta_1, \ldots, \beta_r)$ and $\vec{\boldsymbol{\alpha}} = (\alpha_1, \ldots, \alpha_r)$ in $\A$, we say  $\vec{\boldsymbol{\beta}} \preccurlyeq \vec{\boldsymbol{\alpha}}$ if $\beta_i \leq \alpha_i$ for all $1\leq i \leq r$.
}
Now for each $\val\in \A$, define the $G$-module
\[
W_{\val} := \bigotimes_{i=1}^r V_{i, \alpha_i} \text{ where } \val = (\alpha_i)_i.
\]

$\{W_{\val} \colon \val \in \A\}$ is the collection of  all irreducible representations of $G$ over $\Q$.
Thus, the irreducible representations can be parameterized by the tuples $\val$ and $\Q[G]$ is the direct sum of those $W_{\val}$.
The reason why we are required to find all of them is, we need to know how many of each of them appear in each $\EC(K_{(n)}) \otimes \Q$.
This pinpoints $\EC(K_{(n)}) \otimes \Q$ as a $G$-representation.

Since each $V_{i,\alpha_i}$ is irreducible (as explained previously), it follows that $W_{\val}$ is an irreducible $G$-representation over $\Q$ and
\[
\Q[G] \simeq \bigoplus_{\val\in \A} W_{\val}.
\]

We are sometimes required to work with the \emph{integral modules}.
We introduce some notation:
\[
\overline{V}_{i,j} = \frac{\Z[G^{(i)}]}{\langle \omega(\sigma_i^{p_i^{j - 1}}, p_i) \rangle} \text{ and } \overline{W}_{\val} = \bigotimes_{i=1}^r \overline{V}_{i, \alpha_i}.
\]
Note that
\[
V_{i,j} = \overline{V}_{i,j} \otimes \Q \text{ and } W_{\val} = \overline{W}_{\val}\otimes \Q.
\]

Now, suppose that the following hypothesis holds:
\vspace{0.2cm}
\begin{itemize}
\item[\mylabel{star}{\textup{(}$\star$\textup{)}}] Set $\edit{\exp}(G) = m$ and suppose that $p$ is a prime which does not split in $\Q(\zeta_m)/\Q$.
\end{itemize}
\vspace{0.2cm}
Recall that $p$ is ramified in $\Q(\zeta_m)$ precisely when $p\mid m$.
On the other hand for $p\nmid m$, if $\phi(m)$ is the smallest integer such that $p^{\phi(m)} \equiv 1 \pmod{m}$ then $p$ is non-split in $\Q(\zeta_m)$.
Now, suppose $m = p^r m'$ with $\gcd(p,m') =1$.
Then \ref{star} holds iff $\phi(m\edit{'})$ is the smallest integer satisfying $p^{\phi(m')} \equiv 1 \pmod{m'}$.
In particular, this means that $p$ must be a generator of $(\Z/m'\Z)^{\times}$; i.e., $m'$ can be either $2$, $4$, $\ell^r$ or $2\ell^r$ for a prime $\ell \neq p$ and $r\geq 0$.

\begin{lemma}
If \ref{star} holds, the formal polynomial $\omega(x^{p_i^{j-1}}, p_i)$ is irreducible in $\Qp[x]$.
\end{lemma}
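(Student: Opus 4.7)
The plan is to recognize $\omega(x^{p_i^{j-1}}, p_i)$ as a cyclotomic polynomial and then reduce irreducibility over $\Qp$ to the non-splitting behaviour of $p$ in a cyclotomic field.

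First I would note the identity $\omega(x^{p_i^{j-1}}, p_i) = \Phi_{p_i}(x^{p_i^{j-1}}) = \Phi_{p_i^j}(x)$, which is standard for the $p_i^j$-th cyclotomic polynomial. So the question becomes: is $\Phi_{p_i^j}(x)$ irreducible in $\Qp[x]$?

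Next, I would invoke the classical dictionary that the number of irreducible factors of $\Phi_n(x)$ in $\Qp[x]$ equals the number of primes of $\Q(\zeta_n)$ lying above $p$. Hence $\Phi_n(x)$ is irreducible in $\Qp[x]$ if and only if $p$ is non-split (i.e., has a unique prime above it) in $\Q(\zeta_n)/\Q$. The next ingredient is that non-splitting is preserved under subfields: if $L/\Q$ has a unique prime above $p$ and $\Q \subseteq K \subseteq L$, then $K$ has a unique prime above $p$ as well, since every prime of $K$ above $p$ has at least one prime of $L$ lying over it.

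Now I would apply this to the case at hand. Since $m = \#\operatorname{Ann}(G)$ is the exponent of $G = \prod_i \Z/p_i^{n_i}\Z$, we have $m = \operatorname{lcm}_i(p_i^{n_i})$, and in particular $p_i^j \mid m$ for all $1 \leq j \leq n_i$. Therefore $\Q(\zeta_{p_i^j}) \subseteq \Q(\zeta_m)$. Hypothesis \ref{star} says that $p$ is non-split in $\Q(\zeta_m)/\Q$, and by the subfield principle above, $p$ is non-split in $\Q(\zeta_{p_i^j})/\Q$. Thus $\Phi_{p_i^j}(x)$ is irreducible in $\Qp[x]$, which is exactly the claim.

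There is no serious obstacle; the only mild point to be careful about is the case $p_i = p$, where $p$ is totally ramified (rather than inert) in $\Q(\zeta_{p^j})/\Q$, but since total ramification also produces a unique prime above $p$, the argument goes through uniformly. Concretely, the non-splitting/subfield argument handles both the ramified and unramified cases at once, so no separate treatment is needed.
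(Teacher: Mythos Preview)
Your proof is correct and follows essentially the same approach as the paper: both reduce irreducibility of $\omega(x^{p_i^{j-1}},p_i)=\Phi_{p_i^j}(x)$ over $\Qp$ to the assertion that $p$ has a unique prime above it in $\Q(\zeta_{p_i^j})$, and both obtain this from hypothesis \ref{star}. The paper phrases the criterion via the degree equality $[\Q(\zeta_{\ell^j}):\Q]=[\Qp(\zeta_{\ell^j}):\Qp]$ and simply asserts that \ref{star} ensures non-splitting in $\Q(\zeta_{\ell^j})$, whereas you make the passage from $\Q(\zeta_m)$ to the subfield $\Q(\zeta_{p_i^j})$ explicit; this added detail is helpful but does not constitute a different route.
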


\begin{proof}
Write $\ell$ to denote any prime.
Note that the formal polynomial $\omega(x^{\ell^{j-1}}, \ell)$ is irreducible in $\Qp[x]$ precisely when it is the minimal polynomial of $\zeta_{\ell^{j}}$ in $\Qp[x]$.
Equivalently, we require that 
\[
\deg(\omega(x^{\ell^{j-1}}, \ell)) = [\Q(\zeta_{\ell^j}): \Q] =[\Qp(\zeta_{\ell^j}): \Qp].
\]
The second equality holds if and only if $p$ is non-split in $\Q(\zeta_{\ell^j})$ and this is ensured by \ref{star}.
\end{proof}

Thus, $V_{i,j} \otimes \Qp$ is an irreducible $G^{(i)}$-representation over $\Qp$; hence $W_{\val} \otimes \Qp$ is an irreducible $G$-representation over $\Q_{\edit{p}}$.

For each $\val = (\alpha_i)_i \in \A$, define an $\val$-subgroup of $G$ as follows
\[
G_{\val} := {G^{(1)}}^{p_1^{\alpha_1}} \times \ldots \times {G^{(r)}}^{p_r^{\alpha_r}}.
\]
When $\val = (n_i)_i$, i.e., $\val$ is \emph{maximal} then $G_{\val} = \{1\}$.
Whereas, when $\val = (0)_i$ then $G_{\val} = G$.

Upon fixing $\val_0 \in \A$, we obtain a (unique) subset
\[
\{W_{\val} \colon \val \preccurlyeq \val_0\} \subseteq \{W_{\val} \colon \val \in \A\}.
\]
This subset consists of $G$-modules which are fixed by $G_{\val_0}$.

In the next lemma we show that $W_{\veb}$ is the unique irreducible representation of $G$ over $\Q$, which is fixed by  precisely by the groups $G_{\val}$ for $\veb \preccurlyeq \val$.

\begin{lemma}
\label{lemma 1 for Prop 2 of note 1}
Suppose that $\val, \veb\in \A$.
Then
\[
W_{\veb}^{G_{\val}} = \begin{cases}
W_{\veb} & \text{ if } \veb \preccurlyeq \val\\
0 & \text{ otherwise}.
\end{cases}
\]
\end{lemma}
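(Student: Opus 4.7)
The plan is to exploit the fact that $W_{\veb}$ is irreducible as a $\Q[G]$-module and reduce the assertion to a tensor-factor-by-tensor-factor calculation on the cyclic pieces $V_{i,\beta_i}$. First I would observe that since $G$ is abelian, the subgroup $G_{\val}$ is normal in $G$, so the fixed subspace $W_{\veb}^{G_{\val}}$ is automatically a $\Q[G]$-submodule of $W_{\veb}$. By the irreducibility of $W_{\veb}$ (established just above the statement from the irreducibility of each $V_{i,\alpha_i}$), this forces $W_{\veb}^{G_{\val}}$ to be either $0$ or $W_{\veb}$. So the whole assertion reduces to deciding for which pairs $(\val,\veb)$ the group $G_{\val}$ acts trivially on $W_{\veb}$.

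Next, since the action of $G$ on the tensor product $W_{\veb} = \bigotimes_i V_{i,\beta_i}$ is the tensor product action and $G_{\val} = \prod_i (G^{(i)})^{p_i^{\alpha_i}}$, triviality of the $G_{\val}$-action on $W_{\veb}$ is equivalent to triviality of the generator $\sigma_i^{p_i^{\alpha_i}}$ on each factor $V_{i,\beta_i}$. Here I use the identification
\[
V_{i,\beta_i} \simeq \frac{\Q[x]}{\langle \omega(x^{p_i^{\beta_i-1}}, p_i)\rangle}
\]
for $\beta_i \geq 1$, so $\sigma_i$ acts on $V_{i,\beta_i}$ as multiplication by a primitive $p_i^{\beta_i}$-th root of unity (and trivially when $\beta_i = 0$, in which case $V_{i,0} = \Q$).

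It follows that $\sigma_i^{p_i^{\alpha_i}}$ acts trivially on $V_{i,\beta_i}$ precisely when $p_i^{\beta_i} \mid p_i^{\alpha_i}$, i.e.\ when $\beta_i \leq \alpha_i$. Ranging over all $i$, triviality of the $G_{\val}$-action on $W_{\veb}$ is equivalent to $\veb \preccurlyeq \val$. Combining with the dichotomy from irreducibility yields the stated formula. No substantive obstacle is anticipated; the only moving part is correctly pinning down the eigenvalue of $\sigma_i$ on $V_{i,\beta_i}$, which follows immediately from the quotient presentation of $V_{i,\beta_i}$.
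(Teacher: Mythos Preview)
Your argument is correct and a bit more conceptual than the paper's. For the case $\veb \preccurlyeq \val$ the two proofs coincide: both check that each generator $\sigma_i^{p_i^{\alpha_i}}$ acts as $x^{p_i^{\alpha_i}}\equiv 1$ on $V_{i,\beta_i}$ when $\beta_i\le\alpha_i$. For the case $\veb\not\preccurlyeq\val$, however, the routes diverge. You invoke the irreducibility of $W_{\veb}$ (asserted just before the lemma) together with normality of $G_{\val}$ to get an all-or-nothing dichotomy, and then just observe that the action is nontrivial. The paper instead argues directly, without using irreducibility of the tensor product: choosing $j$ with $\beta_j>\alpha_j$, it forms the norm element $\mathsf{N}(h)$ for $h=(1,\ldots,\sigma_j^{p_j^{\alpha_j}},\ldots,1)\in G_{\val}$, shows that $\mathsf{N}(h)$ kills all of $W_{\veb}$ (via the divisibility $\omega(x^{p_j^{\beta_j-1}},p_j)\mid (x^{p_j^{n_j}}-1)/(x^{p_j^{\alpha_j}}-1)$), while on any $G_{\val}$-fixed vector $\mathsf{N}(h)$ acts as the nonzero scalar $p_j^{n_j-\alpha_j}$; hence the fixed space is zero. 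Your approach is shorter and structural but leans on the global irreducibility claim for $W_{\veb}$; the paper's norm-element computation only uses that each factor $V_{j,\beta_j}$ is a field, so it is self-contained at the level of the individual cyclic pieces. One small wording point: saying ``$\sigma_i$ acts as multiplication by a primitive $p_i^{\beta_i}$-th root of unity'' is informal over $\Q$; what you actually use is that $\sigma_i$ acts as multiplication by the class of $x$ in $\Q[x]/\langle\omega(x^{p_i^{\beta_i-1}},p_i)\rangle$, an automorphism of exact order $p_i^{\beta_i}$, which is enough for your divisibility criterion $\beta_i\le\alpha_i$.
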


\begin{proof}
Consider the element
\[
\boldsymbol{w} = w_1 \otimes \ldots \otimes w_r \in W_{\veb},
\]
where each $w_i \in V_{i, \beta_i}$.
Also, let 
\[
g = (g_1 , \ldots, g_r)\in G_{\val}
\]
where each $g_i \in G^{{(i)}^{p_i^{\alpha_i}}}$.
Hence, each $g_i = \sigma_i^{k_{\edit{i}}p_i^{\alpha_i}}$ for some positive integer $k_{\edit{i}}$.
Via the isomorphism
\[
V_{i,\beta_i} \simeq \frac{\Q[x]}{\langle \omega(x^{p_i^{\beta_i - 1}}, p_i) \rangle}, 
\]
the action of $\sigma_i$ on $V_{i,\beta_i}$ corresponds to multiplication by $x$.
Thus, $\sigma_i^{k_{\edit{i}}p_i^{\alpha_i}}$ corresponds to $\left(x^{{p_i}^{\alpha_i}}\right)^{k_{\edit{i}}}$.

By definition, if $\veb\preccurlyeq \val$ then $\beta_i \leq \alpha_i$ for each $i$.
Thus
\[
\omega(x^{p_i^{\beta_i - 1}}, p_i) \mid x^{p_i^{\alpha_i}} - 1
\]
and
\[
\left( x^{p_i^{\alpha_i}} \right)^{k_{\edit{i}} }\equiv 1 \pmod{\omega(x^{p_i^{\beta_i - 1}}, p_i)}
\]
This means $g_i \cdot w_i = w_i$ for all $i$ and $g \cdot \boldsymbol{w} = \boldsymbol{w}$ as desired.

Next suppose that $\veb\not\preccurlyeq \val$ and that $\boldsymbol{w}\in W_{\veb}^{G_{\val}}$.
It is clear from the definition of partial ordering that there exists some $j$ such that $\beta_j > \alpha_j$.
Consider the element
\[
h = (1, 1, \ldots, 1, \sigma_j^{p_j^{\alpha_j}}, 1, \ldots, 1)\in G_{\val}.
\]
Note that $\ord(h) = p_j^{n_j - \alpha_j}$.
Write $\mathsf{N}(h)$ to denote the norm element, i.e.,
\[
\mathsf{N}(h) = 1 + h + h^2 + \ldots + h^{p_j^{n_j - \alpha_j}-1} \in \Q[G].
\]
Then, 
\begin{equation}
\label{eqn lemma 2.2}
\mathsf{N}(h)\cdot \boldsymbol{w} = p_j^{n_j - \alpha_j}\boldsymbol{w} \edit{.}
\end{equation}
On the other hand if we consider \edit{an arbitrary} element $\boldsymbol{w'}\in W_{\veb}$ then
\[
\mathsf{N}(h)\cdot \boldsymbol{w'} = w'_1 \otimes \ldots \otimes \sum_{k_{\edit{j}}=0}^{p_j^{n_j - \alpha_j}-1} \sigma_j^{k_{\edit{j}} p_j^{\alpha_j}}\cdot w'_j\otimes \ldots \otimes w'_r ,
\]
But, $\displaystyle\sum_{k_{\edit{j}}=0}^{p_j^{n_j - \alpha_j}-1} \sigma_j^{k_{\edit{j}} p_j^{\alpha_j}}$ acts on $V_{j,\beta_j} \simeq \frac{\Q[x]}{\langle \omega(x^{p_j^{\beta_j - 1}}, p_j) \rangle}$ as
\[
1 + x^{p_j^{\alpha_j}} + x^{2p_j^{\alpha_j}} + \ldots + x^{p_j^{n_j} - p_j^{\alpha_j}} = \frac{x^{p_j^{n_j}}-1}{x^{p_j^{\alpha_j}}-1}.
\]
But, $n_j \geq \beta_j > \alpha_j$, so $\omega(x^{p_j^{\beta_j - 1}}, p_j)$ divides $\frac{x^{p_j^{n_j}}-1}{x^{p_j^{\alpha_j}}-1}$.
Thus, $\mathsf{N}(h)\cdot \boldsymbol{w'}=0$.
By \eqref{eqn lemma 2.2}, it follows that $\mathsf{N}(h)\cdot \boldsymbol{w}\edit{=0}$ and hence $\boldsymbol{w}=0$.
\end{proof}

\section{Rank Growth Data and Fine Selmer Groups}
\label{sec: Rank Growth Data and Fine Selmer Groups}

\subsection{Arithmetic application of algebraic results}
Fix an elliptic curve $\EC/\Q$ with good reduction at a fixed odd prime $p$.
Let $K$ be an abelian extension over $\Q$ with $\Gal(K/\Q)\simeq \sG$ which is disjoint from $\Q_{\cyc}$.
Recall that each $n$-th layer of the cyclotomic $\Zp$-extension of $K$ is denoted by $K_{(n)}$.
For each $n\geq 0$, set $\Kn = K_{(n)} \otimes \Qp$, which is a product of local fields.
In fact, $K_{(n)}$ is Galois over $\Q$ and we set
\[
\mathcal{G}_n :=\Gal(K_{(n)}/\Q) = \sG \times \Gamma_n.
\]
In the limit $K_{\cyc}/\Q$ is also a Galois extension, and we denote this Galois group by $\mathcal{G} = \sG \times \Gamma$.

\begin{lemma}
\label{Lemma 1 note 1}
Fix a prime $p$ of good reduction of an elliptic curve $\EC/\Q$.
Suppose that $p\nmid a_p$, i.e., $p$ is a prime of good \emph{ordinary} reduction.
As $\cG_n$-modules, the following isomorphism is true
\[
\EC(\Kn)^{\bullet} \simeq \Qp[\cG_n].
\]
\end{lemma}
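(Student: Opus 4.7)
The plan is to produce a chain of $\cG_n$-equivariant isomorphisms
\[
\EC(\Kn)^{\bullet} \xrightarrow{(1)} \hat{\EC}(\fm_{\Kn})^{\bullet} \xrightarrow{(2)} \Kn \xrightarrow{(3)} \Qp[\cG_n],
\]
built, respectively, from the formal group filtration, the formal logarithm, and the classical normal basis theorem. Both sides have $\Qp$-dimension $[K_{(n)}:\Q] = |\cG_n|$, so the content is that $\EC(\Kn)^{\bullet}$ realizes the regular representation of $\cG_n$ over $\Qp$.

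For step (1), I would decompose $\EC(\Kn) = \bigoplus_{v\mid p} \EC(K_{(n),v})$ as a $\cG_n$-module, with $\cG_n$ permuting the places $v$ above $p$ and acting on each local factor in the standard way. The reduction exact sequence
\[
0 \longrightarrow \hat{\EC}(\fm_{K_{(n),v}}) \longrightarrow \EC(K_{(n),v}) \longrightarrow \tilde{\EC}(k_v) \longrightarrow 0
\]
has finite cokernel, so applying $(-)^{\bullet}=\bigl(\varprojlim_k(-/p^k)\bigr)\otimes_{\Zp}\Qp$ kills $\tilde{\EC}(k_v)$ and yields the $\cG_n$-equivariant isomorphism.

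For step (2), the formal logarithm $\log_{\hat\EC}(T)\in\Qp\llbracket T\rrbracket$ is a power series with $\Qp$-coefficients (on which $\cG_n$ acts trivially) and, applied place-by-place, defines a $\Zp$-linear map $\hat{\EC}(\fm_{K_{(n),v}})\to K_{(n),v}$ with finite kernel and open image, which becomes an isomorphism after $(-)^{\bullet}$. Summing over $v$ supplies step (2), provided the resulting global map is $\cG_n$-equivariant. This is the one delicate point and the place I expect to be the main obstacle: a $\sigma\in \cG_n$ can permute the places above $p$, but functoriality of formal logarithms — equivalently, the fact that $\hat\EC$ is defined over $\Zp$, so the log power series is preserved under $\sigma$ — forces $\sigma\circ \log_{\hat\EC}=\log_{\hat\EC}\circ \sigma$. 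The direct sum of local logs therefore intertwines the $\cG_n$-action on $\hat{\EC}(\fm_{\Kn})$ with the standard one on $\Kn = K_{(n)}\otimes\Qp$.

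Step (3) is the classical normal basis theorem: $K_{(n)}$ is a free $\Q[\cG_n]$-module of rank one, and tensoring with $\Qp$ over $\Q$ gives $\Kn\simeq \Qp[\cG_n]$ as $\Qp[\cG_n]$-modules. Chaining (1)--(3) then yields the claim. The ordinary hypothesis, although stated, is not strictly needed for this particular lemma — the formal log argument works identically at supersingular primes — it is imposed for consistency with the ordinary Iwasawa-theoretic framework used in subsequent sections.
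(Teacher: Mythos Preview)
Your proposal is correct and follows essentially the same route as the paper: the paper's proof invokes the formal-group filtration $0\to \hat{\EC}(\fm_{n,\fp})\to \EC(K_{(n),\fp})\to (\text{finite})\to 0$ and then tensors with $\Qp$, citing \cite[Proposition~VII.6.3]{Sil86}, which is precisely your steps (1)--(2) with the formal logarithm and step (3) left implicit. Your treatment is simply a more detailed unpacking of the same argument, including the $\cG_n$-equivariance check and the normal basis identification $\Kn\simeq\Qp[\cG_n]$, and your observation that the ordinary hypothesis is not actually used here is accurate.
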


We note that when we write $\EC(\Kn)$ we mean $\oplus_{\fp\mid p} \EC(K_{(n),\fp})$

\begin{proof}
By standard results on the structure of elliptic curves over local fields, we have
\[
0 \longrightarrow \hat{\EC}(\fm_{n,\fp}) \longrightarrow \EC(K_{(n),\fp}) \longrightarrow \text{ finite group } \longrightarrow 0,
\]
where $\hat{\EC}$ denotes the formal group of $\EC$ at $\fp$ and $\fm_{n,\fp}$ is the maximal ideal of the ring of integers of $K_{(n),\fp}$.
Upon tensoring the above short exact sequence by $\Qp$ and using \cite[Proposition~VII.6.3]{Sil86} yields the result.
\end{proof}

\begin{proposition}
\label{Prop 2 note 1}
Let $L/F$ be a Galois extension with $\Gal(L/F)=G$ and let $\EC/F$ be an elliptic curve.
\edit{Then there exist non-negative integers $e_{\val}$
for $\val\in \mathbb{A}$ such that a}s $G$-modules, the following isomorphism is true
\[
\EC(L) \otimes \Q \simeq \bigoplus_{\val\in \A} W_{\val}^{e_{\val}},
\]
where $e_{\vec{0}} = \rk_{\Z} \EC(F)$ and $\rk_{\Z} \EC(L^{G_{\val}}) = \sum_{\vec{\boldsymbol{\beta}} \preccurlyeq \val} e_{\vec{\boldsymbol{\beta}}}\dim W_{\vec{\boldsymbol{\beta}}}$.
\end{proposition}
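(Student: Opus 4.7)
The plan is to decompose $\EC(L) \otimes \Q$ into the irreducible $\Q$-representations of $G$ catalogued in Section~\ref{sec: algebraic preliminary}, and then pin down the multiplicities by successively taking $G_{\val}$-invariants and invoking Galois descent.

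First I would observe that $\EC(L) \otimes \Q$ is a finite-dimensional $\Q[G]$-module: since $G = \prod_i \Z/p_i^{n_i}\Z$ is finite, $L/F$ is a finite Galois extension, so $\EC(L)$ is finitely generated by Mordell--Weil. Since $\{W_{\val} : \val \in \A\}$ exhausts the irreducible $\Q$-representations of $G$, Maschke's theorem gives a unique decomposition
\[
\EC(L) \otimes \Q \simeq \bigoplus_{\val \in \A} W_{\val}^{e_{\val}}
\]
for some multiplicities $e_{\val} \geq 0$. This already delivers the first assertion of the proposition; it remains to identify the $e_{\val}$.

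Next I would take $G_{\val}$-fixed vectors on both sides. By Lemma~\ref{lemma 1 for Prop 2 of note 1}, $W_{\veb}^{G_{\val}} = W_{\veb}$ when $\veb \preccurlyeq \val$ and vanishes otherwise, so the right-hand side collapses to $\bigoplus_{\veb \preccurlyeq \val} W_{\veb}^{e_{\veb}}$. For the left-hand side, since $G_{\val}$ is a finite group acting on a $\Q$-vector space, averaging over $G_{\val}$ produces a projector onto the invariants; in particular $(-)^{G_{\val}}$ commutes with $- \otimes_{\Z} \Q$. Combined with the Galois descent identification $\EC(L)^{G_{\val}} = \EC(L^{G_{\val}})$, this yields
\[
\EC(L^{G_{\val}}) \otimes \Q \simeq \bigoplus_{\veb \preccurlyeq \val} W_{\veb}^{e_{\veb}}.
\]
Equating $\Q$-dimensions on each side gives the rank formula $\rk_{\Z} \EC(L^{G_{\val}}) = \sum_{\veb \preccurlyeq \val} e_{\veb} \dim W_{\veb}$.

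Finally, specializing to $\val = \vec{0}$ one has $G_{\vec{0}} = G$ (so $L^{G_{\vec{0}}} = F$) and $W_{\vec{0}} = \Q$ has dimension one, which forces $e_{\vec{0}} = \rk_{\Z} \EC(F)$. I do not anticipate a serious obstacle; the one step that deserves a moment of care is the commutation of $G_{\val}$-invariants with $- \otimes_{\Z} \Q$, which is standard in characteristic zero for finite groups, and the input from Lemma~\ref{lemma 1 for Prop 2 of note 1} does all of the representation-theoretic heavy lifting.
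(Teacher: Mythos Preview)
Your proposal is correct and follows essentially the same route as the paper: decompose $\EC(L)\otimes\Q$ into the irreducible $\Q[G]$-modules $W_{\val}$, observe that taking $G_{\val}$-invariants commutes with $-\otimes_{\Z}\Q$ (the paper phrases this as the bijectivity of the natural map $(\EC(L)\otimes\Q)^{G_{\val}}\to \EC(L)^{G_{\val}}\otimes\Q$), and then invoke Lemma~\ref{lemma 1 for Prop 2 of note 1} together with $\EC(L)^{G_{\val}}=\EC(L^{G_{\val}})$ to read off the rank formula. Your write-up is simply a more explicit version of the paper's terse argument.
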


\begin{proof}
This result is obtained by studying the $G$-action on the Mordell--Weil group $\EC(L)$.
More precisely, observe that the natural map
\[
\left( \EC(L) \otimes \Q\right)^{G_{\val}} \longrightarrow \EC(L)^{G_{\val}} \otimes \Q
\]
is bijective.
Thus,
\[
\dim_{\Q}\left( \EC(L) \otimes \Q\right)^{G_{\val}} = \rk_{\Z} \EC(L^{G_{\val}}).
\]
The result now follows from Lemma~\ref{lemma 1 for Prop 2 of note 1}.
\end{proof}

\begin{definition}
The \emph{$p^n$-cyclotomic polynomial} is defined as
\begin{align*}
\Phi_n(x) := 
\begin{cases}
\frac{(1+x)^{p^n} - 1}{(1+x)^{p^{n-1}} - 1} & \text{ if } n\geq 1\\
x & \text{ if } n=0.
\end{cases}
\end{align*}    
\end{definition}

With this notation, we can identify
\[
\Zp[\Gamma_n] \simeq \bigoplus_{k=0}^n \frac{\Zp[x]}{\langle \Phi_k(x)\rangle}.
\]
Thus,
\[
\Qp[\Gal(K_{(n)}/\Q)] = \Qp[\cG_n] \simeq \Qp[\sG \times \Gamma_n] \simeq \Qp[\sG]\otimes \left(\bigoplus_{k=0}^n\frac{\Qp[x]}{\langle \Phi_k(x)\rangle}\right).
\]

In view of Proposition~\ref{Prop 2 note 1} and the above discussion, we note that for any positive integer $k$, we can associate \edit{with $\EC$} a quantity $e_{\val,k}$ which is the number of $W_{\val} \otimes \frac{\Q[x]}{\langle \Phi_k(x)\rangle}$ components that appear in $\EC(K_{(k)}) \otimes \Q$.
We make this more precise through the following proposition.

\begin{proposition}
\label{main result from note clarifying remark}
For a fixed non\edit{-}negative integer $n$, consider the following isomorphism of $\mathcal{G}_n$-modules
\begin{equation}
\label{eq: isomorphism to answer ref}
\EC(K_{(n)}) \otimes \Q \simeq \bigoplus_{\val\in \A} \bigoplus_{k=0}^n \left( W_{\val} \otimes \frac{\Q[x]}{\langle \Phi_k(x)\rangle} \right)^{e_{\val,k}},
\end{equation}
where $e_{\val,k}$ is a non-negative integer.
Set $e_{\val,n}= e_{\val}$\edit{.} 
Then for each $\val\in \A(\sG)$, the integer $e_{\val}$ is given by the following inductive formula

\[
\frac{1}{\dim W_{\val}}\left(\frac{1}{\phi(p^n)}\left( \rk_{\Z}\EC(K_{(n)}^{\sG_{\val}}) - \rk_{\Z}\EC(K_{(n-1)}^{\sG_{\val}}) \right) - \sum_{\vec{\boldsymbol{\beta}} \prec \val} e_{\vec{\boldsymbol{\beta}}}\dim W_{\vec{\boldsymbol{\beta}}} \right)= e_{\val}
\]
where $\EC(K_{(-1)})$ is the trivial group.
\end{proposition}

\begin{proof}
Let $L= K_{(n)}$ and $F=\Q$ in Proposition~\ref{Prop 2 note 1} and $G_{\val} = \sG_{\val} \times \{1\} \subset \sG \times \Gamma_n$.
Then
\[
\rk_{\Z} \EC(K_{(n)}^{G_{\val}}) = \rk_{\Z} \EC(K_{(n)}^{\sG_{\val} \times \{1\}}) = \sum_{\substack{\veb \preccurlyeq \val\\ 0 \leq k \leq n}} e_{\veb, k} \dim \left( W_{\veb} \otimes \frac{\Q[x]}{\langle \Phi_k(x)\rangle}\right).
\]
Now, suppose we let $L= K_{(n)}$ in Proposition~\ref{Prop 2 note 1} and $G_{\val} = \sG_{\val} \times \Gamma_n^{p^{n-1}} \subset \sG \times \Gamma_n$.
Then
\[
\rk_{\Z} \EC(K_{(n-1)}^{\sG_{\val}}) = \rk_{\Z} \EC(K_{(n)}^{\sG_{\val} \times \Gamma_n^{p^{n-1}}}) = \sum_{\substack{\veb \preccurlyeq \val\\ 0 \leq k \leq n-1}} e_{\veb, k} \dim \left( W_{\veb} \otimes \frac{\Q[x]}{\langle \Phi_k(x)\rangle}\right).
\]
It now follows easily that
\begin{align*}
\rk_{\Z} \EC(K_{(n)}^{\sG_{\val}}) - \rk_{\Z} \EC(K_{(n-1)}^{\sG_{\val}}) & = \sum_{\veb \preccurlyeq \val} e_{\veb, n} \dim \left( W_{\veb} \otimes \frac{\Q[x]}{\langle \Phi_n(x)\rangle}\right)\\
& = \sum_{\veb \preccurlyeq \val} e_{\veb, n} \dim W_{\veb} \edit{\cdot} \left(p^{n-1}(p-1)\right)\\
& = \phi(p^n) \left( e_{\val}\dim(W_{\val}) + \sum_{\veb \prec \val} e_{\veb} \dim(W_{\veb})\right).
\end{align*}
The result is now immediate.
\end{proof}

When $K=\Q$ and $\cG_n = \Gamma_n \simeq \Z/p^n\Z$, we note that  $\val = \vec{0}$.
It is straight-forward to note that $W_{\vec{0}}$ is trivial.
This essentially recovers the calculations done in \cite[p.~14 just before Lemma~4.2]{Lei23}.

\begin{definition}
An $\val\in \A$ is called \emph{$n$-positive }if $e_{\val,n} = e_{\val}(\EC/K_{(n)})>0$.
\end{definition}

We make a quick observation.
Suppose that $n$ is a fixed positive integer and $\val = (n_i)_i \in \A$ is maximal.
Then, in view of Proposition~\ref{main result from note clarifying remark} we have that
\[
\frac{1}{p^{n-1}(p-1)} \left( \rk_{\Z}\EC(K_{(n)}) - \rk_{\Z}\EC(K_{(n-1)}) \right) = \sum_{\substack{\val\in \A \\ n-\text{positive}} }e_{\val,n}\dim W_{\val}.
\]

\begin{lemma}
\label{lemma 3 note 1}
Fix a positive integer $n$ and suppose that \ref{star} holds for the group $G = \sG = \Gal(K/\Q)$.
Consider the natural map
\[
\eta_n : \EC(K_{(n)})^{\bullet} \longrightarrow \EC(\Kn)^{\bullet}
\]
induced by the injection $\EC(K_{(n)})\hookrightarrow \EC(\Kn)$.
Then
\[
\image(\eta_n)[\Phi_n(x)] \simeq \left( \bigoplus_{\substack{\val\in \A\\
n-\text{positive}}} W_{\val}\right) \otimes  \frac{\Qp[x]}{\langle \Phi_n(x)\rangle}.
\]
\end{lemma}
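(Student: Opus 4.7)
The plan is to pass to the $\Qp$-vector space level, decompose both the source and target of $\eta_n$ into $\cG_n$-isotypic components, and then read off the $\Phi_n(x)$-torsion piece.

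First, identify the source and target as $\cG_n$-modules. Since $\EC(K_{(n)})$ is finitely generated, one has $\EC(K_{(n)})^{\bullet} = \EC(K_{(n)}) \otimes_{\Z} \Qp$, and Proposition~\ref{main result from note clarifying remark} yields the isotypic decomposition
\[
\EC(K_{(n)}) \otimes \Qp \simeq \bigoplus_{\val \in \A,\ 0 \leq k \leq n} M_{\val,k}^{e_{\val,k}}, \qquad M_{\val,k} := W_{\val} \otimes_{\Q} \frac{\Qp[x]}{\langle \Phi_k(x)\rangle}.
\]
Lemma~\ref{Lemma 1 note 1} identifies $\EC(\Kn)^{\bullet}$ with the regular representation $\Qp[\cG_n]$, which under \ref{star} decomposes as $\bigoplus_{\val,k} M_{\val,k}$ with each summand an irreducible $\cG_n$-representation appearing exactly once.

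The image $\image(\eta_n)$ is a $\cG_n$-submodule of this regular representation, so since the irreducibles appear with multiplicity one, $\image(\eta_n) \simeq \bigoplus_{(\val,k) \in S} M_{\val,k}$ for a uniquely determined set $S \subseteq \A \times \{0,\dots,n\}$. The heart of the argument is to prove that $(\val,k) \in S$ if and only if $e_{\val,k} > 0$. The direction ``$\Rightarrow$'' is immediate: if $e_{\val,k} = 0$ the source has no $(\val,k)$-isotypic component, so neither does the image. For ``$\Leftarrow$'', Schur's lemma forces the restriction of $\eta_n$ to the $(\val,k)$-isotypic component $M_{\val,k}^{e_{\val,k}}$ of the source to be either zero or surjective onto the unique copy of $M_{\val,k}$ in the target, and one must rule out the zero alternative. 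The key observation is that a non-zero $(\val,k)$-isotypic component in $\EC(K_{(n)}) \otimes \Qp$ reflects, via Proposition~\ref{main result from note clarifying remark}, a genuine rank jump $\rk_{\Z}\EC(K_{(k)}^{\sG_{\val}}) > \rk_{\Z}\EC(K_{(k-1)}^{\sG_{\val}})$; the new classes produced at this layer do not lie in the kernel of localization at the primes above $p$, and a trace/corestriction computation combined with the injectivity of the formal-group logarithm shows that their images have non-zero projection onto the $(\val,k)$-isotypic of the target.

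Finally, to extract $\Phi_n(x)$-torsion: the polynomials $\Phi_k(x)$ for $0 \leq k \leq n$ are pairwise coprime in $\Qp[x]$, so $\Phi_n(x)$ acts invertibly on $M_{\val,k}$ for $k \neq n$ and as zero on $M_{\val,n}$. Combining this with the characterisation of $S$ in the previous paragraph yields
\[
\image(\eta_n)[\Phi_n(x)] \;=\; \bigoplus_{\val \,:\,(\val,n)\in S} M_{\val,n} \;=\; \bigoplus_{\substack{\val \in \A\\ n\text{-positive}}} W_{\val} \otimes \frac{\Qp[x]}{\langle \Phi_n(x)\rangle},
\]
which is the claimed identification. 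The main obstacle is the ``$\Leftarrow$'' direction of the characterisation of $S$: one must ensure that every isotypic component with positive source multiplicity survives in the image, i.e., that the new Mordell--Weil classes at the relevant layer do not collapse after $p$-adic completion at the primes above $p$.
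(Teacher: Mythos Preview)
Your overall strategy matches the paper's: decompose source and target into $\cG_n$-isotypic pieces, note that under \ref{star} the target is multiplicity-free so the image is a sub-sum of irreducibles, determine which irreducibles occur, and then restrict to $\Phi_n$-torsion. The only point of divergence is the step you yourself flag as the ``main obstacle'': showing that $e_{\val,n}>0$ forces $M_{\val,n}\subseteq\image(\eta_n)$.

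Here the paper's argument is considerably simpler than the trace/corestriction and formal-logarithm route you sketch, and you should adopt it. If $e_{\val,n}>0$ then the $(\val,n)$-isotypic of $\EC(K_{(n)})\otimes\Q$ is non-zero; clearing denominators produces an honest point $\mathsf{P}\in\EC(K_{(n)})$ whose image in $\EC(K_{(n)})\otimes\Qp$ lies in that component and is non-zero, so $\mathsf{P}$ is non-torsion. The localization $\EC(K_{(n)})\hookrightarrow\EC(\Kn)$ is injective, so $\mathrm{loc}(\mathsf{P})$ is again non-torsion; and since $\EC(\Kn)\simeq\Zp^{[K_{(n)}:\Q]}\times(\text{finite})$, the natural map $\EC(\Kn)\to\EC(\Kn)^{\bullet}$ kills only torsion. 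Hence $\eta_n(\mathsf{P}\otimes1)\neq0$. Equivariance places this element in the $(\val,n)$-isotypic of the target, and irreducibility of $M_{\val,n}$ over $\Qp$ (this is exactly where \ref{star} is used) then gives the full summand inside $\image(\eta_n)$. No trace manipulations or explicit logarithm computations are needed; the whole step is just ``non-torsion global point $\Rightarrow$ non-torsion local point $\Rightarrow$ non-zero in the $\Qp$-linearisation''.
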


Before we prove the result, we want to point out that the right hand side of the isomorphism in the second assertion depends on the elliptic curve as the direct sum is over $\val\in \A$ with $n$-positive, and the notion of `positivity' captures information about the elliptic curve.

\begin{proof}
We start by studying the co-domain of the function $\eta_n$: 
\begin{align*}
\EC(\Kn)^{\bullet} & \simeq \Qp[\cG_n] \hspace{2.5in} \text{ by Lemma~\ref{Lemma 1 note 1}}\\
& \simeq \Qp[\cG_{n-1}] \oplus \frac{\Qp[\sG][x]}{\langle \Phi_n(x)\rangle}\\ 
& \simeq \EC(\K_{(n-1)})^{\bullet} \oplus \frac{\Qp[\sG][x]}{\langle \Phi_n(x)\rangle} \hspace{1.5in} \text{ by Lemma~\ref{Lemma 1 note 1}} \\
&\simeq \EC(\K_{(n-1)})^{\bullet} \oplus \left( \left(\bigoplus_{\val} W_{\val}\right) \otimes \frac{\Qp[x]}{\langle \Phi_n(x)\rangle}\right).
\end{align*}
Thus,
\[
\EC(\K_{(n)})^{\bullet}[\Phi_n(x)] \simeq \left(\bigoplus_{\val} W_{\val}\right) \otimes \frac{\Qp[x]}{\langle \Phi_n(x)\rangle}.
\]
We note that the above isomorphism shows that the $\Phi_n(x)$-torsion part of the co-domain does not `see' the elliptic curve in any way.

Suppose that $\val\in \A$ is $n$-positive; i.e., there exists a non-torsion point $\mathsf{P}\in \EC(K_{(n)})\setminus \EC(K_{(n-1)})$ such that $\mathsf{P}$ lies in some component $W_{\val} \otimes \frac{\Q[x]}{\langle \Phi_n(x)\rangle}$.
Therefore, $\mathsf{P}\in \EC(\Kn)\setminus \EC(\K_{(n-1)})$.
This gives a non-trivial element in $\image(\eta_n)[\Phi_n(x)]$.
Here, we are using \ref{star}.
Since $W_{\val} \otimes \frac{\Qp[x]}{\langle \Phi_n(x)\rangle}$ is an irreducible $\cG_n$-representation over $\Qp$; it follows that
\[
W_{\val} \otimes \frac{\Qp[x]}{\langle \Phi_n(x)\rangle} \subseteq \image(\eta_n)[\Phi_n(x)].
\]

On the other hand, if $\val\in \A$ is \emph{not} $n$-positive, then $\EC(\K_{(n)})^{\bullet}[\Phi_n(x)]$ does not contain any component $W_{\val} \otimes \frac{\Qp[x]}{\langle \Phi_n(x)\rangle}$, and neither does $\image(\eta_n)[\Phi_n(x)]$.
This completes the proof.
\end{proof}

Before continuing, we note that for a fixed positive integer $n$, the following isomorphisms hold:
\begin{align}
\label{1}
\EC(K_{(n)}) \otimes \Qp & \simeq \bigoplus_{k=0}^n \bigoplus_{\substack{\val\in \A\\ k-\text{positive}}} \left( W_{\val}^{e_{\val,k}} \otimes \frac{\Qp[x]}{\langle \Phi_k(x)\rangle} \right)\\
\label{2}
\image(\eta_n) & \simeq \bigoplus_{k=0}^n \bigoplus_{\substack{\val\in \A\\ k-\text{positive}}} \left( W_{\val} \otimes \frac{\Qp[x]}{\langle \Phi_k(x)\rangle} \right).
\end{align}

\subsection{\texorpdfstring{$\Lambda$}{}-module structure of the fine Mordell--Weil group}
We relate the \emph{rank growth data} of $\EC$ in each layer of the extension $K_{\cyc}/\Q$ to the structure of the fine Mordell--Weil group $\cM(\EC/K_{\cyc})$.
Further, if we assume that the fine Shafarevich--Tate group is finite over $K_{\cyc}$, then the \emph{rank growth data} completely determines the structure of the fine Selmer group $\Sel_0(\EC/K_{\cyc})$.

In \cite[Theorem~3.3]{Lei23}, Lei has shown that for some integers $c_i$ and $u$, the $\Lambda$-module structure of the fine Mordell--Weil group is as follows
\[
\cM(\EC/K_{\cyc})^\vee \sim \bigoplus_{i=1}^{u} \frac{\Lambda}{\langle \Phi_{c_i}(x) \rangle}.
\]
Our first main result is to make the above result more precise.
In other words, we count the number of times $\Phi_n(x)$ arises in the structure of $\cM(\EC/K_{\cyc})^\vee$.
For this, we need to introduce a definition.

\begin{definition}
\label{theta n}
Fix a non-negative integer $n$ and consider the $n$-th layer of the $\Zp$-extension of $K$ such that $\Gal(K_{(n)}/\Q)\simeq \cG_n$.
Define
\begin{align*}
\theta_n = \theta(\cG_n) &:= \dim_{\Qp}\left( \bigoplus_{\substack{\val\in \A \\ n-\text{positive}}} W_{\val}\right) = \sum_{\substack{\val\in \A \\ n-\text{positive}}} \dim_{\Qp}(W_{\val})\\
e_n &:= 
\frac{\rk_{\Z}\EC(K_{(n)}) - \rk_{\Z}\EC(K_{(n-1)})}{\phi(p^n)}
\end{align*}
\end{definition}

\begin{remark}
When $K=\Q$ and $\cG_n = \Gamma_n$, we know that $\val = 0$.
This means that $\theta_n$ is either 1 (or 0) depending on whether $0$ is (or is not) $n$-positive.
But, this latter condition is completely determined by whether $e_n$ is 0 or not.
In other words we obtain $\theta_n = 0$ (resp.~1) when $e_n = 0$ (resp.~positive).
\end{remark}

We think of $e_n$ as an indicator of whether there is rank jump from the $(n-1)$-th layer to the $n$-th layer.
Whereas, $\theta_n$ measures the growth of $p$-adic image of $\eta_n$, due to the rank growth at the $n$-th layer.
We now rewrite the statement of Theorem~\ref{thm A} and give its proof.

\begin{theorem}
\label{main result fine Selmer} 
Let $K/\Q$ be an abelian extension which is disjoint from $\Q_{\cyc}$ and suppose that \ref{star} is satisfied for $\Gal(K/\Q) = \mathsf{G}$.
Let $\EC/\Q$ be an elliptic curve with good reduction at $p\geq 3$.
Suppose that $\Zha(\EC/K_{(n)})[p^\infty]$ is finite for each $n$.
Then the $\Lambda$-module structure of the dual fine Mordell--Weil group is given by
\[
\cM(\EC/K_{\cyc})^\vee \sim \bigoplus_{n\geq 0} \left(\frac{\Lambda}{\langle\Phi_n(x)\rangle}\right)^{e_n - \theta_n}.
\]
In particular, the characteristic ideal of the dual fine Mordell--Weil group is given by
\[
\charac_{\Lambda}(\cM(\EC/K_{\cyc})^\vee) = \prod_{n\geq 0} \langle (\Phi_n(x))^{e_n -\theta_n} \rangle.
\]
\end{theorem}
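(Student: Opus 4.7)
Since \cite[Theorem~3.3]{Lei23} already provides the pseudo-isomorphism
\[
\cM(\EC/K_{\cyc})^\vee \sim \bigoplus_{i} \Lambda/\langle\Phi_{c_i}(x)\rangle,
\]
my task reduces to determining, for each $n\ge 0$, the multiplicity $a_n := \#\{i : c_i = n\}$; the claimed characteristic ideal formula then follows immediately. The strategy is to compute $a_n$ by passing to $\Gamma_n$-coinvariants, identifying these with the corresponding finite-level fine Mordell--Weil groups, and reading off the $\Phi_n$-isotypic component over $\Qp$ using the decompositions~\eqref{1} and~\eqref{2}.

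I would first exploit the finiteness of $\Zha(\EC/K_{(n)})[p^\infty]$ in two ways. At each fixed layer $K_{(n)}$, the short exact sequence placing $\cM(\EC/K_{(n)})$ inside $\Sel_0(\EC/K_{(n)})$ with finite cokernel shows that passing between them does not alter pseudo-isomorphism invariants. Secondly, the same hypothesis underlies a control-theorem comparison identifying $(\cM(\EC/K_{\cyc})^\vee)_{\Gamma_n}$ with $\cM(\EC/K_{(n)})^\vee$ up to pseudo-null error; the technical apparatus for this is already present in \cite{Lei23}. Since $\Phi_m(x)$ and $\omega_n(x) := (1+x)^{p^n}-1$ are coprime in $\Qp\llbracket x\rrbracket$ whenever $m > n$, the pseudo-iso class of $(\cM(\EC/K_{\cyc})^\vee)_{\Gamma_n}$ is $\bigoplus_{m \leq n}(\Lambda/\langle\Phi_m\rangle)^{a_m}$, whose $\Phi_n$-isotypic component after tensoring with $\Qp$ has $\Qp$-dimension precisely $\phi(p^n)\, a_n$.

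On the finite-level side, the alternative definition~\eqref{alternative defn of FSG} of the fine Selmer group combined with Pontryagin duality shows that $\cM(\EC/K_{(n)})^\vee \otimes \Qp$ has the same $\Qp[\cG_n]$-isotypic multiplicities as $\ker(\eta_n)$: one passes through the cokernel of the transpose map between $p$-adic completions, and $\Qp$-linear duality preserves isotypic multiplicities for representations of the finite group $\cG_n$. Combining the decomposition~\eqref{1} for $\EC(K_{(n)})\otimes\Qp$ with Lemma~\ref{lemma 3 note 1} (equivalently,~\eqref{2}) then yields
\[
\ker(\eta_n) \simeq \bigoplus_{k=0}^{n}\bigoplus_{\substack{\val\in\A\\ k\text{-positive}}} W_\val^{e_{\val,k}-1}\otimes \frac{\Qp[x]}{\langle\Phi_k(x)\rangle},
\]
whose $\Phi_n$-isotypic piece has $\Qp$-dimension
\[
\phi(p^n)\sum_{\substack{\val\in\A\\ n\text{-positive}}}(e_{\val,n}-1)\dim W_\val = \phi(p^n)(e_n - \theta_n)
\]
by Definition~\ref{theta n} together with the identity for $e_n$ extracted from Proposition~\ref{main result from note clarifying remark}.

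Matching the two computations of the $\Phi_n$-isotypic dimension gives $a_n = e_n - \theta_n$, as required. The delicate step is the careful articulation of the control theorem comparing $(\cM(\EC/K_{\cyc})^\vee)_{\Gamma_n}$ with $\cM(\EC/K_{(n)})^\vee$ and the Pontryagin-duality bookkeeping in the previous paragraph; both are routine but must be set up cleanly. The genuinely new content is then the multiplicity computation above, which falls out of the preparatory lemmas in Section~\ref{sec: algebraic preliminary}.
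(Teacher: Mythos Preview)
Your proposal is correct and follows essentially the same route as the paper's proof. The paper is slightly more streamlined: rather than spelling out the control-theorem comparison of $(\cM(\EC/K_{\cyc})^\vee)_{\Gamma_n}$ with $\cM(\EC/K_{(n)})^\vee$ via Pontryagin duality, it invokes \cite[Corollary~3.8]{Lei23} directly to obtain $V_p\cM(\EC/K_{(n)})\simeq\bigoplus_{t\le n}(\Qp[x]/\langle\Phi_t\rangle)^{s_t}$, and then reads off $s_n=e_n-\theta_n$ from the exact sequence $0\to V_p\cM(\EC/K_{(n)})\to \EC(K_{(n)})^{\bullet}\xrightarrow{\eta_n}\EC(\Kn)^{\bullet}$ (this is your $\ker(\eta_n)$) together with Lemma~\ref{lemma 3 note 1}; the dimension count is done at the coarser $\Gamma_n$-level rather than via the full $\cG_n$-decomposition you use, but the two give the same answer. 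One small remark: your appeal to \eqref{alternative defn of FSG} is slightly off-target, since that sequence concerns $\Sel_0$ rather than $\cM$; the identification of $V_p\cM(\EC/K_{(n)})$ with $\ker(\eta_n)$ comes directly from the definition of the fine Mordell--Weil group as the kernel of the localization map $\EC(K_{(n)})\otimes\Qp/\Zp\to\EC(\Kn)\otimes\Qp/\Zp$.
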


\begin{proof}
Fix an integer $n$.
We start with the exact sequence
\begin{equation}
\label{SES proof thm 1}
0 \longrightarrow T_p\cM(\EC/K_{(n)}) \otimes \Qp \longrightarrow \EC(K_{(n)})^{\bullet} \xrightarrow{\eta_n} \EC(\Kn)^{\bullet}
\end{equation}
We analyze the first two terms of the above exact sequence.
First, we observe that by applying \cite[Corollary~3.8]{Lei23}, we get the following isomorphism of $\Qp[\Gamma_n]$-modules \edit{for some integers $s_t$}
\begin{equation}
\label{st}
T_p\cM(\EC/K_{(n)}) \otimes \Qp = V_p\cM(\EC/K_{(n)}) \simeq \bigoplus_{t= 0}^n \left( \frac{\Qp[x]}{\langle\Phi_{t}(x)\rangle}\right)^{s_t}.
\end{equation}

\noindent \emph{Claim:} As $\Lambda$-modules the following isomorphism holds:
\[
\EC(K_{(n)}) \otimes \Qp \simeq \bigoplus_{t=0}^n \left( \frac{\Qp[x]}{\langle\Phi_{t}(x)\rangle}\right)^{e_t}.
\]

\noindent \emph{Justification:} We apply Proposition~\ref{Prop 2 note 1} for $F=K$ and $L=K_{(n)}$ and $G =\Gal(K_{(n)}/K) = \Gamma_n$.
As $\Gamma_n$-modules, we have the following isomorphism 
\[
\EC(K_{(n)}) \otimes \Qp \simeq \bigoplus_{t=0}^n \left( \frac{\Qp[x]}{\langle\Phi_{t}(x)\rangle}\right)^{m_t} \text{ for some integers } m_t.
\]
By convention, note that $m_0 = e_0$.
For each integer $1\leq i \leq n$, we observe that
\[
\rk_{\Z}\EC(K_{(i)}) = \rk_{\Z}\EC(K_{(n)}^{\Gamma_n^{p^i}}) = m_0 + \sum_{k=1}^{i} m_k (p^{k-1}(p-1)).
\]
Now we apply this formula with $i=t$ and $i=t-1$, we get
\[
\rk_{\Z}\EC(K_{(t)}) - \rk_{\Z}\EC(K_{(t-1)}) = m_t(p^{t-1}(p-1)).
\]
This shows that $e_t = m_t$.
This proves the claim.

Taking $\Phi_n(x)$-torsion of \eqref{SES proof thm 1}, we obtain
\[
0 \longrightarrow \left( \frac{\Qp[x]}{\langle\Phi_{n}(x)\rangle}\right)^{s_n} \longrightarrow \left( \frac{\Qp[x]}{\langle\Phi_{n}(x)\rangle}\right)^{e_n} \longrightarrow \image(\eta_n)[\Phi_n(x)] \longrightarrow 0.
\]
The final term of the above short exact sequence was studied in Lemma~\ref{lemma 3 note 1}.
In particular, we obtained that
\[
\image(\eta_n)[\Phi_n(x)] \simeq \left( \bigoplus_{\substack{\val\in \A\\
n-\text{positive}}} W_{\val}\right) \otimes \frac{\Qp[x]}{\langle \Phi_n(x)\rangle}.
\]
By comparing $\Qp$-dimensions of the three terms in the above short exact sequence, we conclude that
\[
s_n + \theta_n= e_n
\]
and the result is now immediate.
\end{proof}

\begin{remark}
We can check that this reduces to \cite[Theorem~C and Corollary~4.3]{Lei23} when $K=\Q$.
\end{remark}

\begin{remark}
\label{rem: one div}
If \ref{star} is removed then $\image(\eta_n)$ could potentially be smaller than what is quantified by $\theta_n$.
In particular, we still have one divisibility
\[
 \prod_{n\geq 0} \langle (\Phi_n(x))^{e_n -\theta_n} \rangle \Big\vert \charac_{\Lambda}(\cM(\EC/K_{\cyc})^\vee) .
\]
\end{remark}
\subsection{Equivariant results}
Now, we study the structure of the fine Mordell--Weil group as a $\Zp\llbracket \cG \rrbracket$-module.
We remind the reader $\cG = \Gal(K_{\cyc}/\Q) \simeq \Gal(K/\Q) \times \Gal(K_{\cyc})/K = \sG \times \Gamma$.

In the previous section, we saw the structure of $T_p\cM(\EC/K_{(n)}) \otimes \Qp$ as a $\Qp[\Gamma_n]$-module.
On the other hand, we see from \eqref{1} and \eqref{2} that as $\Qp[\cG_n]$-module the following isomorphism is true:
\begin{equation}
\label{TpM tensor Qp equation}
T_p\cM(\EC/K_{(n)}) \otimes \Qp \simeq \bigoplus_{k=0}^n \bigoplus_{\substack{\val\in \A\\ k-\text{positive}}} \left( W_{\val}^{e_{\val,k}-1} \otimes \frac{\Qp[x]}{\langle \Phi_k(x)\rangle} \right).
\end{equation}
\edit{By \cite[Remark~2.4(b)]{Lei23}, we know that there is a pseudo-isomorphism of $\Lambda$-modules}
\[
\cM(\EC/K_{(n)})^\vee \sim \left( T_p \cM(\EC/K_{(n)})\right)^D.
\]
\edit{Viewing these modules as $\Zp[\cG_n]$-modules, we notice that the above homomorphism is also a $\Zp[\cG_n]$ map with finite kernel and cokernel.}
Using this (almost) identification, we prove the following result.

\begin{lemma}
For each positive integer $n$, there is a \edit{homomorphism of} $\Zp[\cG_n]$-module\edit{s with finite kernel and cokernel} 
\[
\psi_n : \cM(\EC/K_{(n)})^\vee \longrightarrow \bigoplus_{k=0}^n \bigoplus_{\substack{\val\in \A\\ k-\text{positive}}} \left( \overline{W}_{\val}^{e_{\val,k}-1} \otimes \frac{\Zp[x]}{\langle \Phi_k(x)\rangle} \right).
\]
\end{lemma}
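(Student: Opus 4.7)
The plan is to deduce the claimed pseudo-isomorphism from the $\Qp[\cG_n]$-module identification in \eqref{TpM tensor Qp equation} via the general $\Zp[\cG_n]$-pseudo-isomorphism
\[
\cM(\EC/K_{(n)})^\vee \sim \left(T_p \cM(\EC/K_{(n)})\right)^D
\]
provided by \cite[Remark~2.4(b)]{Lei23}. This reduces the task to producing a $\Zp[\cG_n]$-pseudo-isomorphism from $(T_p \cM(\EC/K_{(n)}))^D$ to the target module.

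The key step is a comparison of $\Zp[\cG_n]$-lattices. Since $T_p\cM(\EC/K_{(n)})$ is a finitely generated, $\Zp$-torsion-free module, it sits as a $\Zp[\cG_n]$-lattice inside the $\Qp[\cG_n]$-space $V := T_p\cM(\EC/K_{(n)}) \otimes \Qp$. Under \ref{star}, equation \eqref{TpM tensor Qp equation} identifies $V$ with a direct sum of irreducible $\Qp[\cG_n]$-representations, and the ``standard'' integral model
\[
M_{\mathrm{std}} := \bigoplus_{k=0}^n \bigoplus_{\substack{\val\in \A\\ k\text{-positive}}} \overline{W}_{\val}^{e_{\val,k}-1} \otimes \frac{\Zp[x]}{\langle \Phi_k(x)\rangle}
\]
is a second $\Zp[\cG_n]$-lattice in $V$. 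Any two $\Zp[\cG_n]$-lattices in $V$ are commensurable, so the inclusions of $T_p\cM(\EC/K_{(n)}) \cap M_{\mathrm{std}}$ into each side yield a $\Zp[\cG_n]$-pseudo-isomorphism $T_p\cM(\EC/K_{(n)}) \sim M_{\mathrm{std}}$.

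Applying $\operatorname{Hom}_{\Zp}(-, \Zp)$ preserves pseudo-isomorphisms between $\Zp$-free, finitely generated modules (finite $\Zp$-modules have trivial $\Zp$-dual and finite $\operatorname{Ext}^1_{\Zp}$), giving $(T_p\cM(\EC/K_{(n)}))^D \sim M_{\mathrm{std}}^D$ as $\Zp[\cG_n]$-modules. To close the loop I would invoke the self-duality $M_{\mathrm{std}}^D \simeq M_{\mathrm{std}}$: under \ref{star} each building block $\overline{W}_\val \otimes \frac{\Zp[x]}{\langle \Phi_k(x) \rangle}$ is the ring of integers of a local cyclotomic extension, hence Gorenstein over $\Zp$, so its $\Zp$-dual is isomorphic (via the inverse different) to itself as a module over itself; the induced $\cG_n$-action on the dual is twisted by the inversion involution $\iota$, but the defining ideals $\langle \Phi_k(x)\rangle$ and $\langle \omega(\sigma_i^{p_i^{\alpha_i - 1}}, p_i)\rangle$ are $\iota$-stable since inversion permutes primitive roots of unity, so the twisted module is isomorphic to the untwisted one.

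The main obstacle is the self-duality step: one must verify carefully that, under \ref{star}, each integral irreducible block is $\Zp[\cG_n]$-isomorphic to its $\Zp$-dual, handling the $\sG$-action coming from $\overline{W}_\val$ and the $\Gamma_n$-action coming from $\Zp[x]/\langle\Phi_k(x)\rangle$ simultaneously, and showing that the compound inversion twist is absorbed by the $\iota$-stability of the cutting-out polynomials. Once these identifications are in place, concatenating the pseudo-isomorphisms $\cM(\EC/K_{(n)})^\vee \sim (T_p\cM(\EC/K_{(n)}))^D \sim M_{\mathrm{std}}^D \simeq M_{\mathrm{std}}$ delivers the desired $\psi_n$.
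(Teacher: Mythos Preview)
Your proposal is correct and follows essentially the same route as the paper: both arguments start from the $\Zp[\cG_n]$-pseudo-isomorphism $\cM(\EC/K_{(n)})^\vee \sim (T_p\cM(\EC/K_{(n)}))^D$ of \cite[Remark~2.4(b)]{Lei23}, compare $T_p\cM(\EC/K_{(n)})$ with the standard lattice $M_{\mathrm{std}}$ inside the common $\Qp[\cG_n]$-space \eqref{TpM tensor Qp equation} (the paper by choosing an integral basis, you by commensurability of lattices), and then invoke the self-duality $(\overline{W}_{\val}\otimes \Zp[x]/\langle\Phi_k(x)\rangle)^D\simeq \overline{W}_{\val}\otimes \Zp[x]/\langle\Phi_k(x)\rangle$ as $\Zp[\cG_n]$-modules. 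The paper simply asserts this self-duality componentwise via $(\overline{V}_{i,j}\otimes\Zp)^D\simeq \overline{V}_{i,j}\otimes\Zp$, while you supply more justification (Gorenstein, inverse different, $\iota$-stability of the defining ideals); the overall architecture is the same.
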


\begin{proof}
First, consider an integral basis of the right side of \eqref{TpM tensor Qp equation}.
With this in hand, we know
\[
0 \longrightarrow \bigoplus_{k=0}^n \bigoplus_{\substack{\val\in \A\\ n-\text{positive}}} \left( \overline{W}_{\val}^{e_{\val,k}-1} \otimes \frac{\Zp[x]}{\langle \Phi_k(x)\rangle} \right) \longrightarrow T_p\cM(\EC/K_{(n)}) \longrightarrow \text{ finite } \longrightarrow 0.
\]
As $\Zp[G^{(i)}]$-modules, we have
\[
\left( \overline{V}_{i,j} \otimes \Zp \right)^D \simeq \overline{V}_{i,j} \otimes \Zp.
\]
In other words, $\overline{V}_{i,j} \otimes \Zp$ is self-dual.
It immediately follows that as $\Zp[\cG_n]$-modules, $\overline{W}_{\val} \otimes \frac{\Zp[x]}{\langle \Phi_k(x) \rangle}$ is self-dual for all $k$.
Taking $\Zp$-dual of the above short exact sequence yields
\[
0 \longrightarrow \left(T_p\cM(\EC/K_{(n)})\right)^D \longrightarrow \bigoplus_{k=0}^n \bigoplus_{\substack{\val\in \A\\ k-\text{positive}}} \left( \overline{W}_{\val}^{e_{\val,k}-1} \otimes \frac{\Zp[x]}{\langle \Phi_k(x)\rangle} \right) \longrightarrow \text{ finite } \longrightarrow 0.
\]
The claim follows immediately.
\end{proof}

The main result of this section is the following which should be considered as an equivariant refinement of Theorem~\ref{main result fine Selmer}.

\begin{theorem}
\label{thm 4.12}
Fix an odd prime $p$.
Let $K/\Q$ be an abelian extension which is disjoint from $\Q_{\cyc}$ and suppose that \ref{star} is satisfied for $\Gal(K/\Q) = \mathsf{G}$.
Let $\EC/\Q$ be an elliptic curve with good reduction at $p$ such that $\Zha(\EC/K_{(n)})[p^\infty]$ is finite for all $n$.
Then the following map is a pseudo-isomorphism of $\Lambda[\sG]$-modules
\[
\psi: \cM(\EC/K_{\cyc})^\vee \longrightarrow \bigoplus_{k=0}^\infty \bigoplus_{\substack{\val\in \A\\ k-\text{positive}}} \left( \overline{W}_{\val}^{e_{\val,k}-1} \otimes \frac{\Lambda}{\langle \Phi_k(x)\rangle} \right).
\]
\end{theorem}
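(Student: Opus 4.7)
The strategy is to assemble the finite-level pseudo-isomorphisms $\psi_n$ from the preceding lemma into a compatible inverse system and pass to the limit. The first step is to verify that the multiplicities $e_{\val,k}$ are intrinsic to the rank growth at the $k$-th layer of $K_{\cyc}/\Q$, and hence do not depend on the ambient layer $n\ge k$. This follows from Proposition~\ref{main result from note clarifying remark} and the observation that the $W_{\val} \otimes \frac{\Q[x]}{\langle \Phi_k(x)\rangle}$-isotypic component is preserved when enlarging $K_{(k)}$ to $K_{(n)}$. As a consequence the target modules
\[
T_n := \bigoplus_{k=0}^n \bigoplus_{\substack{\val\in \A\\ k\text{-positive}}} \left( \overline{W}_{\val}^{e_{\val,k}-1} \otimes \frac{\Zp[x]}{\langle \Phi_k(x)\rangle} \right)
\]
form a coherent projective system, with the transition $T_{n+1} \twoheadrightarrow T_n$ being projection that kills the $k=n+1$ summand.

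Next, we would choose the $\psi_n$'s coherently so that they are compatible with the natural surjections $\cM(\EC/K_{(n+1)})^\vee \twoheadrightarrow \cM(\EC/K_{(n)})^\vee$ dual to the direct-limit inclusions on the fine Mordell--Weil side. This compatibility rests on the $\Gamma_n$-control properties of $\cM(\EC/K_{\cyc})$ under the assumption that $\Zha(\EC/K_{(n)})[p^\infty]$ is finite for each $n$, together with the explicit $\Qp[\cG_n]$-identification \eqref{TpM tensor Qp equation}, which pins down the equivariant decomposition layer-by-layer. One may rigidify the choice of $\psi_n$ by fixing once and for all an integral basis inside $V_p\cM(\EC/K_{(n)})$ for each $n$ and extending it at the next level.

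The third step is to take $\psi := \varprojlim_n \psi_n$. Since Pontryagin duality converts $\cM(\EC/K_{\cyc}) = \varinjlim_n \cM(\EC/K_{(n)})$ into $\cM(\EC/K_{\cyc})^\vee = \varprojlim_n \cM(\EC/K_{(n)})^\vee$, this yields a $\Lambda[\sG]$-linear map
\[
\psi : \cM(\EC/K_{\cyc})^\vee \longrightarrow \varprojlim_n T_n \simeq \bigoplus_{k=0}^\infty \bigoplus_{\substack{\val\in \A\\ k\text{-positive}}} \left( \overline{W}_{\val}^{e_{\val,k}-1} \otimes \frac{\Lambda}{\langle \Phi_k(x)\rangle} \right).
\]
The identification on the right uses that $\Lambda/\langle \Phi_k(x)\rangle$ is a finitely generated $\Zp$-module of rank $\phi(p^k)$ on which $\Lambda$ acts through its finite quotient $\Zp[x]/\langle \Phi_k(x)\rangle$, so the fixed summand contributes unchanged to $T_n$ for every $n \ge k$.

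Finally one has to show $\ker\psi$ and $\cok\psi$ are pseudo-null over $\Lambda$. At each finite level these are finite, so one can invoke Mittag--Leffler on the projective system of finite kernels and a dimension count on cokernels, using that only the top new summand $A_{n+1}$ can contribute additional content. Combined with the already-established $\Lambda$-module structure in Theorem~\ref{main result fine Selmer}, a $\Zp$-rank comparison forces the kernel and cokernel to be finitely generated over $\Zp$, hence pseudo-null over $\Lambda$; the $\sG$-equivariance is automatic from the finite-level statement. The main obstacle is precisely this last step: controlling the inverse limit of the kernels and cokernels of $\psi_n$ in a $\Lambda[\sG]$-equivariant way, which requires the coherent choice of bases from the second step together with the Mittag--Leffler argument.
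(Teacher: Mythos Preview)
Your inverse-limit strategy is far more elaborate than what is needed, and the gap you flag at the end is genuine: arranging the $\psi_n$ into a strictly compatible system is not automatic, since each $\psi_n$ in the preceding lemma is built by \emph{choosing} an integral basis inside $V_p\cM(\EC/K_{(n)})$, and there is no canonical way to make these choices cohere as $n$ grows. Your Mittag--Leffler sketch would then have to be made precise on top of that.

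The paper sidesteps all of this with a single observation you are missing: the direct sum on the right-hand side is \emph{finite}. Indeed, by \cite[Theorem~3.3]{Lei23} (or Theorem~\ref{main result fine Selmer}) the $\Lambda$-module $\cM(\EC/K_{\cyc})^\vee$ is pseudo-isomorphic to $\bigoplus_{i=1}^u \Lambda/\langle\Phi_{c_i}(x)\rangle$ for finitely many $c_i$, so only finitely many $k$ can be $k$-positive for some $\val$. Hence for any single $n$ exceeding all the $c_i$, the natural map
\[
\psi':\cM(\EC/K_{\cyc})^\vee \longrightarrow \cM(\EC/K_{(n)})^\vee
\]
(dual to the inclusion $\cM(\EC/K_{(n)})\hookrightarrow\cM(\EC/K_{\cyc})$) is already a pseudo-isomorphism of $\Lambda[\sG]$-modules, and the target $T_n$ already coincides with the full ``infinite'' direct sum. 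One then simply sets $\psi=\psi_n\circ\psi'$ for that fixed $n$. No compatible system, no inverse limit, no Mittag--Leffler is required.
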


\begin{proof}
For $n \gg 0$, there is a pseudo-isomorphism of $\Lambda[\sG]$-modules
\[
\psi' : \cM(\EC/K_{\cyc})^\vee \longrightarrow \cM(\EC/K_{(n)})^\vee.
\]
\edit{This follows from \cite[Theorem~3.3 and 3.6]{Lei23}.}
The result now follows by defining $\psi = \psi_n \circ \psi'$.
\end{proof}
\begin{remark}
\label{Rem non abelian}
In this section, the condition \ref{star} is used in an essential way in the proof of Lemma~\ref{lemma 3 note 1}.
In particular, we use the fact that irreducible representations of $\cG_n$ over $\Q$ stay irreducible after extending the scalars to $\Qp$.
There are many other \textit{non-abelian} groups which satisfy the same condition.
For instance, rational groups \edit{(by this we mean a finite group all of whose complex character values are rational)} whose Schur indices \edit{with respect to the field $\Q$} are \edit{all} 1 satisfy this property. \edit{This follows from the description of Schur indices given in Section~12.2 of \cite{Serre1977}.}
Some examples of such groups are symmetric groups $S_n$ and exceptional Weyl groups $E_6, E_7$ and $E_8$.
\edit{See for instance the main theorem of \cite{Benard}.}
One can check that when $G$ is such a group, all the results proved in this chapter can be generalized to this non-commutative setting using our methods.
In that case, one \edit{can} use the isomorphism \eqref{eq: isomorphism to answer ref} to \edit{determine} $e_{\val, k}$ \edit{uniquely} and then Definition~\ref{theta n} to define $\theta_n$.
Therefore, the structure of $\EC(K_{(n)})$ at each layer determines the structure of the fine Mordell--Weil group over the cyclotomic extensions.
\end{remark}

\begin{remark}\label{anticyclotomic remark}
Since the dihedral group $D_{2p^n}$ has its character values in $\Q(\zeta_p+\zeta_p^{-1})$ where $p$ does not split and satisfies the property of Schur indices mentioned in Remark~\ref{Rem non abelian} for each $n$, our results are also true for the anti-cyclotomic $\Zp$-extension of an imaginary quadratic $K_{\operatorname{ac}}/K$ whenever the Mordell--Weil rank growth is bounded.
In view of the growth number conjecture of Mazur \cite[\S~18, p.~201]{Maz84}, the Mordell--Weil rank of $\EC$ stays bounded along any $\Zp$-extension of the imaginary quadratic field $K$, unless the extension is the anti-cyclotomic one and the root number of $\EC/K$ is $-1$.
In some situations the conjecture is known, see for example \cite{Gre_PCMS, KMS, GHKL, KL25}.
\end{remark}

\section{Structure of the \texorpdfstring{$\pm$}{} Mordell--Weil group}
\label{sec: pm MW group}

Throughout this section, $\EC/\Q$ is an elliptic curve with good supersingular reduction at $p$ and for simplicity we assume that $a_p(\EC) =0$.

\subsection{Control Theorem and \edit{s}ome basic properties}
We first record that a control theorem is known for plus/minus groups by the work of B.D.~Kim \cite[\edit{Lemma~3.9}]{BDKim13} building on the ideas of \cite{Gre03}.

\begin{theorem}[Control Theorem]
\label{Control Theorem ss}
Let $K$ be an abelian extension of $\Q$ and $p\edit{\geq}5$ be a fixed prime which is unramified in $K$.
Consider an elliptic curve $\EC/\Q$ with good supersingular reduction at $p$ and base-changed to $K$.
Then
\[
\psi_n: \Sel^{\pm}(\EC/K_{(n)}) \longrightarrow \Sel^{\pm}(\EC/K_{\cyc})^{\Gamma_n}
\]
is injective for all $n$ and the cokernel of $\psi_n$ has bounded growth as $n \to \infty$.
\end{theorem}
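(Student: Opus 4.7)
The plan is to follow the standard Mazur--Greenberg strategy for control theorems, adapted to the $\pm$ setting as in Kobayashi \cite{Kob03} and extended to abelian base fields by B.D.~Kim \cite{BDKim13}. Concretely, I would compare the defining four-term exact sequences
\[
0 \to \Sel^{\pm}(\EC/K_{(n)}) \to H^1(G_S(K_{(n)}),\EC[p^\infty]) \to \frac{H^1(\Kn,\EC[p^\infty])}{\cM^{\pm}(\EC/\Kn)} \times \prod_{w \in S(K_{(n)}), w\nmid p} H^1(K_{(n),w},\EC[p^\infty])
\]
and the analogous sequence over $K_{\cyc}$ (taking $\Gamma_n$-invariants), and apply the snake lemma to the resulting commutative diagram. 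The claim then reduces to controlling the kernels and cokernels of the three vertical restriction maps.

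For the global term, the inflation-restriction sequence reads
\[
0 \to H^1(\Gamma_n, \EC(K_{\cyc})[p^\infty]) \to H^1(G_S(K_{(n)}),\EC[p^\infty]) \to H^1(G_S(K_{\cyc}),\EC[p^\infty])^{\Gamma_n} \to H^2(\Gamma_n, \EC(K_{\cyc})[p^\infty]),
\]
and since $\EC(\K_{\cyc})[p^\infty] = 0$ at supersingular primes (as already recalled in Section~\ref{prelim: good supersingular}, citing \cite[Proposition~3.6]{Kat21}), and a parallel vanishing can be arranged globally, both flanking terms vanish, making the global restriction an isomorphism. For the prime-to-$p$ local terms, one uses that each non-$p$-adic prime of $K$ is finitely decomposed in $K_{\cyc}$ with finite local cohomology contributions, so the usual Mazur argument yields kernels and cokernels that are finite of bounded order as $n$ varies (here the hypothesis $p>5$ and unramified behavior in $K$ is convenient in order to control torsion at bad primes).

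The main obstacle, and the heart of the theorem, is the local condition at $p$: one must show that the natural map
\[
\frac{H^1(\Kn,\EC[p^\infty])}{\cM^{\pm}(\EC/\Kn)} \longrightarrow \left(\frac{H^1(\K_{\cyc},\EC[p^\infty])}{\cM^{\pm}(\EC/\K_{\cyc})}\right)^{\Gamma_n}
\]
has kernel and cokernel of size bounded independently of $n$. This is where the explicit structure of Kobayashi's plus/minus norm groups is essential: one uses the exact description of $\EC^{\pm}(\Kn)$ in terms of trace relations, together with the compatibility $\cM^{\pm}(\EC/\Kn) = \cM^{\pm}(\EC/\K_{\cyc})^{\Gamma_n}$ noted in Section~\ref{prelim: good supersingular}, to reduce the computation to controlling the cokernel on the ambient local cohomology. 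By the inflation-restriction sequence for $\K_{\cyc}/\Kn$ together with $\EC(\K_{\cyc})[p^\infty]=0$, the ambient restriction is an isomorphism, so the question collapses to understanding the $\Gamma_n$-cohomology of a quotient module that, by Kim's computation, is bounded. Assembling these three local/global inputs via the snake lemma yields injectivity of $\psi_n$ and the desired uniform bound on $\cok(\psi_n)$.
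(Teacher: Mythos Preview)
The paper does not actually prove this theorem: it is stated with the prefatory sentence ``We first record that a control theorem is known for plus/minus groups by the work of B.D.~Kim \cite{BDKim13} building on the ideas of \cite{Gre03},'' and no argument is given. So there is nothing in the paper to compare your proposal against beyond the citation.

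That said, your sketch is a faithful outline of precisely the Mazur--Greenberg--Kobayashi--Kim argument the paper is invoking. A couple of minor comments: the phrase ``a parallel vanishing can be arranged globally'' is a bit loose --- the clean way to say it is that $\EC(K_{\cyc})[p^\infty]$ injects into $\EC(\K_{\cyc})[p^\infty]=0$, so the global $\Gamma_n$-cohomology terms vanish directly. Also, at the local step above $p$ you correctly identify the crux, but note that the identification $\cM^{\pm}(\EC/\Kn)=\cM^{\pm}(\EC/\K_{\cyc})^{\Gamma_n}$ together with the isomorphism $H^1(\Kn,\EC[p^\infty])\simeq H^1(\K_{\cyc},\EC[p^\infty])^{\Gamma_n}$ already makes the kernel on the quotient vanish; the genuine work (and where \cite{BDKim13} is really needed) is bounding the \emph{cokernel}, i.e.\ controlling $H^1(\Gamma_n, H^1(\K_{\cyc},\EC[p^\infty])/\cM^{\pm}(\EC/\K_{\cyc}))$, which requires Kim's structural results on the $\pm$ local conditions over general unramified bases. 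Your sketch gestures at this but does not isolate it as the nontrivial input.
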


Let $L$ be an algebraic extension of $K$, for example $K_{(n)}$ or $K_{\cyc}$.
There exists a tautological short exact sequence
\begin{equation}
\label{eqn: tautological ss}
0 \longrightarrow \cM^{\pm}(\EC/L) \longrightarrow \Sel^{\pm}(\EC/L) \longrightarrow \Zha^{\pm}(\EC/L)[p^\infty] \longrightarrow 0.
\end{equation}

In view of the control theorem above (see Theorem~\ref{Control Theorem ss}), there exists a natural injective morphism
\[
m^{\pm}_n: \cM^{\pm}(\EC/K_{(n)}) \hookrightarrow \cM^{\pm}(\EC/K_{\cyc})^{\Gamma_n}.
\]
More information can be obtained about the map $m_n^{\pm}$ by imitating the proof of \cite[Theorem~5.5]{Lei23}.

\begin{theorem}
Let $K/\Q$ be an abelian extension and $p$ be a fixed odd prime which is unramified in $K$.
Consider an elliptic curve $\EC/\Q$ with good \emph{supersingular} reduction at $p$ with $a_p(\EC)=0$ and base-changed to $K$.
\begin{enumerate}
\item[\textup{(}i\textup{)}] There exists a pseudo-isomorphism
\begin{equation}
\label{pseudo for Mpm}
\cM^{\pm}(\EC/K_{\cyc}) \sim \bigoplus_{i=1}^{u^{\pm}} \frac{\Lambda}{\langle \Phi_{c^{\pm}_i}(x) \rangle}
\end{equation}
where $u^{\pm}$ are non-negative integers and $\Phi_{c^{\pm}_i}(x)$ are cyclotomic polynomials.
\item[\textup{(}ii\textup{)}] If $\Zha^{\pm}(\EC/K_{(n)})$ is finite then cokernel of $m_n^{\pm}$ is finite.
\end{enumerate}
\end{theorem}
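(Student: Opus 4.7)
The plan is to treat the two parts in opposite order. Part~(ii) will be a direct snake-lemma argument from the tautological exact sequence~\eqref{eqn: tautological ss} combined with the control theorem~\ref{Control Theorem ss}. Part~(i) requires a structural analysis of the Tate module $T_p\cM^{\pm}(\EC/K_{(n)})$ at each finite layer, followed by passage to the limit, in close analogy with the proof of Theorem~\ref{main result fine Selmer}.

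For part~(ii), consider the commutative diagram whose rows are the tautological short exact sequences applied to $K_{(n)}$ and to $K_{\cyc}$ (in the latter after passing to $\Gamma_n$-invariants), with vertical maps $m_n^{\pm}$, $\psi_n$, and the natural restriction $h_n$ on the $\Zha^{\pm}$ components. The snake lemma yields the exact sequence
\[
0 \longrightarrow \ker(m_n^{\pm}) \longrightarrow \ker(\psi_n) \longrightarrow \ker(h_n) \longrightarrow \cok(m_n^{\pm}) \longrightarrow \cok(\psi_n) \longrightarrow \cok(h_n).
\]
Theorem~\ref{Control Theorem ss} gives $\ker(\psi_n) = 0$ and $\cok(\psi_n)$ finite, while the hypothesis that $\Zha^{\pm}(\EC/K_{(n)})[p^\infty]$ is finite forces $\ker(h_n)$ to be finite. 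Hence $\cok(m_n^{\pm})$ is squeezed between two finite groups and is itself finite, proving~(ii).

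For part~(i), one first establishes that $\cM^{\pm}(\EC/K_{\cyc})^\vee$ is a finitely generated torsion $\Lambda$-module with vanishing $\mu$-invariant. Torsion follows by dualising~\eqref{eqn: tautological ss} over $K_{\cyc}$ and invoking the $\Lambda$-cotorsion of $\Sel^{\pm}(\EC/K_{\cyc})$ (Kobayashi, extended to abelian base changes by Kim). The vanishing of $\mu$ is immediate from the $p$-divisibility of $\cM^{\pm}(\EC/K_{\cyc}) = \EC^{\pm}(\K_{\cyc}) \otimes \Qp/\Zp$: its Pontryagin dual has no nonzero finite $\Lambda$-submodule. What remains is to show that every elementary divisor is a cyclotomic polynomial $\Phi_c(x)$ to the first power. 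Following the strategy of Theorem~\ref{main result fine Selmer}, I would study the defining sequence
\[
0 \longrightarrow T_p\cM^{\pm}(\EC/K_{(n)}) \otimes \Qp \longrightarrow \EC^{\pm}(K_{(n)})^\bullet \longrightarrow \EC^{\pm}(\Kn)^\bullet
\]
and compute the right-hand term as a $\Qp[\Gamma_n]$-module using Kobayashi's local description of the $\pm$-norm subgroups at the supersingular layers. The resulting decomposition should exhibit $T_p\cM^{\pm}(\EC/K_{(n)}) \otimes \Qp$ as a direct sum of copies of $\Qp[x]/\langle \Phi_t(x)\rangle$ for $0 \leq t \leq n$. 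Dualising and passing to the inverse limit over $n$, with part~(ii) ensuring that the natural map between $(\cM^{\pm}(\EC/K_{\cyc})^\vee)_{\Gamma_n}$ and $\cM^{\pm}(\EC/K_{(n)})^\vee$ has finite kernel and cokernel, yields the required pseudo-isomorphism.

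The main obstacle is the $\Qp[\Gamma_n]$-decomposition of $\EC^{\pm}(\Kn)^\bullet$. Unlike the good ordinary case of Lemma~\ref{Lemma 1 note 1}, where $\EC(\Kn)^\bullet \simeq \Qp[\cG_n]$, the supersingular $\pm$-subgroups have a more intricate structure controlled by Kobayashi's theory; careful bookkeeping is then required to verify that no higher powers of cyclotomic polynomials, nor any non-cyclotomic distinguished factors, arise in the final elementary divisors. The assumption that $p$ is unramified in $K$ is precisely what permits the local supersingular input to be applied layer by layer.
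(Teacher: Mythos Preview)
Your argument for part~(ii) via the snake lemma is correct and is the standard route; the paper itself gives no independent proof but refers the reader to \cite[Theorem~5.5]{Lei23}, whose argument proceeds along these same lines. One small caveat: the bottom row obtained by applying $\Gamma_n$-invariants to~\eqref{eqn: tautological ss} is only left exact, so the snake lemma terminates at $\cok(\psi_n)$ rather than continuing to $\cok(h_n)$; this is still enough to sandwich $\cok(m_n^{\pm})$ between the finite groups $\ker(h_n)$ and $\cok(\psi_n)$.

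For part~(i), however, your outline contains a genuine confusion between the local and global $\pm$ Mordell--Weil groups. The identity you invoke, $\cM^{\pm}(\EC/K_{\cyc}) = \EC^{\pm}(\K_{\cyc})\otimes\Qp/\Zp$, is the definition of the \emph{local} object $\cM^{\pm}(\EC/\K_{\cyc})$; the global $\cM^{\pm}(\EC/K_{\cyc})$ is the direct limit over $n$ of kernels of maps from $\EC(K_{(n)})\otimes\Qp/\Zp$ into a quotient involving that local object (see Section~\ref{prelim: good supersingular}). Likewise, the symbol $\EC^{\pm}(K_{(n)})$ in your proposed exact sequence is undefined: the plus/minus norm subgroups are introduced only for the semilocal points $\EC(\Kn)$, not for the global Mordell--Weil group. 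The correct analogue of~\eqref{SES proof thm 1} has $\EC(K_{(n)})^{\bullet}$ in the middle term and a quotient of $\EC(\Kn)^{\bullet}$ by the local $\pm$-contribution on the right. Separately, ``no nonzero finite $\Lambda$-submodule'' does not by itself force $\mu=0$ (e.g.\ $\Lambda/p$ has none); the relevant consequence of $p$-divisibility is that the dual is $\Zp$-torsion-free.

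Once these points are corrected, the remaining structural step is actually simpler than you indicate: since each $\cM^{\pm}(\EC/K_{(n)})$ is $p$-divisible of finite $\Zp$-corank, the space $V_p\cM^{\pm}(\EC/K_{(n)})$ is a module over the \emph{semisimple} algebra $\Qp[\Gamma_n]$ and therefore automatically decomposes as $\bigoplus_{t\le n}(\Qp[x]/\langle\Phi_t\rangle)^{r_t^{\pm}}$; any $\Zp$-lattice therein is then pseudo-isomorphic to $\bigoplus_{t\le n}(\Zp[x]/\langle\Phi_t\rangle)^{r_t^{\pm}}$, and passage to the limit finishes. No layer-by-layer computation of $\EC^{\pm}(\Kn)^{\bullet}$ via Kobayashi's theory is required to obtain~\eqref{pseudo for Mpm}.
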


We record the following corollary which will be useful for us.
This generalizes \cite[Remark~5.6]{Lei23}.

\begin{corollary}
\label{Remark 5.6 Antonio}   
With notation introduced previously and the assumption that $\Zha^{\pm}(\EC/K_{(n)})[p^\infty]$ is finite for each $n$,
\[
T_p\cM^{\pm}(\EC/K_{(n)}) = \bigoplus_{c_i^{\pm} \leq n} \frac{\Lambda}{\langle \Phi_{c^{\pm}_i}(x) \rangle}.
\]
\end{corollary}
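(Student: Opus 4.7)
The plan is to reduce the corollary to the pseudo-isomorphism in part (i) of the preceding theorem by means of the control theorem and standard duality manipulations, and then to unwind the cyclotomic ideals modulo $\omega_n = (1+x)^{p^n}-1$.

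First I would translate the control-type statement from part (ii) of the preceding theorem into a pseudo-isomorphism: since $m_n^{\pm}$ is injective with finite cokernel, Pontryagin duality yields
\[
\cM^{\pm}(\EC/K_{(n)})^{\vee} \sim \cM^{\pm}(\EC/K_{\cyc})^{\vee}/\omega_n \cM^{\pm}(\EC/K_{\cyc})^{\vee},
\]
because for a compact $\Lambda$-module $B$ the coinvariants $B/\omega_n B$ are exactly dual to the discrete $\Gamma_n$-invariants. Next I would substitute in the structural pseudo-isomorphism \eqref{pseudo for Mpm} (interpreted, as in Theorem~\ref{thm A}, on the Pontryagin dual side) to obtain
\[
\cM^{\pm}(\EC/K_{(n)})^{\vee} \sim \bigoplus_{i=1}^{u^{\pm}} \frac{\Lambda}{\langle\Phi_{c_i^{\pm}}(x),\, \omega_n\rangle}.
\]

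Now I would use the factorization $\omega_n = \prod_{k=0}^{n}\Phi_k(x)$. When $c_i^{\pm}\leq n$, $\Phi_{c_i^{\pm}}$ divides $\omega_n$ so the $i$-th summand collapses to $\Lambda/\langle\Phi_{c_i^{\pm}}(x)\rangle$, which is $\Zp$-free of rank $\phi(p^{c_i^{\pm}})$. When $c_i^{\pm}>n$, the cyclotomic polynomials $\Phi_{c_i^{\pm}}$ and $\Phi_k$ for $k\leq n$ are pairwise coprime in $\Qp[x]$, so the summand $\Lambda/\langle\Phi_{c_i^{\pm}}(x),\omega_n\rangle$ is finite and therefore pseudo-null. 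Consequently
\[
\cM^{\pm}(\EC/K_{(n)})^{\vee} \sim \bigoplus_{c_i^{\pm}\leq n} \frac{\Lambda}{\langle\Phi_{c_i^{\pm}}(x)\rangle}.
\]
Finally, I would pass to Tate modules by observing that for any cofinitely generated $\Zp$-module $A$ one has $T_p(A) = (A^{\vee})^D$ modulo the $\Zp$-torsion of $A^{\vee}$ (cf. Definition~\ref{additional notation}); since the right-hand side above is $\Zp$-free and each cyclotomic quotient $\Lambda/\langle\Phi_k(x)\rangle$ is self-$\Zp$-dual as a $\Zp[\Gamma]$-module (the involution $\iota$ preserves the ideal $\langle\Phi_k(x)\rangle$), taking $\Zp$-duals returns the same direct sum.

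The main obstacle will be turning the cascade of pseudo-isomorphisms into the honest equality that the corollary states, since passing through Pontryagin duals, taking $\omega_n$-coinvariants, and then taking $\Zp$-duals a priori only gives pseudo-isomorphisms. This is where the $T_p$ functor is doing work: it is left-exact and kills $p$-torsion and hence all finite error terms appearing along the way, so the final identification $T_p\cM^{\pm}(\EC/K_{(n)}) = \bigoplus_{c_i^{\pm}\leq n}\Lambda/\langle\Phi_{c_i^{\pm}}(x)\rangle$ holds on the nose; I would phrase the proof carefully to justify this exactness (using that the pseudo-null summands for $c_i^{\pm}>n$ are in fact finite, hence have trivial Tate module).
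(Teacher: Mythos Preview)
Your approach is correct and is precisely the argument implied by the paper (which states the corollary without proof, referring to \cite[Remark~5.6]{Lei23}): dualize the control map $m_n^{\pm}$, feed in the structural pseudo-isomorphism \eqref{pseudo for Mpm} on the dual side, reduce modulo $\omega_n$ using the factorization $\omega_n=\prod_{k\le n}\Phi_k(x)$, and discard the finite summands for $c_i^{\pm}>n$. Your handling of the dualities and the self-$\Zp$-duality of $\Lambda/\langle\Phi_k(x)\rangle$ via $\iota$-invariance is also fine.

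One small caveat on your last paragraph. The claim that the equality holds ``on the nose'' is more delicate than you indicate. Your $T_p$ argument does upgrade the control step to an honest isomorphism: since $m_n^{\pm}$ is injective with finite cokernel and $\cM^{\pm}(\EC/K_{(n)})$ is $p$-divisible, the snake lemma on $p^k$-torsion followed by inverse limit gives $T_p\cM^{\pm}(\EC/K_{(n)})\simeq T_p(\cM^{\pm}(\EC/K_{\cyc})^{\Gamma_n})$ exactly. But the second step---identifying $T_p\cM^{\pm}(\EC/K_{\cyc})$ with $\bigoplus_i \Lambda/\langle\Phi_{c_i^{\pm}}\rangle$---only comes from a pseudo-isomorphism \eqref{pseudo for Mpm}, and a pseudo-isomorphism between $\Zp$-free $\Lambda$-modules need not be an isomorphism (e.g.\ multiplication by $p$). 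So strictly speaking you obtain a pseudo-isomorphism, not an equality. This is harmless in practice: every subsequent use of Corollary~\ref{Remark 5.6 Antonio} in the paper passes to $V_p = T_p\otimes\Qp$, where the distinction evaporates, and the ``$=$'' in the corollary should be read in that spirit.
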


\begin{remark}
When $K=\Q$, $p=2,3$ and $p\mid a_p$, F.~Sprung has defined the notion of $\flat$ and $\sharp$ Selmer groups; see \cite{Spr12}.
Many of the results in this section, can be extended to the case when $p\mid a_p$ but we refrain from doing so for the sake of clarity of arguments.
\end{remark}

\subsection{\texorpdfstring{$\Lambda$}{}-structure of the plus and minus Mordell--Weil groups}

In \cite{KP07} M.~Kurihara--R.~Pollack generalized the problem posed by Greenberg to elliptic curves over $\Q$ with supersingular reduction at $p$.
In \cite[Theorem~D]{Lei23}, Lei provided an affirmative answer to this question under the hypotheses that certain plus and minus Tate--Shafarevich groups are finite and that Kobayashi’s plus and minus main conjecture holds.
The plus/minus Iwasawa main conjecture is known for CM elliptic curves \cite{PK04} and for semi-stable elliptic curves \cite{BSTW}.
In this section we study the problem for elliptic curves over a number field $K$.
As will be evident to the reader, our result is strongly influenced by the work of Lei \cite{Lei23}.
However, we want to emphasize that our main result Theorem~\ref{thm 6.13} not only generalizes the work of Lei but also provides a refinement of \cite[Theorem~D]{Lei23} when $K=\Q$.

Throughout this section we assume that $K/\Q$ is an abelian extension such that $p$ is unramified in $K$.
Further we suppose suppose that \ref{star} is satisfied for $\Gal(K/\Q) = \mathsf{G}$.

\subsubsection{}
We first state a straight-forward result which is required for the remainder of the discussion.

\begin{lemma}
\label{Antonio 5.8}
Fix integers $n \geq 0$ and $k \geq 1$.
The natural map 
\begin{align*}
\EC(K_{(n)})/p^k &\longrightarrow \EC(K_{(n)}) \otimes \Qp/\Zp \\
\mathsf{P} \ (\bmod \ p^k) & \mapsto \mathsf{P} \otimes p^{-k}
\end{align*} 
is injective.
The image is given by $(\EC(K_{(n)}) \otimes \Qp/\Zp)[p^k]$.
It induces the following isomorphisms of $\Lambda$-modules:
\begin{align*}
\cM^{\pm}_{p^k}(\EC/K_{(n)}) &\simeq \cM^{\pm}(\EC/K_{(n)})[p^k]\\
\cM_{p^k}(\EC/K_{(n)}) &\simeq \cM(\EC/K_{(n)})[p^k]
\end{align*}
\end{lemma}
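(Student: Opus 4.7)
The plan is to first establish the claim about the natural map at the level of abelian groups, and then to deduce the induced isomorphisms on (plus/minus) Mordell--Weil groups by a diagram chase.

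Stage one: the map $\EC(K_{(n)})/p^k \to \EC(K_{(n)}) \otimes \Qp/\Zp$ is well-defined because $(p^k\mathsf{Q}) \otimes p^{-k} = \mathsf{Q} \otimes (p^k \cdot p^{-k}) = 0$, and its image lies in the $p^k$-torsion since $p^k \cdot p^{-k} = 0$ in $\Qp/\Zp$. To handle injectivity and the identification of the image, I would identify $\EC(K_{(n)}) \otimes \Qp/\Zp$ with the filtered colimit $\varinjlim_m \EC(K_{(n)})/p^m$ along the multiplication-by-$p$ transition maps; this follows from $\Qp/\Zp = \varinjlim_m p^{-m}\Z/\Z$ together with the commutation of tensor product with filtered colimits. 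Under this identification, the map in the lemma is precisely the structure map $\iota_k$ of the colimit at level $k$. The crucial input is the vanishing $\EC(K_{(n)})[p^\infty] = 0$, which I would deduce from the embedding $\EC(K_{(n)}) \hookrightarrow \EC(\K_{\cyc})$ obtained by completing at a prime above $p$, combined with the supersingular fact $\EC(\K_{\cyc})[p^\infty] = 0$ from \cite[Proposition~3.6]{Kat21}. With that in hand, an element $\mathsf{P}$ mapping to $0$ in the colimit must satisfy $p^{m-k}\mathsf{P} \in p^m\EC(K_{(n)})$ for some $m \geq k$, and the absence of $p$-power torsion then forces $\mathsf{P} \in p^k\EC(K_{(n)})$, giving injectivity. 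Surjectivity onto the $p^k$-torsion is symmetric: any $p^k$-torsion element of the colimit is represented at some level $m$, and the torsion-freeness lets one rewrite it at level $k$.

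Stage two: the induced isomorphisms on the fine and signed Mordell--Weil groups follow by chasing through the definitions recorded earlier in the paper. Unwinding, $\cM^{\pm}(\EC/K_{(n)})[p^k]$ is the intersection of the $p^k$-torsion of $\EC(K_{(n)}) \otimes \Qp/\Zp$ with the kernel of the Kummer-type map into $H^1(\Kn, \EC[p^\infty])/\cM^{\pm}(\EC/\Kn)$, while $\cM^{\pm}_{p^k}(\EC/K_{(n)})$ is the kernel of the $p^k$-level analog into $H^1(\Kn, \EC[p^k])/\cM^{\pm}_{p^k}(\EC/\Kn)$. These two descriptions match once I invoke the identification $H^1(\Kn, \EC[p^k]) \simeq H^1(\Kn, \EC[p^\infty])[p^k]$, coming from the long exact sequence attached to $0 \to \EC[p^k] \to \EC[p^\infty] \xrightarrow{\times p^k} \EC[p^\infty] \to 0$ combined with the local vanishing $\EC(\Kn)[p^\infty] = 0$, together with the defining equality $\cM^{\pm}_{p^k}(\EC/\Kn) = \cM^{\pm}(\EC/\Kn)[p^k]$ already recorded in the excerpt. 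The fine (non-signed) version follows by the same argument with the plus/minus local conditions at primes above $p$ simply dropped.

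The main obstacle here is not computational but careful bookkeeping: I need to verify that, under all the simultaneous identifications, the Kummer map compatibility and the plus/minus local conditions line up precisely, so that the two kernels sit inside the diagram as corresponding sub-objects under the bijection produced in Stage one. Once this compatibility is nailed down, the isomorphism of kernels is forced, and the $\Lambda$-equivariance is automatic because every map in sight is $\Lambda$-linear.
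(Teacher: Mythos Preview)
Your proposal is correct and in substance matches what the paper does: the paper's proof consists of a single sentence, ``The proof in \cite[Proposition~5.8]{Lei23} goes through in this case,'' so it defers entirely to Lei's argument, of which your write-up is a faithful unpacking. Your identification of the key input---the vanishing $\EC(K_{(n)})[p^\infty]=0$ via the embedding into $\EC(\K_{\cyc})$ and \cite[Proposition~3.6]{Kat21}---and the colimit description of $\EC(K_{(n)})\otimes\Qp/\Zp$ are exactly the ingredients that make Lei's proof go through after base change, and your Stage~two bookkeeping with the identification $H^1(\Kn,\EC[p^k])\simeq H^1(\Kn,\EC[p^\infty])[p^k]$ and $\cM^{\pm}_{p^k}(\EC/\Kn)=\cM^{\pm}(\EC/\Kn)[p^k]$ is precisely what is needed.

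One small clarification on the fine (unsigned) case: rather than ``dropping'' the local condition at $p$, the fine Mordell--Weil group imposes the \emph{trivial} local condition there, i.e.\ $\cM(\EC/K_{(n)})$ is the kernel of the map from $\EC(K_{(n)})\otimes\Qp/\Zp$ into $H^1(\Kn,\EC[p^\infty])$ itself (not a quotient). The diagram chase you describe still works verbatim with this adjustment---indeed it is slightly simpler, since no compatibility of quotients is needed---but the phrasing should be corrected.
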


\begin{proof}
The proof in \cite[Proposition~5.8]{Lei23} goes through in this case.
\end{proof}

Set $\K'_n = K_{(n)}(\zeta_p) \otimes \Qp = K(\zeta_{p^{n+1}}) \otimes \Qp$ and $\mathcal{O}_{\K'_n}$ be the ring of integers of $\K'_n$, i.e., the integral closure of $\Zp$ in $\K'_n$.
Set $\fm_n$ to denote the Jacobson radical of $\mathcal{O}_{\K'_n}$.
Following \cite[Proposition~VII.6.3]{Sil86}, we know that there exists a short exact sequence
\[
0 \longrightarrow \hat{\EC}(\fm_n) \longrightarrow \EC(\K'_n) \longrightarrow \widetilde{\EC}(\mathcal{O}_{\K'_n}/\fm_n) \longrightarrow 0.
\]
Here $\hat{\EC}$ is the formal group law over $\Zp$ associated to a minimal Weierstrass model of $\EC$ and $\widetilde{\EC}$ denotes the$\pmod \edit{\fm_n}$ reduction of $\EC$.
Analogous to $\EC^{\pm}(\Kn)$, we may now define
\begin{align*}
\hat{\EC}^+(\fm_n) &:=
\left\{P\in \hat{\EC}(\fm_n) \mid \tr_{n/m+1} (P)\in \hat{\EC}(\fm_m), \text{ for }0\leq m < n\text{ and }m \text{ even }\right\},\\
\hat{\EC}^-(\fm_n) &:=
\left\{P\in \hat{\EC}(\fm_n) \mid \tr_{n/m+1} (P)\in \hat{\EC}(\fm_{m}),\text{ for }-1\leq m < n\text{ and }m 
\text{ odd }\right\}.
\end{align*}
In fact, $\hat{\EC}^{\pm}(\fm_n)$ is precisely the $p$-part of $\EC^{\pm}(\K'_n)$; see \cite[Definition~4.20]{Kat21}.
Define $\fm_{-1}$ to be the Jacobson radical of $\mathcal{O}_{\K_0}$ and $\hat{\EC}^{\pm}(\fm_{-1})$ be the $p$-part of $\EC(\K_0)$.

\begin{lemma}
\label{Lemma 1 note 20 May}
For $n,k \geq 0$,
\[
\cM^{+}_{p^k}(\EC/\Kn) \cap \cM^{-}_{p^k}(\EC/\Kn) = \EC(\K_0)/p^k.
\]
\end{lemma}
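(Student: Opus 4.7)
I would prove the stated equality by establishing the two inclusions separately. The easy direction is $\EC(\K_0)/p^k\subseteq \cM^+_{p^k}(\EC/\Kn)\cap\cM^-_{p^k}(\EC/\Kn)$: any $P\in\EC(\K_0)$ is fixed by $\Gal(\Kn/\K_0)$, so $\tr_{n/m+1}(P)=p^{n-m-1}\cdot P$ lies in $\EC(\K_0)\subseteq\EC(\K_m)$ for every $-1\le m<n$, satisfying both the plus and minus trace conditions simultaneously. Thus $\EC(\K_0)\subseteq\EC^+(\Kn)\cap\EC^-(\Kn)$, and reducing modulo $p^k$---identifying $\cM^{\pm}_{p^k}(\EC/\Kn)$ with $\EC^{\pm}(\Kn)/p^k\EC^{\pm}(\Kn)$ via the $p$-torsion freeness of $\EC^{\pm}(\Kn)$ noted in the excerpt, in the spirit of Lemma~\ref{Antonio 5.8}---yields the inclusion.

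For the reverse inclusion, the central structural input is the set-theoretic identity $\EC^+(\Kn)\cap\EC^-(\Kn)=\EC(\K_0)$, which I would establish by downward induction on $n$. Given $P$ in the intersection, the trace condition at $m=n-1$ (whose parity forces the use of either the plus or the minus definition) gives $P=\tr_{n/n}(P)\in\EC(\K_{n-1})$. The identity $\tr_{n/m+1}(P)=p\cdot\tr_{n-1/m+1}(P)$ then combines with $\tr_{n/m+1}(P)\in\EC(\K_m)$ to give $p\cdot\tr_{n-1/m+1}(P)\in\EC(\K_m)$, and the absence of $p$-torsion in $\EC(\K_{m+1})$ (from supersingular reduction combined with $\zeta_p\notin\K_{m+1}$) upgrades this to $\tr_{n-1/m+1}(P)\in\EC(\K_m)$. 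Hence $P\in\EC^+(\K_{n-1})\cap\EC^-(\K_{n-1})$, and iterating downward forces $P\in\EC(\K_0)$.

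To promote this to $p^k$-torsion, I would apply $-\otimes_{\Z}\Z/p^k\Z$ to the tautological short exact sequence
\[
0\longrightarrow\EC^+(\Kn)\cap\EC^-(\Kn)\longrightarrow\EC^+(\Kn)\oplus\EC^-(\Kn)\longrightarrow\EC^+(\Kn)+\EC^-(\Kn)\longrightarrow 0.
\]
The rightmost term embeds into $\EC(\Kn)$ and so is $p$-torsion free, so the Tor term vanishes and $(\EC^+(\Kn)\cap\EC^-(\Kn))/p^k$ appears as the kernel of $(\EC^+(\Kn)/p^k)\oplus(\EC^-(\Kn)/p^k)\to(\EC^+(\Kn)+\EC^-(\Kn))/p^k$. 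A diagram chase then identifies $\cM^+_{p^k}(\EC/\Kn)\cap\cM^-_{p^k}(\EC/\Kn)$ inside $\cM_{p^k}(\EC/\Kn)\simeq\EC(\Kn)/p^k\EC(\Kn)$ with $\EC(\K_0)/p^k$. The principal obstacle I anticipate is matching the two kernels at the end, which requires $\EC(\Kn)/(\EC^+(\Kn)+\EC^-(\Kn))$ to be $p$-torsion free; for $K=\Q$ this is implicit in Kobayashi's local computations \cite{Kob03}, and in the general unramified abelian setting should follow from the framework of \cite{BDKim13,Kat21}.
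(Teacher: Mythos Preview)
Your approach is genuinely different from the paper's. The paper does not prove $\EC^+(\Kn)\cap\EC^-(\Kn)=\EC(\K_0)$ by hand; instead it works throughout at the $\otimes\,\Qp/\Zp$ level and quotes \cite[Proposition~4.21(2)]{Kat21} to obtain the short exact sequence
\[
0\longrightarrow \EC(\K_0)\otimes\Qp/\Zp\longrightarrow \EC^{+}(\K'_n)\otimes\Qp/\Zp\;\oplus\;\EC^{-}(\K'_n)\otimes\Qp/\Zp\xrightarrow{\ \delta\ }\EC(\K'_n)\otimes\Qp/\Zp\longrightarrow 0
\]
over the field with $\zeta_p$ adjoined, then squeezes $\cM^{+}(\EC/\K_\cyc)\cap\cM^{-}(\EC/\K_\cyc)$ between $\EC(\K_0)\otimes\Qp/\Zp$ and the intersection over $\K'_\cyc$, takes $\Gamma_n$-invariants, and finally $p^k$-torsion. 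No integral trace-condition induction and no Tor argument is used.

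There is, however, a genuine gap in your route. The identification $\cM^{\pm}_{p^k}(\EC/\Kn)\simeq\EC^{\pm}(\Kn)/p^k$ is \emph{not} what Lemma~\ref{Antonio 5.8} says: that lemma concerns the \emph{global} groups $\cM^{\pm}_{p^k}(\EC/K_{(n)})$, whereas the local object $\cM^{\pm}(\EC/\Kn)$ is \emph{defined} in the paper as $\bigl(\EC^{\pm}(\K_\cyc)\otimes\Qp/\Zp\bigr)^{\Gamma_n}$, not as $\EC^{\pm}(\Kn)\otimes\Qp/\Zp$. Equating the two is a local control statement that is not automatic from $p$-torsion freeness alone; it requires knowing that $\EC^{\pm}(\K_\cyc)/\EC^{\pm}(\Kn)$ has no $p$-torsion and that the $\Gamma_n$-coinvariants behave well, which is precisely the content of the Kobayashi--Kim--Kataoka structural description of $\hat{\EC}^{\pm}(\fm_n)$. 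Likewise, the ``principal obstacle'' you flag at the end (that $\EC(\Kn)/(\EC^+(\Kn)+\EC^-(\Kn))$ be $p$-torsion free) and the injectivity of $\EC^{\pm}(\Kn)/p^k\to\EC(\Kn)/p^k$ needed for your diagram chase both reduce to the same structural input. In other words, the elementary induction you carry out is correct, but to close the argument you must invoke exactly the results from \cite{Kat21} that the paper cites directly; once those are on the table, the paper's route via Proposition~4.21(2) is shorter and avoids the extra bookkeeping.
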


\begin{proof}
By \cite[Proposition~3.10]{Kat21} and the observation that $\hat{\EC}^{\pm}(\fm_n)$ is precisely the $p$-part of $\EC^{\pm}(\K'_n)$ we have
\begin{align*}
\hat{\EC}^{\pm}(\fm_n) \otimes \Qp/\Zp &\simeq \EC^{\pm}(\K'_n) \otimes \Qp/\Zp \text{ for all }n \geq 0\\
\hat{\EC}^{\pm}(\fm_{-1}) \otimes \Qp/\Zp &\simeq \EC(\K_0) \otimes \Qp/\Zp.
\end{align*}
Applying \cite[Proposition~4.21(2)]{Kat21} we see that
\[
0 \longrightarrow \EC(\K_0) \otimes \Qp/\Zp \longrightarrow \EC^{+}(\K'_n) \otimes \Qp/\Zp \oplus \EC^{-}(\K'_n) \otimes \Qp/\Zp \xrightarrow{\delta} \EC(\K'_n) \otimes \Qp/\Zp \longrightarrow 0,
\]
where the map $\delta$ is given by $(x,y) \mapsto x-y$.
Note that the first map is injective because $\EC(\K_{\cyc})[p^\infty]$ is trivial.
It follows from the definition of the map $\delta$ that
\[
\EC(\K_0) \otimes \Qp/\Zp = \EC^{+}(\K'_{\cyc}) \otimes \Qp/\Zp \cap \EC^{-}(\K'_{\cyc}) \otimes \Qp/\Zp.
\]
On the other hand, the definition of $\cM^{\pm}(\EC/\K_{\cyc})$ implies that
\[
\EC(\K_0) \otimes \Qp/\Zp \subseteq \cM^{+}(\EC/\K_{\cyc}) \cap \cM^{-}(\EC/\K_{\cyc}) \subseteq \EC^{+}(\K'_{\cyc}) \otimes \Qp/\Zp \cap \EC^{-}(\K'_{\cyc}) \otimes \Qp/\Zp.
\]
It is now clear that 
\[
\EC(\K_0) \otimes \Qp/\Zp = \cM^{+}(\EC/\K_{\cyc}) \cap \cM^{-}(\EC/\K_{\cyc}).
\]
Taking $\Gamma_n$-invariants we obtain
\[
\EC(\K_0) \otimes \Qp/\Zp = \cM^{+}(\EC/\K_{(n)}) \cap \cM^{-}(\EC/\K_{(n)}).
\]
The result follows by taking $p^k$-torsion.
\end{proof}

The reason we have to impose the condition that $p$ is unramified in $K$ is because of the above proposition which uses the result of Kataoka who works in that setting.

\begin{lemma}
\label{Lemma 2 note 20 May}
With notation introduced previously.
\begin{enumerate}
\item[\textup{(}i\textup{)}] $\cM^{\pm}_{p^k}(\EC/K) = \EC(K)/p^k$.
\item[\textup{(}ii\textup{)}] $V_p(\cM^{+}(\EC/K_{(n)}) \cap \cM^{-}(\EC/K_{(n)}))[\Phi_n(x)] = V_p \cM(\EC/K_{(n)})[\Phi_n(x)]$, when $n>0$.
\end{enumerate}
\end{lemma}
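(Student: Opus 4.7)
Both parts reduce quickly to Lemma~\ref{Lemma 1 note 20 May} combined with short formal computations.

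For part (i), the plan is to show that the defining map
\[
\EC(K)/p^k \longrightarrow H^1(\K_0, \EC[p^k])\big/\cM^{\pm}_{p^k}(\EC/\K_0)
\]
is identically zero, whence its kernel is all of $\EC(K)/p^k$. This map factors as $\EC(K)/p^k \to \EC(\K_0)/p^k$ followed by the Kummer injection $\EC(\K_0)/p^k \hookrightarrow H^1(\K_0, \EC[p^k])$, so it suffices to exhibit the containment $\EC(\K_0)/p^k \subseteq \cM^{\pm}_{p^k}(\EC/\K_0)$. But this is immediate from Lemma~\ref{Lemma 1 note 20 May} specialised to $n=0$: the equality $\cM^{+}_{p^k}(\EC/\K_0) \cap \cM^{-}_{p^k}(\EC/\K_0) = \EC(\K_0)/p^k$ in particular puts $\EC(\K_0)/p^k$ inside each of $\cM^{\pm}_{p^k}(\EC/\K_0)$.

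For part (ii), the inclusion $V_p\cM(\EC/K_{(n)})[\Phi_n(x)] \subseteq V_p\bigl(\cM^{+}(\EC/K_{(n)}) \cap \cM^{-}(\EC/K_{(n)})\bigr)[\Phi_n(x)]$ is immediate from $\cM \subseteq \cM^{\pm}$. For the reverse, I take $y$ in the right-hand side and let $z \in \EC(\Kn)\otimes\Qp$ denote its image under Kummer localisation. Passing to $V_p$ of Lemma~\ref{Lemma 1 note 20 May} (more precisely, of the divisible-group version $\cM^{+}(\EC/\Kn) \cap \cM^{-}(\EC/\Kn) = \EC(\K_0)\otimes\Qp/\Zp$ established in its proof), $z$ lies in the $\Qp$-subspace $\EC(\K_0)\otimes\Qp$. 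The key observation is that $\Gamma$ fixes $\K_0 = K \otimes \Qp$ pointwise, so the topological generator $\gamma$ acts trivially on $\EC(\K_0)\otimes\Qp$ and consequently $\Phi_n(x)$ acts there as the scalar $\Phi_n(0) = p$. Since Kummer is $\Lambda$-equivariant, the hypothesis $\Phi_n(x)y = 0$ forces $pz = \Phi_n(x)z = 0$, and as $\EC(\K_0)\otimes\Qp$ is a $\Qp$-vector space this gives $z = 0$. Therefore $y \in V_p\cM(\EC/K_{(n)})$.

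The only non-trivial input is Lemma~\ref{Lemma 1 note 20 May}, already in hand. The rest is an unwinding of definitions together with the scalar identity $\Phi_n(0) = p$, so there is no serious obstacle beyond correctly identifying the Kummer target inside $\EC(\Kn)\otimes\Qp$ and verifying that $V_p$ interchanges the intersection of $\cM^{+}$ and $\cM^{-}$ with the intersection of their $V_p$'s (which follows from left-exactness of $T_p$).
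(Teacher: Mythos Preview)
Your proposal is correct and follows essentially the same route as the paper. For (i) you invoke Lemma~\ref{Lemma 1 note 20 May} at $n=0$ to get $\EC(\K_0)/p^k \subseteq \cM^{\pm}_{p^k}(\EC/\K_0)$, whereas the paper obtains this inclusion directly from the definition of $\EC^{\pm}(\K_n)$; both then finish identically by unwinding the defining kernel. For (ii) the paper packages the same argument as a comparison of two four-term exact sequences with common cokernel map $\eta_n$, while you do the equivalent element-chase: in both cases the crux is Lemma~\ref{Lemma 1 note 20 May} (identifying the local intersection with $\EC(\K_0)$) together with the fact that $\Phi_n(x)$ acts invertibly on $\EC(\K_0)^\bullet$ for $n\ge 1$, which is exactly your observation $\Phi_n(0)=p$.
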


\begin{proof}
This is an analogue of \cite[Lemma~6.3]{Lei23}.
\begin{enumerate}
\item[\textup{(}i\textup{)}]
By definition of $\EC^{\pm}(\K_{\cyc})$, we observe that
\[
\EC(\K_0) \otimes \Qp/\Zp \subseteq (\EC^{\pm}(\K_{\cyc}) \otimes \Qp/\Zp)^{\Gamma} = \cM^{\pm}(\EC/\K_0).
\]
Now, we can take $p^k$-torsion and use Lemma~\ref{Antonio 5.8} to conclude that $\EC(\K_0)/p^k \subseteq \cM^{\pm}_{p^k}(\EC/\K_0)$.

Next, we use the definition of the plus/minus Mordell--Weil group,
\begin{align*}
\cM^{\pm}_{p^k}(\EC/K) &:= \ker\left( \EC(K)/p^k \longrightarrow \frac{H^1(\K_0, \EC[p^k])}{\cM_{p^k}^{\pm}(\EC/\K_0)} \right) \\
& \supseteq \ker\left( \EC(K)/p^k \longrightarrow \frac{H^1(\K_0, \EC[p^k])}{\EC(\K_0)/p^k} \right) \\
&= \EC(K)/p^k.
\end{align*}
The reverse inclusion is automatic from the definition.
This completes the proof of the claim.

\item[\textup{(}ii\textup{)}] 
By definition of plus/minus Mordell--Weil groups
\begin{align*}
\cM^{+}_{p^k}(\EC/K_{(n)}) \cap \cM^{-}_{p^k}(\EC/K_{(n)})&:= \ker\left( \EC(K_{(n)})/p^k \longrightarrow \frac{H^1(\Kn, \EC[p^k])}{\cM_{p^k}^{+}(\EC/\Kn) \cap \cM_{p^k}^{-}(\EC/\Kn)} \right) \\
& = \ker\left( \EC(K_{(n)})/p^k \longrightarrow \frac{H^1(\Kn, \EC[p^k])}{\EC(\K_0)/p^k} \right) \text{ by Lemma}~\ref{Lemma 1 note 20 May} \\
&= \ker\left( \EC(K_{(n)})/p^k \longrightarrow \frac{\EC(\Kn)/p^k}{\EC(\K_0)/p^k} \right).
\end{align*}
Writing this as an exact sequence,
\[
0 \longrightarrow \cM^{+}_{p^k}(\EC/K_{(n)}) \cap \cM^{-}_{p^k}(\EC/K_{(n)}) \longrightarrow \EC(K_{(n)})/p^k \longrightarrow \frac{\EC(\Kn)/p^k}{\EC(\K_0)/p^k}
\]
and taking projective limit (over $k$) and tensoring with $\Qp$ yields
\[
0 \longrightarrow  V_p(\cM^{+}(\EC/K_{(n)}) \cap \cM^{-}(\EC/K_{(n)})) \longrightarrow \EC(K_{(n)})^{\bullet} \longrightarrow \frac{\EC(\Kn)^{\bullet}}{\EC(\K_0)^{\bullet}}.
\]
For $n\geq 1$, this induces the exact sequence
\[
0 \longrightarrow  V_p(\cM^{+}(\EC/K_{(n)}) \cap \cM^{-}(\EC/K_{(n)}))[\Phi_n(x)] \longrightarrow \EC(K_{(n)})^{\bullet}[\Phi_n(x)] \xrightarrow{\eta_n} \EC(\Kn)^{\bullet}[\Phi_n(x)],
\]
where the map $\eta_n$ is the one described in Lemma~\ref{lemma 3 note 1}.
On the other hand, if we reconsider \eqref{SES proof thm 1} and take $\Phi_n$-torsion, we obtain
\[
0 \longrightarrow V_p\cM(\EC/K_{(n)})[\Phi_n(x)] \longrightarrow \EC(K_{(n)})^{\bullet}[\Phi_n(x)] \xrightarrow{\eta_n} \EC(\Kn)^{\bullet}[\Phi_n(x)].
\]
Comparing the kernel of $\eta_n$ gives the desired result. \qedhere
\end{enumerate}
\end{proof}

Before we continue the discussion, we want to introduce a new notation.
We set $r_n^{\pm}$ to be the exponent of $\frac{\Lambda}{\Phi_n(x)}$ appearing in the pseudo-isomorphism \eqref{pseudo for Mpm}.

\begin{proposition}
Keep the notation introduced previously and the assumption that $\Zha(\EC/K_{(n)})[p^\infty]$ and $\Zha^{\pm}(\EC/K_{(n)})[p^\infty]$ are finite for each $n$.
Then
\[
r_0^+ = r_0^- = e_0.
\]
\end{proposition}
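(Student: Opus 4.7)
The plan is to identify $r_0^{\pm}$ as $\dim_{\Qp} V_p\cM^{\pm}(\EC/K)$ and then show that this dimension equals $e_0 = \rk_{\Z}\EC(K)$.

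For the first identification, I would apply Corollary~\ref{Remark 5.6 Antonio} at $n=0$. Since $\Phi_0(x) = x$ and all the $c_i^{\pm}$ are non-negative integers, only the terms with $c_i^{\pm}=0$ survive, and each contributes a factor $\Lambda/\langle x\rangle \simeq \Zp$. Thus $T_p\cM^{\pm}(\EC/K) \simeq \Zp^{r_0^{\pm}}$, and tensoring with $\Qp$ yields $\dim_{\Qp} V_p\cM^{\pm}(\EC/K) = r_0^{\pm}$.

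For the second identification, I would combine Lemma~\ref{Antonio 5.8} and Lemma~\ref{Lemma 2 note 20 May}(i) to obtain
\[
\cM^{\pm}(\EC/K)[p^k] \simeq \cM^{\pm}_{p^k}(\EC/K) = \EC(K)/p^k.
\]
The only point requiring care is that the $\times p$ transition maps on the $p^k$-torsion correspond under these isomorphisms to the natural projections $\EC(K)/p^k \twoheadrightarrow \EC(K)/p^{k-1}$. This is transparent from the explicit formula $\mathsf{P} \pmod{p^k} \mapsto \mathsf{P}\otimes p^{-k}$ given in Lemma~\ref{Antonio 5.8}, since multiplication by $p$ sends $\mathsf{P}\otimes p^{-k}$ to $\mathsf{P}\otimes p^{-(k-1)}$. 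Passing to the inverse limit then gives $T_p\cM^{\pm}(\EC/K) \simeq \EC(K)^*$, and tensoring with $\Qp$ yields $V_p\cM^{\pm}(\EC/K) \simeq \EC(K)^{\bullet}$, whose $\Qp$-dimension is $\rk_{\Z}\EC(K) = e_0$.

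Combining the two steps gives $r_0^+ = r_0^- = e_0$, the equality holding uniformly for both signs because Lemma~\ref{Lemma 2 note 20 May}(i) applies identically to $+$ and $-$. I do not anticipate a serious obstacle here: the result reflects the collapse of the $\pm$-structures at the base layer $K_{(0)} = K$, and all the structural inputs needed are already in place from the preceding lemmas. The only mildly delicate bookkeeping is the compatibility of transition maps, which reduces to a one-line calculation.
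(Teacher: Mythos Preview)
Your proposal is correct and follows essentially the same approach as the paper: both apply Corollary~\ref{Remark 5.6 Antonio} at $n=0$ to identify $r_0^{\pm}$ with $\dim_{\Qp} V_p\cM^{\pm}(\EC/K)$, and then invoke Lemma~\ref{Lemma 2 note 20 May}(i) to compute this as $\dim_{\Qp}\EC(K)^{\bullet}=e_0$. Your treatment is slightly more detailed in checking the compatibility of transition maps in the inverse limit, but the underlying argument is the same.
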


\begin{proof}
Using Corollary~\ref{Remark 5.6 Antonio} we know that for $n\geq 0$
\[
V_p \cM^{\pm}(\EC/K_{(n)}) = \bigoplus_{j=0}^n \left( \frac{\Qp[x]}{\langle \Phi_j(x) \rangle}\right)^{r_j^{\pm}}.
\]
On the other hand, it follows from Lemma~\ref{Lemma 2 note 20 May}(i) that
\[
V_p\cM^{\pm}(\EC/K) = \EC(K)^{\bullet} = \Qp^{e_0}.
\]
The claim follows immediately.
\end{proof}

\subsubsection{}
The goal of this section is to study the sum of plus and minus Mordell--Weil groups.

\begin{lemma}
\label{bounded growth}
Fix $n \geq 0$.
The quotient $\frac{\EC(K_{(n)})/p^k}{\cM^{+}_{p^k}(\EC/K_{(n)}) + \cM^{-}_{p^k}(\EC/K_{(n)}) }$ has finite order bounded independently of $k$.
\end{lemma}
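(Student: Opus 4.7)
The plan is to adapt the strategy for the analogous statement in \cite{Lei23} (the case $K=\Q$) to our more general abelian base field, proceeding in two steps: a semi-local analysis over $\Kn$ using Kataoka's short exact sequence, followed by descent to the global group $\EC(K_{(n)})/p^k$. First, I would invoke Kataoka's short exact sequence as used in the proof of Lemma~\ref{Lemma 1 note 20 May}; after passing to the cyclotomic limit it reads
\[
0 \longrightarrow \EC(\K_0) \otimes \Qp/\Zp \longrightarrow \cM^+(\EC/\K_{\cyc}) \oplus \cM^-(\EC/\K_{\cyc}) \xrightarrow{(a,b) \mapsto a-b} \EC(\K_{\cyc}) \otimes \Qp/\Zp \longrightarrow 0,
\]
so that $\cM^+(\EC/\K_{\cyc}) + \cM^-(\EC/\K_{\cyc}) = \EC(\K_{\cyc}) \otimes \Qp/\Zp$. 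Taking $\Gamma_n$-invariants, the resulting cokernel is controlled by $H^1(\Gamma_n, \EC(\K_0) \otimes \Qp/\Zp) = \operatorname{Hom}(\Gamma_n, \EC(\K_0) \otimes \Qp/\Zp)$ (using the trivial action of $\Gamma_n$ on $\EC(\K_0)$), which has order at most $p^{n \cdot e_0}$---finite and, crucially, independent of $k$. Taking $p^k$-torsion then shows that $\cM^+_{p^k}(\EC/\Kn) + \cM^-_{p^k}(\EC/\Kn)$ has bounded index inside $\EC(\Kn)/p^k$.

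Second, I would descend to $\EC(K_{(n)})/p^k$ via the natural Kummer-style map $\kappa_n \colon \EC(K_{(n)}) \otimes \Qp/\Zp \to \EC(\K_{\cyc}) \otimes \Qp/\Zp$. By the definition recorded in the excerpt, $\cM^{\pm}(\EC/K_{(n)}) = \kappa_n^{-1}(\cM^{\pm}(\EC/\K_{\cyc}))$, so in particular $\ker(\kappa_n) \subseteq \cM^+(\EC/K_{(n)}) \cap \cM^-(\EC/K_{(n)})$. Given $y \in \EC(K_{(n)}) \otimes \Qp/\Zp$, the semi-local analysis yields a decomposition $\kappa_n(y) = a - b$ (up to a bounded error) with $a \in \cM^+(\EC/\K_{\cyc})$ and $b \in \cM^-(\EC/\K_{\cyc})$; this decomposition is ambiguous precisely modulo $\cM^+(\EC/\K_{\cyc}) \cap \cM^-(\EC/\K_{\cyc}) = \EC(\K_0) \otimes \Qp/\Zp$, by Lemma~\ref{Lemma 1 note 20 May}. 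I would use this ambiguity to arrange that both $a$ and $b$ lie in the image of $\kappa_n$, with the obstruction once again measured by a subquotient of $H^1(\Gamma_n, \EC(\K_0) \otimes \Qp/\Zp)$; since $\ker(\kappa_n)$ lies in both $\cM^{\pm}(\EC/K_{(n)})$, any residual error may be absorbed, yielding $y \in \cM^+(\EC/K_{(n)}) + \cM^-(\EC/K_{(n)})$ modulo a bounded group. Taking $p^k$-torsion then gives the claimed bound.

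The main obstacle I anticipate is this descent step, namely controlling the joint obstruction to both $a$ and $b$ lying in the image of $\kappa_n$. Making this precise requires a careful snake-lemma-style diagram chase involving the $\Gamma_n$-cohomology of Kataoka's sequence and the map $\kappa_n$. In the treatment for $K = \Q$, the smallness of $\EC(\K_0) \otimes \Qp/\Zp$ (of $\Zp$-corank equal to $e_0$) makes the control almost automatic; in the present setting the analogous ambient group has $\Zp$-corank bounded by $\rk_{\Z} \EC(K)$, still a constant independent of $k$, so the same style of bound should go through.
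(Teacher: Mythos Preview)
Your approach is genuinely different from the paper's and, as written, the descent step contains a real gap. The paper does not use Kataoka's exact sequence or any cohomology here; instead it works directly with the $\Gamma_n$-action on the \emph{global} group $\EC(K_{(n)})$. Concretely, it sets
\[
\widetilde{\omega}_n^- = \prod_{\substack{1\leq i\leq n\\ i\ \mathrm{odd}}} \Phi_i(x), \qquad
\widetilde{\omega}_n^+ = \prod_{\substack{0\leq i\leq n\\ i\ \mathrm{even}}} \Phi_i(x),
\]
checks by a trace computation that $\widetilde{\omega}_n^{\mp}\cdot \mathsf{Q}$ localizes into $\EC^{\pm}(\Kn)$ for any $\mathsf{Q}\in \EC(K_{(n)})$, and then uses a B\'ezout identity $A\widetilde{\omega}_n^+ + B\widetilde{\omega}_n^- = p^m$ in $\Zp[x]$ to split $p^m\mathsf{Q}$ \emph{globally} as $\mathsf{P}^- + \mathsf{P}^+$ with $\mathsf{P}^{\pm}\in \cM^{\pm}_{p^k}(\EC/K_{(n)})$. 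The point is that the decomposition is produced inside $\EC(K_{(n)})$ from the start, so no descent from $\Kn$ back to $K_{(n)}$ is needed.

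Your proposed descent is where things break. After writing $\kappa_n(y)=a-b$ with $a\in\cM^+(\EC/\Kn)$ and $b\in\cM^-(\EC/\Kn)$, you need $a$ (up to an element of $\EC(\K_0)\otimes\Qp/\Zp$) to lie in $\image(\kappa_n)$. But the relevant obstruction group is a quotient of $\cM^+(\EC/\Kn)$ by $\image(\kappa_n)+\EC(\K_0)\otimes\Qp/\Zp$, not a subquotient of $H^1(\Gamma_n,\EC(\K_0)\otimes\Qp/\Zp)$; the latter only measures the defect of $\Gamma_n$-invariants in Kataoka's sequence, which is a semi-local statement saying nothing about $\image(\kappa_n)$. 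Since the $\Zp$-corank of $\cM^+(\EC/\Kn)$ is of order $[K:\Q]\cdot p^n$ while that of $\image(\kappa_n)+\EC(\K_0)\otimes\Qp/\Zp$ is only $\rk_{\Z}\EC(K_{(n)})+[K:\Q]$, this obstruction group is typically infinite, and you have given no argument constraining the specific class of $a$. The paper's trick circumvents this entirely: because $\widetilde{\omega}_n^{\pm}$ act compatibly on the global and semi-local groups, the decomposition $p^m\mathsf{Q}=A\widetilde{\omega}_n^+\mathsf{Q}+B\widetilde{\omega}_n^-\mathsf{Q}$ is automatically global.
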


\begin{proof}
The proof is modelled on \cite[Lemma~6.5]{Lei23} but we provide the details for the convenience of the reader.
We begin with the observation that for $\mathsf{Q} \in \EC(\Kn)$, we have
\[
\tr_{n/m+1}(\mathsf{Q}) = \left(\prod_{i=m+2}^n \Phi_i(x)\right) \cdot \mathsf{Q}.
\]
Here we are using the identification of $\Lambda_n = \Zp[\cG_n]$ and $\frac{\Zp[\sG][x]}{\langle (1+x)^{p^n} - 1\rangle}$ obtained by sending $\gamma_n$ to $x+1$ where $\gamma_n$ is the image of $\gamma$ (the topological generator of $\Gamma$) in $\Lambda_n$.
Define
\[
\widetilde{\omega}_n^- = \prod_{\substack{i=1\\ i \text{ odd}}}^n \Phi_i(x) \text{ and } \widetilde{\omega}_n^+ = \prod_{\substack{i=0\\ i \text{ even}}}^n \Phi_i(x).
\]

\noindent \emph{Claim:} $\widetilde{\omega}_n^- \cdot \mathsf{Q} \in \EC^{+}(\K_n)$.

\noindent \emph{Justification:} By definition, it suffices to show that $\tr_{n/m+1}(\widetilde{\omega}_n^- \cdot \mathsf{Q}) \in \EC(\K_m)$ where $m$ is even.
We carry out the following computation when $m$ is even
\begin{align*}
    \tr_{n/m+1}(\widetilde{\omega}_n^- \cdot \mathsf{Q}) & = \left(\prod_{i=m+2}^n \Phi_i(x)\right) \left(\prod_{\substack{i=1\\ i \text{ odd}}}^n \Phi_i(x)\right)  \cdot\mathsf{Q} \\
    & = \left(\prod_{i=m+1}^n \Phi_i(x)\right) \left(\prod_{\substack{i=1\\ i\neq m+1\\i \text{ odd}}}^n \Phi_i(x)\right) \mathsf{Q}\\
    & = \tr_{n/m}(\mathsf{Q}') \in \EC(\K_m).
\end{align*}
This proves the claim.

In exactly the same way we can now show that $\widetilde{\omega}_n^+ \cdot \mathsf{Q} \in \EC^{-}(\K_n)$.
Since $\gcd(\widetilde{\omega}_n^+ , \widetilde{\omega}_n^-) = 1$, we know that there exist polynomials $A,B \in \Zp[x]$ and a positive integer $m$ such that
\[
A\widetilde{\omega}_n^+ + B \widetilde{\omega}_n^- = p^m.
\]
If $\mathsf{Q} \in \EC(K_{(n)})/p^k$ for some $k$ then we can further write
\[
\mathsf{P} : = p^m \mathsf{Q} = A\widetilde{\omega}_n^+ \mathsf{Q} + B \widetilde{\omega}_n^-\mathsf{Q} =: \mathsf{P}^- + \mathsf{P}^+.
\]
Now we take localization of $\mathsf{P}^{\pm}$ at $p$ and note that
\[
\operatorname{loc}_p(\mathsf{P}^{\pm}) \in \EC^{\pm}(\Kn) \subseteq \cM^{\pm}(\Kn).
\]
This forces that $\mathsf{P}^{\pm} \in \cM^{\pm}_{p^k}(K_{(n)})$.
The quotient is thus bounded by $p^m \rk_{\Z}(\EC(K_{(n)})$.
\end{proof}

\begin{proposition}
\label{prop 6.10}
With notation and assumptions introduced previously, 
\[
r_n^+ + r_n^- = 2e_n - \theta_n.
\]
\end{proposition}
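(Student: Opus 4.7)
The plan is to run an inclusion–exclusion argument on the subgroups $\cM^+$, $\cM^-$ of $\EC(K_{(n)}) \otimes \Qp/\Zp$ (abbreviating $\cM^\pm := \cM^\pm(\EC/K_{(n)})$), apply the functor $V_p$, then take $\Phi_n(x)$-torsion, and finally count $\Qp$-dimensions. By Corollary~\ref{Remark 5.6 Antonio} one has $V_p\cM^\pm \simeq \bigoplus_{j\leq n}(\Qp[x]/\langle\Phi_j(x)\rangle)^{r_j^\pm}$, and since each $\Phi_j(x)$ is irreducible over $\Qp$ by \ref{star}, these are semisimple $\Qp[x]$-modules whose $\Phi_n(x)$-isotypic component has $\Qp$-dimension $r_n^\pm\,\phi(p^n)$. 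Starting from the tautological short exact sequence of cofinitely generated $\Zp$-modules
\[
0 \longrightarrow \cM^+\cap\cM^- \longrightarrow \cM^+\oplus\cM^- \longrightarrow \cM^++\cM^- \longrightarrow 0,
\]
I would then read off the additive identity
\[
(r_n^+ + r_n^-)\phi(p^n) = \dim_{\Qp} V_p(\cM^+\cap\cM^-)[\Phi_n(x)] + \dim_{\Qp} V_p(\cM^++\cM^-)[\Phi_n(x)].
\]

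To finish, I would evaluate the two pieces on the right using results already proven in the paper. The intersection is controlled by Lemma~\ref{Lemma 2 note 20 May}(ii), which gives $V_p(\cM^+\cap\cM^-)[\Phi_n(x)] = V_p\cM(\EC/K_{(n)})[\Phi_n(x)]$, and the dimension of the latter is $(e_n - \theta_n)\phi(p^n)$ by the computation performed in the proof of Theorem~\ref{main result fine Selmer} (which established $s_n + \theta_n = e_n$). For the sum, Lemma~\ref{bounded growth} says that the cokernel of $\cM^+_{p^k} + \cM^-_{p^k} \hookrightarrow \EC(K_{(n)})/p^k$ is bounded independently of $k$; passing to the direct limit, $\cM^+ + \cM^-$ has finite index in $\EC(K_{(n)}) \otimes \Qp/\Zp$, so exactness of $V_p$ gives $V_p(\cM^+ + \cM^-) \simeq \EC(K_{(n)})^\bullet$, whose $\Phi_n(x)$-torsion has dimension $e_n\,\phi(p^n)$ by the Claim in the proof of Theorem~\ref{main result fine Selmer}. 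Combining yields $(r_n^+ + r_n^-)\phi(p^n) = (e_n - \theta_n)\phi(p^n) + e_n\,\phi(p^n)$, giving the stated identity $r_n^+ + r_n^- = 2e_n - \theta_n$.

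The main technical subtlety is the bookkeeping around the two functors in play. One needs: (i) exactness of $V_p$ on cofinitely generated $\Zp$-modules (standard, via Pontryagin duality composed with $\otimes_{\Zp}\Qp$); (ii) exactness of the $\Phi_n(x)$-isotypic projection, which is guaranteed by the semisimplicity of the $\Qp[x]$-modules appearing, itself a consequence of \ref{star}; and (iii) the fact that a bounded-index inclusion of cofinitely generated $\Zp$-modules becomes an isomorphism after $V_p$, which feeds the identification $V_p(\cM^++\cM^-) \simeq \EC(K_{(n)})^\bullet$. None of these are hard in isolation, but verifying them is what makes the single-line dimension count rigorous; once they are in place, the entire proposition follows from the inclusion–exclusion identity above.
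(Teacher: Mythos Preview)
Your proposal is correct and follows essentially the same route as the paper: the paper sets up the inclusion--exclusion sequence at the $p^k$-level (with target $\EC(K_{(n)})/p^k$), passes to the inverse limit and tensors with $\Qp$, and then invokes Lemma~\ref{bounded growth}, Lemma~\ref{Lemma 2 note 20 May}(ii), Corollary~\ref{Remark 5.6 Antonio}, and Theorem~\ref{main result fine Selmer} to read off $r_n^+ + r_n^- = s_n + e_n = 2e_n - \theta_n$. Your version simply reorganizes this by working at the $\Qp/\Zp$-level and applying $V_p$ directly, which is an equivalent packaging of the same argument.
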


\begin{proof}
This proof is adapted from \cite[Corollary~6.7]{Lei23}.
We consider the exact sequence
\[
0 \longrightarrow \cM^{+}_{p^k}(\EC/K_{(n)}) \cap \cM^{-}_{p^k}(\EC/K_{(n)}) \longrightarrow \cM^{+}_{p^k}(\EC/K_{(n)}) \oplus \cM^{-}_{p^k}(\EC/K_{(n)}) \longrightarrow \EC(K_{(n)})/p^k
\]
where the first map is given by diagonal embedding and the second map is given by $\mathsf{P} \oplus \mathsf{Q} \mapsto \mathsf{P} - \mathsf{Q}$.
We can take inverse limit and tensor with $\Qp$ for each of the objects appearing in the above exact sequence and combine it with Lemma~\ref{bounded growth} to obtain
\[
0 \rightarrow V_p(\cM^{+}(\EC/K_{(n)}) \cap \cM^{-}(\EC/K_{(n)}) )\rightarrow V_p(\cM^{+}(\EC/K_{(n)})) \oplus V_p(\cM^{-}(\EC/K_{(n)})) \rightarrow \EC(K_{(n)})^{\bullet} \rightarrow 0.
\]
Now using Theorem~\ref{main result fine Selmer}, Corollary~\ref{Remark 5.6 Antonio}, and Lemma~\ref{Lemma 2 note 20 May}(ii) we see that
\[
r_n^+ + r_n^- = s_n + e_n = (e_n - \theta_n) + e_n = 2e_n - \theta_n. \qedhere
\]
\end{proof}

In the next result, we give explicit formulae for $r_n^+$ and $r_n^-$ depending on the parity of $n$.

\begin{proposition}
\label{Prop 3 note 21 May}  
Let $n>0$.
With notation and assumptions introduced previously,
\[
r_n^+ = \begin{cases}
    e_n & \text{ if } n \text{ is even}\\
    e_n -\theta_n & \text{ if } n \text{ is odd}
\end{cases} \quad \quad \text{and} \quad \quad r_n^- = \begin{cases}
    e_n - \theta_n & \text{ if } n \text{ is even}\\
    e_n & \text{ if } n \text{ is odd.}
\end{cases}
\]
\end{proposition}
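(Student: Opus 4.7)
The plan is to derive the parity-specific formulas by combining Proposition~\ref{prop 6.10}, which already gives the sum $r_n^++r_n^-=2e_n-\theta_n$, with a direct computation of $(V_p\cM^+(\EC/K_{(n)}))[\Phi_n(x)]$ when $n$ is even (and symmetrically for $\cM^-$ when $n$ is odd). The key geometric input, already extracted in the proof of Lemma~\ref{bounded growth}, is that multiplication by $\widetilde{\omega}_n^-$ sends $\EC(K_{(n)})$ into the set of points $\mathsf{P}$ with $\operatorname{loc}_p(\mathsf{P})\in\EC^+(\Kn)$, and symmetrically for $\widetilde{\omega}_n^+$ and $\EC^-(\Kn)$. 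Passing to $p$-adic completions, I would upgrade this to the inclusion of $\Qp[\Gamma_n]$-modules
\[
\widetilde{\omega}_n^-\cdot\EC(K_{(n)})^{\bullet}\;\subseteq\;V_p\cM^+(\EC/K_{(n)})\;\subseteq\;\EC(K_{(n)})^{\bullet},
\]
together with its mirror for $\cM^-$ using $\widetilde{\omega}_n^+$.

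For the execution, I would first identify the $\Phi_n$-isotypic component of $\EC(K_{(n)})^{\bullet}$. From the claim inside the proof of Theorem~\ref{main result fine Selmer}, $\EC(K_{(n)})^{\bullet}\simeq\bigoplus_{t=0}^{n}(\Qp[x]/\langle\Phi_t(x)\rangle)^{e_t}$ as a $\Qp[\Gamma_n]$-module, so $\EC(K_{(n)})^{\bullet}[\Phi_n(x)]\simeq(\Qp[x]/\langle\Phi_n(x)\rangle)^{e_n}$. Now suppose $n$ is even; then $\Phi_n(x)\nmid\widetilde{\omega}_n^-$, so multiplication by $\widetilde{\omega}_n^-$ acts as an automorphism on the $\Phi_n$-isotypic component. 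Consequently the inclusions above become equalities after taking $\Phi_n$-torsion:
\[
(V_p\cM^+(\EC/K_{(n)}))[\Phi_n(x)]=\EC(K_{(n)})^{\bullet}[\Phi_n(x)]\simeq(\Qp[x]/\langle\Phi_n(x)\rangle)^{e_n}.
\]
Since Corollary~\ref{Remark 5.6 Antonio} identifies the $\Qp$-dimension of the left-hand side with $r_n^+\cdot\phi(p^n)$, this yields $r_n^+=e_n$, and Proposition~\ref{prop 6.10} then forces $r_n^-=2e_n-\theta_n-e_n=e_n-\theta_n$.

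The case $n$ odd is handled by the mirror argument, using that $\Phi_n(x)\mid\widetilde{\omega}_n^-$ but $\Phi_n(x)\nmid\widetilde{\omega}_n^+$, so that the same reasoning applied to $\widetilde{\omega}_n^+\cdot\EC(K_{(n)})^{\bullet}\subseteq V_p\cM^-(\EC/K_{(n)})$ gives $r_n^-=e_n$ and hence $r_n^+=e_n-\theta_n$. The only step that requires any care is the passage from the point-level statement $\widetilde{\omega}_n^-\cdot\EC(K_{(n)})\subseteq\cM^+(\EC/K_{(n)})$ (interpreted via the natural map into $\EC(K_{(n)})\otimes\Qp/\Zp$) to the $V_p$-level inclusion; this should be routine because $V_p$ is exact on the relevant compact/discrete duality and $\widetilde{\omega}_n^-$ is a $\Qp[\Gamma_n]$-linear operator on the ambient space $\EC(K_{(n)})^{\bullet}$. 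Everything else is formal manipulation of the structure theorem.
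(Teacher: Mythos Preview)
Your proposal is correct and follows essentially the same route as the paper: both use the inclusion $\widetilde{\omega}_n^-\cdot\EC(K_{(n)})^{\bullet}\subseteq V_p\cM^+(\EC/K_{(n)})\subseteq\EC(K_{(n)})^{\bullet}$ extracted from Lemma~\ref{bounded growth}, observe that $\gcd(\widetilde{\omega}_n^-,\Phi_n(x))=1$ for $n$ even to force equality of the $\Phi_n$-torsion parts, read off $r_n^+=e_n$, and then invoke Proposition~\ref{prop 6.10} for $r_n^-$. Your added remark about the passage to the $V_p$-level is a detail the paper leaves implicit.
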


\begin{proof}
We prove the case when $n$ is even and the case of $n$ odd follows similarly.

It follows from the discussion in the proof of Lemma~\ref{bounded growth} that
\begin{equation}
\label{Prop 6.11 eqn 1}
\tilde{\omega}_n^{-} \cdot \EC(K_{(n)})^{\bullet} \subseteq V_p(\cM^{+}(\EC/K_{(n)})) \subseteq \EC(K_{(n)})^{\bullet}.
\end{equation}
But, since $n$ is even it follows from the definition of $\tilde{\omega}_n^-$ that $\gcd(\tilde{\omega}_n^-, \Phi_n(x))=1$.
We conclude that
\[
\tilde{\omega}_n^{-} \cdot \EC(K_{(n)})^{\bullet}[\Phi_n(x)] = \EC(K_{(n)})^{\bullet}[\Phi_n(x)].
\]
Combining this with the $[\Phi_n(x)]$-torsion part of \eqref{Prop 6.11 eqn 1}, we get
\[
V_p(\cM^{+}(\EC/K_{(n)}))[\Phi_n(x)] = \EC(K_{(n)})^{\bullet}[\Phi_n(x)]
\]
Thus
\[
\left( \frac{\Qp[x]}{\langle \Phi_n(x) \rangle}\right)^{r_n^+} \simeq \left( \frac{\Qp[x]}{\langle \Phi_n(x) \rangle}\right)^{e_n}.
\]
Thus, $r_n^+ = e_n$ and $r_n^- = e_n - \theta_n$ by Proposition~\ref{prop 6.10}.
This completes the proof.
\end{proof}

The following theorem (from the introduction) is now immediate.

\begin{theorem}
\label{thm 6.13}
Let $\EC/\Q$ be an elliptic curve and $p$ be a fixed prime of good \emph{supersingular} reduction.
Suppose that $a_p(\EC)=0$.
Let $K/\Q$ be an abelian extension such that $p$ is unramified in $K$ and suppose that \ref{star} is satisfied for $\Gal(K/\Q)$.
Suppose that $\Zha(\EC/K_{(n)})$, $\Zha^{\pm}(\EC/K_{(n)})$ are finite for all $n$.
\begin{align*}
    \charac_{\Lambda}(\cM^{+}(\EC/K_{\cyc})^\vee) & = \Bigg\langle x^{e_0} \prod_{\substack{n>0\\ n \text{ odd}}} \Phi_n(x)^{e_n - \theta_n} \prod_{\substack{n>0\\ n \text{ even}}} \Phi_n(x)^{e_n} \Bigg\rangle\\
     \charac_{\Lambda}(\cM^{-}(\EC/K_{\cyc})^\vee) & = \Bigg \langle x^{e_0} \prod_{\substack{n>0\\ n \text{ odd}}} \Phi_n(x)^{e_n} \prod_{\substack{n>0\\ n \text{ even}}} \Phi_n(x)^{e_n  - \theta_n} \Bigg\rangle.
\end{align*}
In particular,
\[
\gcd\left( \charac_{\Lambda}(\cM^{+}(\EC/K_{\cyc})^\vee), \charac_{\Lambda}(\cM^{-}(\EC/K_{\cyc})^\vee)\right) = \Bigg\langle x^{e_0} \prod_{n>0} \Phi_n(x)^{e_n - \theta_n} \Bigg\rangle.
\]
\end{theorem}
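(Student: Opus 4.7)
The strategy is to package the work already done in the preceding propositions. By the pseudo-isomorphism \eqref{pseudo for Mpm}, the Pontryagin dual $\cM^{\pm}(\EC/K_{\cyc})^\vee$ is pseudo-isomorphic to a direct sum of cyclic $\Lambda$-modules of the form $\Lambda/\langle \Phi_c(x)\rangle$, and the characteristic ideal is therefore the product of all these cyclotomic factors taken with their multiplicities. Since the $p^n$-cyclotomic polynomials $\Phi_n(x)$ are pairwise coprime distinguished polynomials (together with $\Phi_0(x)=x$), the characteristic ideal is completely determined once we know the multiplicity $r_n^\pm$ with which each $\Phi_n(x)$ appears.

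The plan is then simply to substitute the values of $r_n^\pm$ that have already been computed. For $n=0$, the proposition immediately preceding Lemma~\ref{bounded growth} gives $r_0^+=r_0^-=e_0$, accounting for the factor $x^{e_0}=\Phi_0(x)^{e_0}$ in both characteristic ideals. For $n\ge 1$, Proposition~\ref{Prop 3 note 21 May} gives
\[
r_n^+=\begin{cases} e_n & n \text{ even}\\ e_n-\theta_n & n \text{ odd}\end{cases},\qquad r_n^-=\begin{cases} e_n-\theta_n & n \text{ even}\\ e_n & n \text{ odd}\end{cases},
\]
which splits the product into the two sub-products appearing in the theorem statement. Putting these together yields the formulas for $\charac_{\Lambda}(\cM^{\pm}(\EC/K_{\cyc})^\vee)$.

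The final assertion about the greatest common divisor reduces to observing that, for each $n$, we must take the minimum of the two exponents of $\Phi_n(x)$. For $n=0$ both exponents equal $e_0$, so the minimum is $e_0$. For $n\ge 1$, regardless of the parity of $n$, exactly one of $r_n^+,r_n^-$ equals $e_n$ and the other equals $e_n-\theta_n$; since $\theta_n\ge 0$, the minimum is always $e_n-\theta_n$. Combining these observations yields the stated gcd.

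There is essentially no obstacle at this stage: all the real work has been carried out in Proposition~\ref{prop 6.10} (giving the sum $r_n^++r_n^-=2e_n-\theta_n$) and Proposition~\ref{Prop 3 note 21 May} (separating the two contributions via the $\pm$-norm polynomials $\widetilde{\omega}_n^\mp$). The only mild point to verify is that one is allowed to read off the characteristic ideal from the pseudo-isomorphism \eqref{pseudo for Mpm}, which is standard since pseudo-isomorphism preserves the characteristic ideal and $\Lambda/\langle \Phi_n(x)\rangle$ is its own characteristic quotient; the factors $\Phi_n(x)^{r_n^\pm}$ then multiply to give the ideals displayed in the theorem.
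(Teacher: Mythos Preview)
Your proposal is correct and matches the paper's approach exactly: the paper simply states that the theorem is ``now immediate'' from the preceding propositions, and you have spelled out precisely how this immediacy works by combining the pseudo-isomorphism \eqref{pseudo for Mpm} with the computation of $r_0^\pm=e_0$ and Proposition~\ref{Prop 3 note 21 May}.
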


\begin{remark}
The authors have not seen an analogous result for the explicit description of $\charac_{\Lambda}(\cM^{\pm}(\EC/K_{\cyc})^\vee)$ written down in the literature when $K=\Q$.
\end{remark}

\begin{remark}
If we suppose that $K/\Q$ is abelian and $p$ is unramified in $K/\Q$ but do not assume that \ref{star} holds then for $s_n$ defined via \eqref{st} we have
\[
\charac_{\Lambda}(\cM^{+}(\EC/K_{\cyc})^\vee)  = \Bigg\langle x^{e_0} \prod_{\substack{n>0\\ n \text{ odd}}} \Phi_n(x)^{s_n} \prod_{\substack{n>0\\ n \text{ even}}} \Phi_n(x)^{e_n} \Bigg\rangle
\]
and similarly for $\charac_{\Lambda}(\cM^{-}(\EC/K_{\cyc})^\vee)$.
\end{remark}

As a final result in this section we intend to prove an analogue of Theorem~\ref{thm 4.12} for $\cM^{\pm}(\EC/K_{\cyc})^\vee$.

\begin{theorem}
\label{thm 5 note of June 16}
Fix an odd prime $p$.
Let $K/\Q$ be an abelian extension such that $p$ is unramified in $K$ and suppose that \ref{star} is satisfied for $\Gal(K/\Q) = \mathsf{G}$.
Let $\EC/\Q$ be an elliptic curve with good \emph{supersingular} reduction at $p$ such that $a_p(\EC)=0$ and $\Zha(\EC/K_{(n)})[p^\infty]$, $\Zha^{\pm}(\EC/K_{(n)})[p^\infty]$ are finite for all $n$.
Then the following map is a pseudo-isomorphism of $\Lambda[\sG]$-modules
\[
\psi: \cM^{\pm}(\EC/K_{\cyc})^\vee \longrightarrow \bigoplus_{k=0}^\infty \bigoplus_{\substack{\val\in \A\\ k-\text{positive}}} \left( \overline{W}_{\val}^{e_{\val,k}-t^{\pm}_k} \otimes \frac{\Lambda}{\langle \Phi_k(x)\rangle} \right).
\]
Here, 
\[
t^{+}_k = \begin{cases}
    0 & \text{ for } k \text{ even}\\
    1 & \text{ for } k \text{ odd}    
\end{cases} \ \text{ and } \  t^{-}_k = \begin{cases}
    0 & \text{ for } k \text{ odd or }0\\
    1 & \text{ for } k\edit{\neq 0} \text{ even}.
\end{cases}
\]
\end{theorem}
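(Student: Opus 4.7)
The plan is to mirror the proof of Theorem~\ref{thm 4.12}. At each finite level $n$ I shall construct a pseudo-isomorphism of $\Zp[\cG_n]$-modules
\[
\psi_n : \cM^{\pm}(\EC/K_{(n)})^\vee \longrightarrow \bigoplus_{k=0}^n \bigoplus_{\substack{\val\in\A\\ k\text{-positive}}} \overline{W}_{\val}^{e_{\val,k}-t^{\pm}_k} \otimes \frac{\Zp[x]}{\langle\Phi_k(x)\rangle},
\]
and then pre-compose with the $\Lambda[\sG]$-pseudo-isomorphism $\cM^{\pm}(\EC/K_{\cyc})^\vee \sim \cM^{\pm}(\EC/K_{(n)})^\vee$ for $n\gg 0$ afforded by Corollary~\ref{Remark 5.6 Antonio} and the finiteness hypothesis on $\Zha^{\pm}(\EC/K_{(n)})[p^\infty]$. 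Once the rational decomposition of $V_p\cM^{\pm}(\EC/K_{(n)})$ below is in hand, the construction of $\psi_n$ is formal: pick a $\Zp[\cG_n]$-integral lattice, take $\Zp$-duals using the self-duality of each summand $\overline{W}_{\val}\otimes\Zp[x]/\langle\Phi_k(x)\rangle$, and invoke the pseudo-identification $\cM^{\pm}(\EC/K_{(n)})^\vee \sim (T_p\cM^{\pm}(\EC/K_{(n)}))^D$, exactly as in the lemma preceding Theorem~\ref{thm 4.12}.

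The crux is to promote Proposition~\ref{Prop 3 note 21 May} to the $\Qp[\cG_n]$-module identification
\[
V_p\cM^{\pm}(\EC/K_{(n)}) \simeq \bigoplus_{k=0}^n \bigoplus_{\substack{\val\in\A\\ k\text{-positive}}} W_{\val}^{e_{\val,k}-t^{\pm}_k} \otimes \frac{\Qp[x]}{\langle\Phi_k(x)\rangle}.
\]
Since the $\Phi_k$ are pairwise coprime in $\Qp[x]$, it suffices to identify each $\Phi_k(x)$-torsion piece equivariantly. I focus on $\cM^+$; the case of $\cM^-$ is symmetric up to a single exception at $k=0$ that is handled separately.

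When $k=0$ or $k>0$ is even, $\Phi_k(x)\nmid\widetilde{\omega}_n^-(x)$ in $\Qp[x]$ (the factors of $\widetilde{\omega}_n^-$ are the $\Phi_i$ with $i$ odd, and $\widetilde{\omega}_n^-(0)$ is a nonzero power of $p$), so multiplication by $\widetilde{\omega}_n^-$ is invertible on the $\Phi_k$-torsion. The containment $\widetilde{\omega}_n^-\cdot\EC(K_{(n)})^\bullet \subseteq V_p\cM^+(\EC/K_{(n)})$ -- extracted from the proof of Lemma~\ref{bounded growth} by passing to an inverse limit in $k$ and tensoring with $\Qp$ -- then sandwiches
\[
\EC(K_{(n)})^\bullet[\Phi_k(x)] \subseteq V_p\cM^+(\EC/K_{(n)})[\Phi_k(x)] \subseteq \EC(K_{(n)})^\bullet[\Phi_k(x)],
\]
and \eqref{1} identifies the common space as $\bigoplus_{\substack{\val\\ k\text{-positive}}} W_{\val}^{e_{\val,k}}\otimes\frac{\Qp[x]}{\langle\Phi_k(x)\rangle}$, consistent with $t^+_k=0$. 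When $k>0$ is odd, $\Phi_k\mid\widetilde{\omega}_n^-$ and the previous trick is void; I instead use the inclusion $\cM(\EC/K_{(n)}) \subseteq \cM^+(\EC/K_{(n)})$, immediate from the definitions (an element killed at every prime above $p$ lies in the preimage of $\cM^+(\EC/\Kn)$, which contains zero). Passing to $\Phi_k$-torsion of Tate modules yields $V_p\cM(\EC/K_{(n)})[\Phi_k(x)] \hookrightarrow V_p\cM^+(\EC/K_{(n)})[\Phi_k(x)]$; both sides have $\Qp$-dimension $(e_k-\theta_k)\deg\Phi_k$ by Theorem~\ref{main result fine Selmer} and Proposition~\ref{Prop 3 note 21 May} respectively, so the inclusion is an equality of $\Qp[\cG_n]$-modules, and \eqref{TpM tensor Qp equation} transports the equivariant description of $V_p\cM$ to $V_p\cM^+$, matching $t^+_k=1$.

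For $\cM^-$ the same dichotomy applies with parities interchanged, using $\widetilde{\omega}_n^+$ in place of $\widetilde{\omega}_n^-$. The sole glitch is $k=0$: here $\Phi_0(x)=x$ divides $\widetilde{\omega}_n^+(x)$, so the direct trick fails. Instead I invoke Lemma~\ref{Lemma 2 note 20 May}(i) and functoriality: the inclusion $\cM^-(\EC/K)\hookrightarrow\cM^-(\EC/K_{(n)})^{\Gamma_n}$ produces an embedding $\EC(K)^\bullet = V_p\cM^-(\EC/K)\hookrightarrow V_p\cM^-(\EC/K_{(n)})[\Phi_0(x)]$; both sides have $\Qp$-dimension $e_0$ by Corollary~\ref{Remark 5.6 Antonio}, forcing equality, and Proposition~\ref{Prop 2 note 1} supplies the $\sG$-decomposition. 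The main obstacle is precisely this asymmetric treatment of parities together with the mild break at $k=0$ on the minus side; one must also check that each dimension-matching identification is a genuine equality of $\cG_n$-representations rather than merely $\Gamma_n$-representations, which is automatic since every inclusion in sight is $\cG_n$-equivariant.
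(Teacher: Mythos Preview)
Your proof is correct and follows essentially the same route as the paper: identify $V_p\cM^{\pm}(\EC/K_{(n)})[\Phi_k(x)]$ equivariantly by sandwiching with $\EC(K_{(n)})^\bullet[\Phi_k(x)]$ in the ``full rank'' parity and with $V_p\cM(\EC/K_{(n)})[\Phi_k(x)]$ in the other, then dualize and pass to the limit exactly as in Theorem~\ref{thm 4.12}. The only cosmetic difference is that you reprove the content of Proposition~\ref{Prop 3 note 21 May} via the explicit $\widetilde{\omega}_n^\mp$-multiplication rather than citing it for the dimension count; this is why you need a separate argument at $k=0$ on the minus side, whereas the paper's approach (use $V_p\cM^-\subseteq\EC(K_{(n)})^\bullet$ together with $r_0^-=e_0$) handles that case uniformly.
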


\begin{proof}
For $n \gg 0$, there is a pseudo-isomorphism of $\Lambda[\sG]$-modules 
\[
\psi' : \cM^{\pm}(\EC/K_{\cyc})^\vee \longrightarrow \cM^{\pm}(\EC/K_{(n)})^\vee.
\]
We prove the case for $\cM^{+}(\EC/K_{\cyc})^\vee$ and the other case is similar.

When \edit{$n$} is even, we note that $V_p(\cM^+(\EC/K_{(n)})) \subseteq \EC(K_{(n)})^{\bullet}$.
By Proposition~\ref{Prop 3 note 21 May} and upon comparing dimensions we get the following isomorphism of $\sG$-module\edit{s}
\[
V_p(\cM^+(\EC/K_{(n)}))[\Phi_k(x)] = \EC(K_{(n)})^{\bullet}[\Phi_k(x)] \simeq \bigoplus_{\substack{\val\in \A \\ k-\text{positive}}}\left( \overline{W}_{\val}^{e_{\val,k}} \otimes \frac{\Lambda}{\langle \Phi_k(x)\rangle}\right).
\]

On the other hand, for odd \edit{$n$}, we use the inclusion
\[
V_p(\cM(\EC/K_{(n)})) \subseteq V_p(\cM^+(\EC/K_{(n)})),
\]
compare $\Qp$-dimensions, and use Proposition~\ref{Prop 3 note 21 May} to conclude
\begin{align*}
V_p(\cM^+(\EC/K_{(n)}))[\Phi_k(x)] &= V_p(\cM(\EC/K_{(n)}))[\Phi_k(x)] \\
& \simeq \bigoplus_{\substack{\val\in \A\\ k-\text{positive}}} \left( W_{\val}^{e_{\val,k}-1} \otimes \frac{\Qp[x]}{\langle \Phi_k(x)\rangle} \right) \text{ by \eqref{TpM tensor Qp equation}}.
\end{align*}
Hence,
\[
V_p(\cM^+(\EC/K_{(n)})) = \bigoplus_{k=0}^n \bigoplus_{\substack{\val\in \A\\ k-\text{positive}}} \left( W_{\val}^{e_{\val,k}-t_k^+} \otimes \frac{\Qp[x]}{\langle \Phi_k(x)\rangle} \right).
\]
In view of this, we note that there is a pseudo-isomorphism \[
\bigoplus_{k=0}^n \bigoplus_{\substack{\val\in \A\\ k-\text{positive}}} \left( \overline{W}_{\val}^{e_{\val,k}-t^{+}_k} \otimes \frac{\Lambda}{\langle \Phi_k(x)\rangle} \right) \longrightarrow T_p(\cM^+(\EC/K_{(n)})).
\]
By self-duality as $\Lambda[\sG]$-modules we have
\[
\cM^+(\EC/K_{(n)})^\vee \sim T_p(\cM^+(\EC/K_{(n)}))^D.
\]
The result follows from this combined with the pseudo-isomorphism $\psi'$.
\end{proof}

\begin{remark}
\label{Rem on pm noncommutative and anticyclotomic}
One can also prove the non-commutative and anti-cyclotomic versions of the results proved in this section in the settings mentioned in Remark~\ref{Rem non abelian} and Remark~\ref{anticyclotomic remark} respectively.
\end{remark}

\section{Structure of the Selmer group}
\label{sec: Selmer group}

In this section, we study the structure of the dual Selmer group over the cyclotomic $\Zp$-extension \textit{when the base field is $\Q$}.

We begin by introducing some preliminary lemmas.

\begin{lemma}
\label{Lemma 1 Note 2}
Fix an odd prime $p$.
Let $K/\Q$ be an abelian extension which is unramified at $p$ and has Galois group $\sG$.
Set $\cG = \Gal(K_{\cyc}/\Q)$.
Let $\EC/\Q$ be an elliptic curve with good reduction at $p$.
\begin{enumerate}
\item[\textup{(}i\textup{)}]
Suppose that $p\nmid a_p(\EC)$, i.e., $p$ is a prime of good \emph{ordinary} reduction.
Then
\[
\left(\EC(\K_{\cyc}) \otimes \Qp/\Zp\right)^\vee \simeq \Zp\llbracket \cG\rrbracket.
\]
\item[\textup{(}ii\textup{)}]
Suppose that $a_p(\EC)=0$, i.e., $p$ is a prime of good \emph{supersingular} reduction.
Then 
\[
\left(\EC^{-}(\K_{\cyc})\otimes \Qp/\Zp\right)^\vee \simeq \Zp\llbracket \cG \rrbracket
\]
and the following short exact sequence holds
\[
0 \longrightarrow \left(\EC^{+}(\K_{\cyc})\otimes \Qp/\Zp\right)^\vee \longrightarrow \Zp\llbracket \cG \rrbracket \oplus \Zp[\sG] \xrightarrow{\Psi} \Zp[\sG] \longrightarrow 0,
\]
where $\Psi$ is given by the matrix $\begin{pmatrix}
-1 \\ \edit{\varphi + \varphi^{-1}}
\end{pmatrix}$ with $\varphi\in \sG$ the $p$-th power Frobenius map.
\end{enumerate}
\end{lemma}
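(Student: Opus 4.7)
The plan is to reduce both parts to a local structural computation about the formal group of $\EC$ over $\K_{\cyc}$ together with a normal-integral-basis argument, and then to invoke Kataoka's analysis of the plus/minus norm groups for part (ii).

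For part (i), I would start from the standard decomposition $\EC(\K_{\cyc}) \otimes \Qp/\Zp = \bigoplus_{v \mid p} \EC(K_{\cyc,v}) \otimes \Qp/\Zp$ and work locally. At each place $v$ above $p$, the formal group exact sequence
\[
0 \longrightarrow \hat{\EC}(\fm_{K_{\cyc,v}}) \longrightarrow \EC(K_{\cyc,v}) \longrightarrow \widetilde{\EC}(\kappa_v) \longrightarrow 0
\]
has finite étale quotient in the cyclotomic direction, so tensoring with $\Qp/\Zp$ the $p$-primary part comes entirely from the formal group part. Since $\EC$ has ordinary reduction and $K_v/\Qp$ is unramified, the formal logarithm gives a $\cG$-equivariant isomorphism $\hat{\EC}(\fm_{K_{\cyc,v}}) \otimes \Qp/\Zp \simeq K_{\cyc,v}/\mathcal{O}_{K_{\cyc,v}}$ (up to finite kernel and cokernel, which die after Pontryagin dualization modulo a torsion discrepancy that one must control). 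Dualizing via the trace pairing identifies $(\EC(\K_{\cyc}) \otimes \Qp/\Zp)^{\vee}$ with the $p$-adic completion of $\mathcal{O}_{\K_{\cyc}}$ as a $\Zp\llbracket\cG\rrbracket$-module, and the desired freeness of rank one then follows from the normal integral basis theorem for the (abelian) extension $\K_{\cyc}/\Qp$ and the fact that $\cG$ acts transitively on primes above $p$.

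For part (ii), the $E^-$ claim proceeds in parallel with (i): by the identification $\hat{\EC}^-(\fm_{K_{\cyc,v}}) \otimes \Qp/\Zp \simeq E^-(\K_{\cyc,v}) \otimes \Qp/\Zp$ already recorded in the text (citing \cite[Proposition~3.10]{Kat21}), together with Kataoka's description of the norm subgroup structure, the resulting $\Zp\llbracket \cG \rrbracket$-module is again free of rank one. For the $E^+$ piece, I would invoke Kataoka's explicit presentation of $\hat{\EC}^+(\fm_{K_{\cyc,v}})$ using Honda theory: the characteristic polynomial of Frobenius $T^2 - a_p T + p$ governs the relation between $E^+$ and the ambient local points, and the matrix $\bigl(-1,\ \varphi+\varphi^{-1}-a_p\bigr)^{\mathrm{T}}$ encodes precisely this relation, with the first coordinate recording the inclusion into $\Zp\llbracket \cG \rrbracket$ coming from (i) and the second coordinate the Coleman-type map into $\Zp[\sG]$ on which Frobenius acts by $\varphi$. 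Exactness of the displayed sequence then amounts to identifying $(E^+(\K_{\cyc}) \otimes \Qp/\Zp)^{\vee}$ with the kernel of this explicit map.

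The principal obstacle will be the $E^+$ piece: parts (i) and the $E^-$ half of (ii) are morally consequences of the formal logarithm plus normal integral basis, but the $E^+$ identification requires careful bookkeeping of the Frobenius action on the Dieudonné module, the precise image of the Coleman map on the cyclotomic side, and the compatibility of these with the $\sG$-action. The comparison is delicate because $\varphi$ plays a dual role — as a Frobenius in the structure of the formal group and as an element of $\sG$ acting on $\Zp[\sG]$ — and showing that these two actions assemble into the single matrix $\bigl(-1,\ \varphi+\varphi^{-1}-a_p\bigr)^{\mathrm{T}}$ is where the content of the lemma lies.
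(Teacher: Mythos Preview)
Your approach diverges substantially from the paper's, and contains a genuine gap in part~(i).

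The paper's proof is much shorter and purely by descent: Kataoka's \cite[Theorem~1.2]{Kat21} already computes $\left(\EC(\K'_{\cyc}) \otimes \Qp/\Zp\right)^\vee$ and $\left(\EC^{\pm}(\K'_{\cyc}) \otimes \Qp/\Zp\right)^\vee$ as modules over $\Zp\llbracket \cG \times \Delta\rrbracket$, where $\K'_{\cyc} = K(\zeta_{p^\infty}) \otimes \Qp$ and $\Delta = \Gal(K(\zeta_p)/K) \simeq (\Z/p\Z)^\times$. One then simply takes $\Delta$-invariants of Kataoka's sequences to descend from $\K'_{\cyc}$ to $\K_{\cyc}$, using that $\left(\left(\EC(\K'_{\cyc}) \otimes \Qp/\Zp\right)^\vee\right)^{\Delta} \simeq \left(\EC(\K_{\cyc}) \otimes \Qp/\Zp\right)^\vee$ and $\Zp\llbracket \cG \times \Delta\rrbracket^{\Delta} \simeq \Zp\llbracket \cG\rrbracket$. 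Both parts, including the explicit matrix for $\Psi$, drop out immediately.

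Your route instead attempts to reprove the content of Kataoka's theorem from scratch. The specific gap is your appeal to ``the normal integral basis theorem for the (abelian) extension $\K_{\cyc}/\Qp$'': this extension is \emph{ramified} (the cyclotomic $\Zp$-tower is totally ramified at $p$), so there is no classical normal integral basis, and the $p$-adic completion of $\mathcal{O}_{\K_{\cyc}}$ is \emph{not} free of rank one over $\Zp\llbracket \cG\rrbracket$. The freeness of $\left(\EC(\K_{\cyc}) \otimes \Qp/\Zp\right)^\vee$ is genuinely a statement about the formal group (via Coleman power series and the specific height-one structure in the ordinary case, or Honda theory in the supersingular case), not a consequence of additive Galois module theory alone. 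Your sketch for (ii) is closer in spirit since you invoke Kataoka, but you are citing auxiliary propositions and trying to reassemble the presentation of $E^+$, whereas the exact sequence with the matrix $\Psi$ is literally the $\Delta$-fixed part of the sequence Kataoka already writes down.
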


\begin{proof}
\begin{enumerate}
\item[(i)] Set $\Delta = \Gal(K_{\cyc}(\zeta_p)/K_{\cyc}) \simeq \Gal(K(\zeta_p)/K) \simeq (\Z/p\Z)^\times$, where the first isomorphism crucially uses the fact that $\mu_p\cap K^\times$ is trivial.
It is clear that $K_{\cyc}(\zeta_p) = K(\zeta_{p^\infty})$.
Consider the cyclotomic character
\[
\chi: \Delta \longrightarrow \Zp^\times.
\]
For any \edit{$\Zp[\Delta]$}-module $M$, write $M^{\Delta=\chi^i}$ to denote the $\chi^i$-eigenspace of $M$ under the action of $\Delta$.
Set $\K'_{\edit{\cyc}} = K_{\edit{\cyc}}(\zeta_p) \otimes \Qp$.
Then with this notation
\[
\EC(\K'_{\cyc}) \simeq \bigoplus_{i=0}^{p-2} \EC(\K'_{\cyc})^{\Delta= \chi^i} \simeq \EC(\K_{\cyc}) \oplus \bigoplus_{i=1}^{p-2} \EC(\K'_{\cyc})^{\Delta= \chi^i}.
\]
We further observe that
\[
\left( \left( \EC(\K'_{\cyc}) \otimes \Qp/\Zp \right)^\vee\right)^{\Delta} \simeq \left(\EC(\K_{\cyc}) \otimes \Qp/\Zp \right)^\vee.
\]
Taking the $\Delta$-fixed part of \cite[Theorem~1.2(1)]{Kat21} we conclude that
\[
\left(\EC(\K_{\cyc}) \otimes \Qp/\Zp \right)^\vee \simeq \Zp\llbracket \cG \times \Delta \rrbracket^{\Delta} \simeq \Zp\llbracket \cG\rrbracket.
\]
\item[(ii)]
This claim is proven in a manner similar to (i).
Here, we take the $\Delta$-fixed part of the two sequences in \cite[Theorem~1.2(\edit{3})]{Kat21} to obtain the assertions.
\end{enumerate}
\end{proof}

\begin{lemma}
\label{Corollary 2 Note 2}
If $\dagger\in \{\emptyset, \pm\}$ then $\left(\EC^{\dagger}(\Q_{\cyc} )\otimes \Qp/\Zp\right)^\vee \simeq \Lambda = \Zp\llbracket \Gamma \rrbracket$.
\end{lemma}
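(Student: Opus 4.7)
The plan is to obtain this as a direct specialization of Lemma~\ref{Lemma 1 Note 2} to the case $K=\Q$. When $K=\Q$ we have $\sG = \Gal(K/\Q)=\{1\}$, so $\cG = \Gamma$ and $\Zp\llbracket \cG \rrbracket = \Lambda$, while $\Zp[\sG] = \Zp$. (Here we interpret $\EC^\dagger(\Q_{\cyc})$ in the semi-local sense, consistent with the notation $\EC(\K_{\cyc})$ used throughout.)

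For $\dagger = \emptyset$ (good ordinary), the isomorphism in Lemma~\ref{Lemma 1 Note 2}(i) immediately specializes to $(\EC(\Q_{\cyc})\otimes\Qp/\Zp)^\vee \simeq \Lambda$. Likewise, for $\dagger = -$ in the supersingular case, the first isomorphism in Lemma~\ref{Lemma 1 Note 2}(ii) directly yields $(\EC^-(\Q_{\cyc})\otimes\Qp/\Zp)^\vee \simeq \Lambda$.

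The only case that requires a small computation is $\dagger = +$. The short exact sequence in Lemma~\ref{Lemma 1 Note 2}(ii) specializes to
\[
0 \longrightarrow (\EC^+(\Q_{\cyc})\otimes\Qp/\Zp)^\vee \longrightarrow \Lambda \oplus \Zp \xrightarrow{\Psi} \Zp \longrightarrow 0.
\]
With $\sG$ trivial, the Frobenius $\varphi$ is $1$, and since we are in the setting $a_p=0$, the entry $\varphi+\varphi^{-1}-a_p$ equals $2$. Unwinding the matrix description of $\Psi$ (the first component is the natural projection $\Zp\llbracket\cG\rrbracket \to \Zp[\sG]$, which for $K=\Q$ is the augmentation $\epsilon: \Lambda \to \Zp$), the map is $\Psi(a,b) = -\epsilon(a) + 2b$. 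Since $p$ is odd, $2\in\Zp^\times$, so $\Psi$ is surjective and
\[
\ker\Psi \;=\; \{(a, \epsilon(a)/2) : a\in\Lambda\} \;\simeq\; \Lambda
\]
via projection to the first coordinate. This gives $(\EC^+(\Q_{\cyc})\otimes\Qp/\Zp)^\vee \simeq \Lambda$, completing the proof. The only conceptual point worth care is the identification of the first component of $\Psi$ with the augmentation on $\Lambda$, which follows from the construction of the connecting map in Kataoka's framework used to derive Lemma~\ref{Lemma 1 Note 2}(ii).
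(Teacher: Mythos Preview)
Your argument is correct and follows essentially the same route as the paper's own proof: both handle $\dagger\in\{\emptyset,-\}$ by direct specialization of Lemma~\ref{Lemma 1 Note 2} to $K=\Q$, and for $\dagger=+$ both identify $(\EC^+(\Q_{\cyc})\otimes\Qp/\Zp)^\vee$ with $\ker\Psi$, compute $\Psi(a,b)=-\epsilon(a)+(2-a_p)b$ (the paper writes $f(0)$ for $\epsilon(a)$), and exhibit the isomorphism $\Lambda\xrightarrow{\sim}\ker\Psi$, $a\mapsto(a,\epsilon(a)/2)$. Your explicit remark that the first entry of the matrix encodes the augmentation $\Lambda\to\Zp$ is exactly the point the paper uses implicitly.
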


We remind the reader that we write $\emptyset$ to mean $p\nmid a_p$ and $\EC^{\emptyset}(\K_{\cyc} )\otimes \Qp/\Zp = \EC(\K_{\cyc} )\otimes \Qp/\Zp$.

\begin{proof}
When $\dagger \in\{\emptyset, -\}$, the result is immediate from Lemma~\ref{Lemma 1 Note 2} by setting $K=\Q$ and $\sG = \{1\}$.

When $\dagger = +$, then $\left(\EC^{+}(\Q_{\cyc})\otimes \Qp/\Zp\right)^\vee = \ker(\Psi)$ where 
\begin{align*}
\Psi: \Lambda \oplus \Zp & \rightarrow \Zp\\
(f(\edit{x}), y) & \mapsto \edit{2}y - f(0)
\end{align*}
is the map in Lemma~\ref{Lemma 1 Note 2}(ii) with $K=\Q$.
It now follows that 
\begin{align*}
\Lambda & \xrightarrow{\sim }\ker(\Psi)\\
f(\edit{x}) & \mapsto \left( f(\edit{x}), \frac{f(0)}{\edit{2}}\right), 
\end{align*}
\edit{here we are using the implicit identification $\Lambda \simeq \Zp\llbracket x\rrbracket$.}
\end{proof}

\subsection{Case of good ordinary reduction}
In this section, we assume that $p$ is an odd prime of good ordinary reduction for $\EC/\Q$ and that $\Sha(\EC/\Q_{(n)})[p^{\infty}]$ is finite for all $n$.
By \cite[Corollary~3.0.6]{Lee20} we know that if the dual Selmer group over $\Q_{\cyc}$ has the following structure
\begin{equation}
\label{Lee expression}
\Sel(\EC/\Q_{\cyc})^\vee \sim \left( \bigoplus_{i=1}^r \frac{\Lambda}{\langle g_i^{d_i}(x)\rangle}\right) \oplus \left(\bigoplus_{\substack{j=1\\ f_j\geq 2}}^s \frac{\Lambda}{\langle \Phi_{a_j}^{f_j}(x)\rangle}\right) \oplus \left(\bigoplus_{k=1}^t \frac{\Lambda}{\langle \Phi_{b_k}(x)\rangle} \right) 
\end{equation}
where the irreducible polynomials $g_i(x)$'s are coprime to $\Phi_n(x)$ for all $n$, then
\begin{equation}
\label{Lee sha}
\Sha(\EC/\Q_{\cyc})[p^\infty]^\vee \sim \left( \bigoplus_{i=1}^r \frac{\Lambda}{\langle g_i^{d_i}(x)\rangle}\right) \oplus \left(\bigoplus_{\substack{j=1\\ f_j\geq 2}}^s \frac{\Lambda}{\langle \Phi_{a_j}^{f_j -1}(x)\rangle}\right).
\end{equation}

Our main goal in this section is to provide a refinement of the structure.

\begin{theorem}
\label{aj's are unique}
Fix an elliptic curve $\EC/\Q$ and prime $p>2$ of good \emph{ordinary} reduction.
Suppose that $\Sha(\EC/\Q_{(n)})[p^{\infty}]$ is finite for all $n$ and $\Zha(\EC/\Q_{\cyc})[p^\infty]$ is finite.
Then $g_i(x)$'s and $a_j$'s are distinct.
\end{theorem}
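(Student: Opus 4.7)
The plan is to reduce the claim to a purely module-theoretic statement after localizing at each height-one prime $(\Phi_n)$ of $\Lambda$, where everything collapses to a statement about modules over a DVR.

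First, I will combine the alternative description of the fine Selmer group in~\eqref{alternative defn of FSG} with Lemma~\ref{Corollary 2 Note 2} (applied with $K=\Q$ and $\dagger=\emptyset$; note that $p$ is totally ramified in $\Q_{\cyc}/\Q$, so there is a unique prime above $p$) and Pontryagin dualize. This produces a short exact sequence of $\Lambda$-modules
\[
0 \longrightarrow L^\vee \longrightarrow \Sel(\EC/\Q_{\cyc})^\vee \longrightarrow \Sel_0(\EC/\Q_{\cyc})^\vee \longrightarrow 0,
\]
in which $L^\vee$ is genuinely \emph{cyclic} over $\Lambda$, since it is a quotient of $\left(\EC(\Q_{\cyc})\otimes\Qp/\Zp\right)^\vee \simeq \Lambda$. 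Using the assumed finiteness of $\Zha(\EC/\Q_{\cyc})[p^\infty]$, one has $\Sel_0(\EC/\Q_{\cyc})^\vee\sim\cM(\EC/\Q_{\cyc})^\vee$, and Theorem~\ref{main result fine Selmer} (for $K=\Q$, in which case $\theta_n$ equals $1$ exactly when $e_n>0$) then gives $\Sel_0(\EC/\Q_{\cyc})^\vee\sim\bigoplus_{e_n>0}(\Lambda/\Phi_n)^{e_n-1}$.

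Next, I will fix $n\geq 0$ and localize the above short exact sequence at the height-one prime $(\Phi_n)\subset\Lambda$. The local ring $R:=\Lambda_{(\Phi_n)}$ is a DVR with uniformizer $\Phi_n$, and pseudo-null $\Lambda$-modules (supported in codimension $\geq 2$) become zero here, so every pseudo-isomorphism above becomes an honest isomorphism. Reading off Lee's decomposition~\eqref{Lee expression} and noting that the $g_i$ and $\Phi_{a_j}$ for $a_j\neq n$ become units in $R$, I obtain
\[
M:=\Sel(\EC/\Q_{\cyc})^\vee_{(\Phi_n)}\;\cong\;\bigoplus_{j\,:\,a_j=n}R/(\Phi_n^{f_j})\;\oplus\;(R/\Phi_n)^{b_n},
\]
with $b_n:=|\{k:b_k=n\}|$, while $Q:=\Sel_0(\EC/\Q_{\cyc})^\vee_{(\Phi_n)}\cong(R/\Phi_n)^{e_n-1}$ is killed by $\Phi_n$, and $C:=L^\vee_{(\Phi_n)}$ is still cyclic, hence $C\cong R/(\Phi_n^c)$ for some $c\geq 0$.

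The crux is then immediate. Since $M/C\cong Q$ is annihilated by $\Phi_n$, we have $\Phi_n M\subseteq C$, and so $\Phi_n M$ is a submodule of a cyclic module over the DVR $R$, hence itself cyclic. But from the decomposition of $M$ one computes
\[
\Phi_n M \;\cong\;\bigoplus_{j\,:\,a_j=n} R/(\Phi_n^{f_j-1}),
\]
and every summand on the right is \emph{nonzero}, because Lee's decomposition requires each $f_j\geq 2$. Cyclicity of this direct sum over a DVR forces $|\{j:a_j=n\}|\leq 1$, and since $n$ was arbitrary this is exactly the distinctness of the $a_j$'s. The main obstacle I anticipate is convincing the reader that $L^\vee$ is cyclic as an actual $\Lambda$-module rather than only cyclic up to pseudo-isomorphism; this is essential because cyclicity of $C=L^\vee_{(\Phi_n)}$ (and hence of the submodule $\Phi_n M$) is what drives the argument. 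Lemma~\ref{Corollary 2 Note 2}(i) supplies a \emph{genuine} isomorphism $\left(\EC(\Q_{\cyc})\otimes\Qp/\Zp\right)^\vee\simeq\Lambda$ in the good ordinary setting, which is exactly what is needed.
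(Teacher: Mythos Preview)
Your proof is correct, and the underlying idea coincides with the paper's: both arguments exploit the right-exact sequence
\[
\Lambda\;\simeq\;\left(\EC(\Q_{\cyc,p})\otimes\Qp/\Zp\right)^\vee \xrightarrow{\phi} \Sel(\EC/\Q_{\cyc})^\vee \longrightarrow \Sel_0(\EC/\Q_{\cyc})^\vee \longrightarrow 0
\]
together with the fact (from Lemma~\ref{Corollary 2 Note 2}) that $\image(\phi)$ is a \emph{genuinely} cyclic $\Lambda$-module, and the fact that $\Sel_0(\EC/\Q_{\cyc})^\vee$ is $\Phi_n$-semisimple at each $\Phi_n$ by Lei's Theorem~C (equivalently Theorem~\ref{main result fine Selmer} for $K=\Q$).

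The execution differs. The paper argues numerically with characteristic ideals: it tensors $\image(\phi)$ with $\Lambda/(\Phi_m)$ to see that at most one elementary summand of $\image(\phi)$ can involve $\Phi_m$, then compares exponents in $\charac_\Lambda(\Sel^\vee)=\charac_\Lambda(\image(\phi))\cdot\charac_\Lambda(\Sel_0^\vee)$ to force an inequality $f_1+\cdots+f_{c-1}\le c-1$ that contradicts $f_j\ge 2$. Your route is to localize the short exact sequence at the height-one prime $(\Phi_n)$ and observe directly that $\Phi_n M\subseteq C$; since $C$ is cyclic over a DVR, so is $\Phi_n M\cong\bigoplus_{j:a_j=n}R/(\Phi_n^{f_j-1})$, forcing at most one nonzero summand. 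Your argument is somewhat more streamlined: it avoids the characteristic-ideal bookkeeping and the separate bound $r\le f_c$ on the exponent appearing in the $\Phi_m$-part of $\image(\phi)$. Both approaches hinge on the same structural input, and your emphasis that cyclicity of $L^\vee$ must be genuine (not merely up to pseudo-isomorphism) is exactly right.
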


We remind the reader that $\Zha(\EC/\Q_{\cyc})[p^\infty]$ is expected to be finite; see \cite[Question~8.3]{Wut07}.

\begin{proof}
For a fixed $i$, suppose that there are $\alpha'$ terms with powers of $g_i(x)$ in (\ref{Lee expression}).
Recall from \eqref{alternative defn of FSG} that we have a right exact sequence
\[
\left( \EC(\Q_{\cyc,p}) \otimes \Qp/\Zp \right)^{\vee} \xrightarrow{\phi\edit{\otimes \operatorname{id}}} \Sel(\EC/\Q_{\cyc})^\vee \longrightarrow \Sel_0(\EC/\Q_{\cyc})^\vee \longrightarrow 0.
\]
In view of Lemma~\ref{Corollary 2 Note 2} we know that the above exact sequence becomes
\[
\Lambda \xrightarrow{\phi} \Sel(\EC/\Q_{\cyc})^\vee \longrightarrow \Sel_0(\EC/\Q_{\cyc})^\vee \longrightarrow 0.
\]
Upon tensoring the above sequence by $\Lambda/(g_i(x))$ we obtain the following exact sequence.
\[
\Lambda/\langle g_i(x)\rangle \xrightarrow{\phi \otimes \Lambda/\langle g_i(x)\rangle} \Sel(\EC/\Q_{\cyc})^\vee  \otimes \Lambda/\langle g_i(x)\rangle\longrightarrow \Sel_0(\EC/\Q_{\cyc})^\vee   \otimes \Lambda/\langle g_i(x)\rangle\longrightarrow 0.
\]
By the assumption that $\Zha(\EC/\Q_{\cyc})[p^\infty]$ is finite and using \cite[Theorem~C]{Lei23} we have that the last term of the sequence above is finite.
Therefore, by considering the characteristic ideals of the first two terms, we have,
\[
g_i(x)^{\alpha'}|g_i(x).
\]
Hence, we must have $\alpha'=1$ as desired.

To complete the proof of this theorem we need to show that each $\Phi_{a_j}$ term appears exactly once in the middle summand of \eqref{Lee expression}.

Suppose that $m\in \Z_{\geq 0}$ and $a_j=m$ for multiple $j$'s.
More precisely,
\[
\left(\bigoplus_{\substack{j=1\\ f_j\geq 2}}^c \frac{\Lambda}{\langle \Phi_{m}^{f_j}(x) \rangle}\right) \oplus \left(\bigoplus_{k=1}^d \frac{\Lambda}{\langle \Phi_{m}(x) \rangle}\right) \text{ with }c\geq 2
\]
appears in the structure of $\Sel(\EC/\Q_{\cyc})^{\vee}$ where it is possible that the second summand may vanish.
The goal is to show that $c\leq 1$.

Without loss of generality we may suppose that 
\[
f_c \geq f_{c-1} \geq \ldots \geq f_1 \geq 2.
\]
\emph{For ease of notation, we write $\omega = \langle \Phi_{m}(x) \rangle$ for the remainder of the proof.
}Suppose that there is a pseudo-isomorphism
\[
\image(\phi) \otimes \frac{\Lambda}{\omega} \sim \left( \frac{\Lambda}{\omega}\right)^{\alpha}.
\]
This implies that there exists a map
\[
\Lambda \otimes \frac{\Lambda}{\omega} \xrightarrow{\phi \otimes \operatorname{id}} \left( \frac{\Lambda}{\omega}\right)^{\alpha}
\]
with a finite cokernel.
This forces that $\alpha =0,1$.

Clearly, we have an injection $\image(\phi) \hookrightarrow \Sel(\EC/\Q_{\cyc})^\vee$ which induces the following map of elementary modules with a finite kernel
\[
\mathscr{E}(\image(\phi)) \longrightarrow \mathscr{E}(\Sel(\EC/\Q_{\cyc})^\vee).
\]
Therefore, we learn that the $\omega$-part of $\image(\phi)$ is of the shape $\left(\frac{\Lambda}{\omega^r}\right)^{\alpha}$ where $r \leq f_c$.

In view of the assumptions, we note that \cite[Theorem~C]{Lei23} and \cite[Corollary~3.0.6]{Lee20} are applicable and they together imply that the $\Lambda$-characteristic ideal of $\Sel_0(\EC/\Q_{\cyc})^\vee$ has $\omega^{c-1+d}$ as its $\omega$-part.
Since characteristic ideals are multiplicative in short sequences, we have
\[
\charac_{\Lambda}(\Sel(\EC/\Q_{\cyc})^\vee) = \charac_{\Lambda}(\image(\phi))\charac_{\Lambda}(\Sel_0(\EC/\Q_{\cyc})^\vee).
\]
\edit{
Comparing the $\omega$-part yields
\[
f_1 + \ldots + f_c + d = r\alpha + (c-1) + d.
\]
By definition, each $f_i > 1$ which forces that $\alpha \neq 0$. Hence $\alpha=1$ and
the above equality becomes
\[
f_1 + \ldots + f_c = r + (c-1) \leq f_c + c-1.
\]
Equivalently,
\[
f_1 + \ldots + f_{c-1} \leq c-1.
\]
This is not possible because each $f_i > 1$.
}

We have obtained a contradiction; hence, $c\leq 1$.
This completes the proof of the theorem.
\end{proof}

\begin{remark}
We have shown that the rank growth data of $\EC(\Q_{\cyc})$ along with information on the $\Lambda$-characteristic ideal of $\Sel(\EC/\Q_{\cyc})$ -- which can be obtained from an $L$-function by the Iwasawa Main Conjecture -- completely determines the structure of $\Sel(\EC/\Q_{\cyc})$.
\end{remark}

The next result is immediate upon combining Theorem~\ref{aj's are unique} and \eqref{Lee sha}.

\begin{corollary}
Fix an elliptic curve $\EC/\Q$ and an odd prime $p$ such that $p\nmid a_p$.
Suppose that $\Sha(\EC/\Q_{(n)})[p^{\infty}]$ is finite for all $n$ and that $\Zha(\EC/\Q_{\cyc})[p^\infty]$ is finite.
Then $\Sha(\EC/\Q_{\cyc})[p^\infty]^\vee$ is pseudo-isomorphic to a cyclic $\Lambda$-module.
\end{corollary}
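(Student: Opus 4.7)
The plan is to combine Theorem~\ref{aj's are unique} with the decomposition~\eqref{Lee sha} from Lee's work, together with a standard CRT-style argument.

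First I would isolate what is meant by the ``cyclotomic part'' of $\Sha(\EC/\Q_{\cyc})[p^\infty]^\vee$. Using the pseudo-isomorphism \eqref{Lee sha}, this is precisely the summand
\[
\bigoplus_{\substack{j=1\\ f_j\geq 2}}^{s} \frac{\Lambda}{\langle \Phi_{a_j}^{f_j-1}(x)\rangle},
\]
i.e.\ the summands whose annihilators are powers of cyclotomic polynomials. The $g_i^{d_i}$-summands are, by construction, coprime to every $\Phi_n(x)$ and hence contribute nothing to the cyclotomic part.

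Next I would invoke Theorem~\ref{aj's are unique}, which under the hypotheses of the corollary asserts that the integers $a_j$ appearing in \eqref{Lee expression} are pairwise distinct. Since the same indices $a_j$ control the cyclotomic summands in \eqref{Lee sha}, this means the cyclotomic part of $\Sha(\EC/\Q_{\cyc})[p^\infty]^\vee$ is a direct sum of modules of the form $\Lambda/\langle \Phi_{a_j}^{f_j-1}(x)\rangle$ in which the $a_j$ are distinct. Because $\Phi_{a}(x)$ and $\Phi_{a'}(x)$ are coprime in $\Lambda$ whenever $a\neq a'$ (their resultant is a $p$-power but they are coprime in $\Q_p[x]$, and a short argument in $\Lambda$ shows they generate the unit ideal modulo finite index — equivalently, one may work in $\mathscr{E}(\Lambda)$), the powers $\Phi_{a_j}^{f_j-1}(x)$ are pairwise coprime, and the Chinese Remainder Theorem gives a $\Lambda$-module isomorphism
\[
\bigoplus_{\substack{j=1\\ f_j\geq 2}}^{s} \frac{\Lambda}{\langle \Phi_{a_j}^{f_j-1}(x)\rangle} \;\simeq\; \frac{\Lambda}{\left\langle \prod_{j:\,f_j\geq 2} \Phi_{a_j}^{f_j-1}(x)\right\rangle},
\]
which is cyclic as a $\Lambda$-module.

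The main thing to be careful about is the meaning of ``cyclic'' in the presence of the pseudo-isomorphism (rather than an honest isomorphism) in \eqref{Lee sha}: one only gets cyclicity of the cyclotomic part up to pseudo-isomorphism. In the setup of the paper, where the ``cyclotomic part'' is always defined via the elementary $\Lambda$-module $\mathscr{E}(\Sha(\EC/\Q_{\cyc})[p^\infty]^\vee)$, this is exactly the right notion, so no additional obstacle arises. The only nontrivial input is Theorem~\ref{aj's are unique}; everything else is a clean CRT application.
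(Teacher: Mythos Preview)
Your proposal is correct and follows exactly the route the paper takes: the paper simply says the corollary ``is immediate upon combining Theorem~\ref{aj's are unique} and \eqref{Lee sha}'', and you have spelled out precisely that combination, including the CRT step and the caveat that the resulting cyclicity is to be understood at the level of the elementary module (i.e.\ up to pseudo-isomorphism). Your observation that $\bigoplus_j \Lambda/\langle\Phi_{a_j}^{f_j-1}\rangle$ is not literally cyclic over the local ring $\Lambda$ but only pseudo-isomorphic to a cyclic module is a useful clarification the paper leaves implicit.
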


\subsection{Case of good supersingular reduction}

Throughout this section, $\EC/\Q$ is an elliptic curve with good supersingular reduction at $p$ and for simplicity we assume that $a_p(\EC) =0$.
In particular, this condition is always satisfied if $p\geq 5$.

As in \cite{Lee20} we define the operator $\mathfrak{G}(X)$ for a $\Lambda$-module $X$ as follows
\[
\mathfrak{G}(X) = \varprojlim_{n} \left( \frac{X}{\omega_n X}[p^\infty]\right) \text{ where } \omega_n = (1+x)^{p^n} -1.
\]

\begin{theorem}
Let $\EC/\Q$ be an elliptic curve with good supersingular reduction at $p$.
Suppose that the supersingular Control Theorem holds for the tuple \textup{(}$\EC$, $p$, $K$\textup{)}.
Then
\[
\Zha^{\pm}
(\EC/K_{\cyc})^\vee \simeq \mathfrak{G}(\Sel^{\pm}(\EC/K_{\cyc})^\vee).
\]
\end{theorem}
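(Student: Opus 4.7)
The plan is to combine the dualization of the tautological sequence \eqref{eqn: tautological ss} with the supersingular Control Theorem \ref{Control Theorem ss}, and then pass to the inverse limit in $n$ to invoke the definition of $\mathfrak{G}$.

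First, for each fixed $n$ I would dualize the tautological sequence at the $n$-th layer to obtain
\[
0 \to \Zha^{\pm}(\EC/K_{(n)})[p^\infty]^\vee \to \Sel^{\pm}(\EC/K_{(n)})^\vee \to \cM^{\pm}(\EC/K_{(n)})^\vee \to 0.
\]
The observation that drives the whole argument is that $\cM^{\pm}(\EC/K_{(n)})^\vee$ is $\Zp$-torsion-free up to a pseudo-null piece: by \eqref{pseudo for Mpm}, $\cM^{\pm}(\EC/K_{\cyc})^\vee$ is pseudo-isomorphic to a direct sum of modules of the form $\Lambda/\langle\Phi_{c^{\pm}_i}(x)\rangle$, each of which is a free $\Zp$-module of finite rank. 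Together with the finiteness of $\Zha^{\pm}(\EC/K_{(n)})[p^\infty]$, this identifies $\Sel^{\pm}(\EC/K_{(n)})^\vee[p^\infty]$ with $\Zha^{\pm}(\EC/K_{(n)})[p^\infty]^\vee$ up to a finite error whose order is bounded in $n$.

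Second, Theorem~\ref{Control Theorem ss} provides an injection $\Sel^{\pm}(\EC/K_{(n)})\hookrightarrow\Sel^{\pm}(\EC/K_{\cyc})^{\Gamma_n}$ with cokernel of order bounded independently of $n$. Taking Pontryagin duals and using the standard identity $(M^{\Gamma_n})^\vee = M^\vee/\omega_n M^\vee$, I obtain a surjection
\[
\Sel^{\pm}(\EC/K_{\cyc})^\vee/\omega_n\Sel^{\pm}(\EC/K_{\cyc})^\vee \twoheadrightarrow \Sel^{\pm}(\EC/K_{(n)})^\vee
\]
with a finite kernel of bounded order. Passing to $p^\infty$-torsion (which is left-exact) and combining with the first step yields, for each $n$, a map
\[
\left(\Sel^{\pm}(\EC/K_{\cyc})^\vee/\omega_n\Sel^{\pm}(\EC/K_{\cyc})^\vee\right)[p^\infty]\longrightarrow \Zha^{\pm}(\EC/K_{(n)})[p^\infty]^\vee
\]
whose kernel and cokernel are both finite of order bounded uniformly in $n$.

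Third, I would take the inverse limit over $n$ along the natural transition maps (induced by corestriction). By the very definition of $\mathfrak{G}$ the left-hand side becomes $\mathfrak{G}(\Sel^{\pm}(\EC/K_{\cyc})^\vee)$. For the right-hand side, Pontryagin duality interchanges direct and inverse limits, giving
\[
\varprojlim_n \Zha^{\pm}(\EC/K_{(n)})[p^\infty]^\vee \;\simeq\; \Bigl(\varinjlim_n \Zha^{\pm}(\EC/K_{(n)})[p^\infty]\Bigr)^\vee \;=\; \Zha^{\pm}(\EC/K_{\cyc})[p^\infty]^\vee.
\]
The uniformly bounded finite discrepancies at each level form a Mittag-Leffler pro-system whose transition maps are eventually zero, so they vanish in the limit.

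The \emph{main obstacle} I anticipate is the last step — the bookkeeping that forces the two independent sources of finite error (the bounded cokernel from Control and the pseudo-null discrepancy in the structure theorem for $\cM^{\pm}$) to die in the limit rather than surviving as a pseudo-null correction. A secondary technical point is ensuring that the surjections and the $p^\infty$-torsion functor are compatible with the transition maps as $n$ varies, so that all inverse limits can be assembled into a genuine isomorphism of $\Lambda$-modules rather than merely a pseudo-isomorphism. Once these compatibilities are verified, the theorem follows essentially formally.
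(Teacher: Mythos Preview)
Your overall strategy matches the paper's: dualize the tautological sequence, identify $\Sel^{\pm}(\EC/K_{(n)})^\vee[p^\infty]$ with $\Zha^{\pm}(\EC/K_{(n)})^\vee$, compare with $\Sel^{\pm}(\EC/K_{\cyc})^\vee/\omega_n$ via the Control Theorem, and pass to the inverse limit. However, two of your technical steps contain genuine gaps.

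First, the claim that ``uniformly bounded finite discrepancies form a pro-system whose transition maps are eventually zero'' is false as stated: a constant inverse system of copies of $\Z/p\Z$ is uniformly bounded and Mittag--Leffler, yet its inverse limit is $\Z/p\Z$, not zero. Boundedness alone is not enough. The paper's argument is different and cleaner: one has $\varinjlim_n \ker(\psi_n) = \varinjlim_n \cok(\psi_n) = 0$ \emph{tautologically}, because the direct limit of the maps $\psi_n$ is the identity on $\Sel^{\pm}(\EC/K_{\cyc})$. Pontryagin duality then gives $\varprojlim_n \ker(\psi_n)^\vee = \varprojlim_n \cok(\psi_n)^\vee = 0$ directly, with no appeal to bounded orders.

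Second, your route to the identification $\Sel^{\pm}(\EC/K_{(n)})^\vee[p^\infty] \simeq \Zha^{\pm}(\EC/K_{(n)})^\vee$ introduces an unnecessary second source of error. You invoke the pseudo-isomorphism \eqref{pseudo for Mpm} for $\cM^{\pm}(\EC/K_{\cyc})^\vee$ (and the unstated hypothesis that $\Zha^{\pm}(\EC/K_{(n)})[p^\infty]$ is finite) to conclude torsion-freeness of $\cM^{\pm}(\EC/K_{(n)})^\vee$ only up to a finite piece. These extra errors are not visibly compatible with the transition maps, and you give no mechanism for them to vanish in the limit. The paper avoids this entirely: it observes directly from the definition that $\cM^{\pm}(\EC/K_{(n)})$ is $p$-divisible (as the kernel of a map from $\EC(K_{(n)})\otimes\Qp/\Zp$ whose target modulo $\cM^{\pm}(\EC/\K_{\cyc})$ has no $p$-torsion obstruction), so $\cM^{\pm}(\EC/K_{(n)})^\vee$ is $\Zp$-free on the nose. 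This gives the \emph{exact} equality $\Sel^{\pm}(\EC/K_{(n)})^\vee[p^\infty] = \Zha^{\pm}(\EC/K_{(n)})^\vee$, leaving only the Control-Theorem error to handle in the limit --- and that one vanishes by the tautological argument above.
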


\begin{proof}
Recall the map
\[
\psi_n: \Sel^{\pm}(\EC/K_{(n)}) \longrightarrow \Sel^{\pm}(\EC/K_{\cyc})^{\Gamma_n}
\]
appearing in the statement of the Control Theorem; see Theorem~\ref{Control Theorem ss}.
By definition, note that
\[
\varinjlim_n \ker(\psi_n) = \varinjlim_n \cok(\psi_n) = \{0\}
\]
and so are $\varprojlim_n \ker(\psi_n)^\vee[p^\infty]$ and $\varprojlim_n \cok(\psi_n)^\vee[p^\infty]$.
Now, using \cite[Lemma~2.1.4(3)]{Lee20} we have the following exact sequence
\begin{equation}
\label{will take limits}
0 \longrightarrow \cok(\psi_n)^\vee [p^\infty] \longrightarrow \frac{\Sel^{\pm}(\EC/K_{\cyc})^\vee}{\omega_n \Sel^{\pm}(\EC/K_{\cyc})^\vee}[p^\infty] \longrightarrow \Sel^{\pm}(\EC/K_{(n)})^\vee [p^\infty] \longrightarrow \ker(\psi_n)^\vee [p^\infty].
\end{equation}
Recall that
\[
\cM^{\pm}(\EC/K_{(n)})= \ker\left(\EC(K_{(n)}) \otimes \Qp/\Zp \longrightarrow \frac{H^1(\Kn, \EC[p^\infty])}{\cM^{\pm}(\EC/\K_{\cyc})} \right).
\]
Further note that $\EC(K_{(n)}) \otimes \Qp/\Zp$ is $p$-divisible and so is $\cM^{\pm}(\EC/\K_{\cyc}) = \EC(\K_{\cyc}) \otimes \Qp/\Zp$.
It now follows that $\cM^{\pm}(\EC/K_{(n)})$ is $p$-divisible and hence $\cM^{\pm}(\EC/K_{(n)})^\vee$ is $\Zp$-free.
In view of the tautological exact sequence \eqref{eqn: tautological ss}, we know that
\[
\Sel^{\pm}(\EC/K_{(n)})^\vee[p^\infty] = \Zha^{\pm}(\EC/K_{(n)})^\vee.
\]
The result follows upon taking projective limit of \eqref{will take limits}.
\end{proof}

\edit{In what follows, we suppose that $K/\Q$ is an abelian extension such that $p$ is unramified in $K$.}
Since $\Sel^{\pm}(\EC/K_{\cyc})^\vee$ is a finitely generated and torsion $\Lambda$-module \edit{\cite[Proposition~2.3 and Equation~(5.7)]{Kat21}}, we have the following result.

\begin{theorem}
\label{our analogue of Lee 3.0.6}
Suppose that $\Zha^{\pm}(\EC/K_{(n)})[p^\infty]$ is finite for all $n\geq 0$.
Suppose that
\[
\Sel^{\pm}(\EC/K_{\cyc})^\vee \sim \left( \bigoplus_{i=1}^d \frac{\Lambda}{\langle g^{l_i}_i(x) \rangle}\right) \oplus \left( \bigoplus_{\substack{m=1 \\ f_m\geq 2}}^f \frac{\Lambda}{\langle \Phi^{f_m}_{a_m}(x)\rangle}\right) \oplus \left( \bigoplus_{r=1}^t \frac{\Lambda}{\langle \Phi_{b_r}(x)\rangle}\right).
\]
Then, the following assertions hold
\begin{align*}
\Zha^{\pm}(\EC/K_{\cyc})[p^\infty]^\vee & \sim \left( \bigoplus_{i=1}^d \frac{\Lambda}{\langle g^{l_i}_i(x) \rangle}\right) \oplus \left( \bigoplus_{\substack{m=1 \\ f_m\geq 2}}^f \frac{\Lambda}{\langle \Phi^{f_m - 1}_{a_m}(x)\rangle}\right)\\
\cM^{\pm}(\EC/K_{\cyc})^\vee & \sim \left( \bigoplus_{m=1}^f \frac{\Lambda}{\langle \Phi_{a_m}(x)\rangle}\right) \oplus \left( \bigoplus_{r=1}^t \frac{\Lambda}{\langle \Phi_{b_r}(x)\rangle}\right),
\end{align*}
where the irreducible polynomials $g_i(x)$ are coprime to $\Phi_m(x)$ for all $m$.
\end{theorem}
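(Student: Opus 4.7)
The plan is to mirror the ordinary-case strategy of \cite{Lee20} (quoted as \eqref{Lee expression} and \eqref{Lee sha}) and combine it with the structural results on $\cM^{\pm}$ established in Section~\ref{sec: pm MW group}. Since $\Sel^{\pm}(\EC/K_{\cyc})^\vee$ is a finitely generated torsion $\Lambda$-module in this setting, the structure theorem produces an elementary decomposition into cyclic pieces of the forms $\Lambda/p^{m_i}$, $\Lambda/\langle g_i^{l_i}\rangle$ with $g_i$ coprime to all cyclotomic polynomials, $\Lambda/\langle \Phi_{a_m}^{f_m}\rangle$ with $f_m\geq 2$, and $\Lambda/\langle \Phi_{b_r}\rangle$. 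The content of the first assertion is then that no $\Lambda/p^{m_i}$ summands appear, which is where the finiteness hypothesis enters: such a summand would force the $p$-power torsion of $\Sel^{\pm}(\EC/K_{\cyc})^\vee$ modulo $\omega_n$ to grow like $p^{m_i p^n}$, and via the control theorem (Theorem~\ref{Control Theorem ss}) would inject, with bounded cokernel, into $\Sel^{\pm}(\EC/K_{(n)})^\vee[p^\infty] = \Zha^{\pm}(\EC/K_{(n)})[p^\infty]^\vee$, contradicting the assumed finiteness layer-by-layer.

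Next, I would deduce the structure of $\Zha^{\pm}(\EC/K_{\cyc})[p^\infty]^\vee$ by applying the operator $\mathfrak{G}$ to each summand of the above decomposition, using the isomorphism $\Zha^{\pm}(\EC/K_{\cyc})^\vee \simeq \mathfrak{G}(\Sel^{\pm}(\EC/K_{\cyc})^\vee)$ proved just above. A direct computation shows that $\mathfrak{G}$ preserves (up to pseudo-isomorphism) each non-cyclotomic factor $\Lambda/\langle g_i^{l_i}\rangle$, lowers the exponent on each cyclotomic factor $\Lambda/\langle \Phi_{a_m}^{f_m}\rangle$ by one (because $\omega_n$ contains $\Phi_{a_m}$ exactly once for $n\geq a_m$, so the stable $p^\infty$-torsion of the $\omega_n$-quotients is $\Lambda/\langle \Phi_{a_m}^{f_m-1}\rangle$), and annihilates each single-power $\Lambda/\langle\Phi_{b_r}\rangle$ (this module is $\Zp$-free of rank $\deg\Phi_{b_r}$, so has no $p^\infty$-torsion once $n\geq b_r$). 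Assembling these yields the second assertion.

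Finally, the description of $\cM^{\pm}(\EC/K_{\cyc})^\vee$ is obtained by dualizing the tautological exact sequence~\eqref{eqn: tautological ss}, which remains exact because $\cM^{\pm}(\EC/K_{\cyc})^\vee$ is $\Zp$-free (as noted in the proof of the $\mathfrak{G}$-isomorphism). The resulting sequence
\[
0 \longrightarrow \Zha^{\pm}(\EC/K_{\cyc})[p^\infty]^\vee \longrightarrow \Sel^{\pm}(\EC/K_{\cyc})^\vee \longrightarrow \cM^{\pm}(\EC/K_{\cyc})^\vee \longrightarrow 0
\]
lets me read off $\cM^{\pm}(\EC/K_{\cyc})^\vee$ as the elementary quotient of the two structures already described: the $g_i^{l_i}$ factors cancel, the resulting exponent of each $\Phi_{a_m}$ is $f_m - (f_m - 1) = 1$ (contributing a copy of $\Lambda/\langle\Phi_{a_m}\rangle$), and the $\Phi_{b_r}$ factors pass through unchanged. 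The main obstacle is the first step, extracting $\mu^{\pm}=0$ from the stated hypothesis; the $\mathfrak{G}$-computation and the dualization of the tautological sequence are essentially formal once the first decomposition is in hand, but excluding $\Lambda/p^{m}$ summands genuinely requires the bounded-cokernel strengthening of the control theorem and not merely finiteness at each finite layer.
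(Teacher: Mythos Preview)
Your overall strategy is correct and is precisely what the paper does: it simply says ``The arguments in \cite[Corollary~3.0.6]{Lee20} go through in this case,'' and your outline (structure theorem for the first line, the $\mathfrak{G}$-computation for the second, the dualized tautological sequence~\eqref{eqn: tautological ss} for the third) is exactly Lee's argument.

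There is, however, one point where you have misread the statement. You assert that ``the content of the first assertion is that no $\Lambda/p^{m_i}$ summands appear,'' and you then argue for $\mu^{\pm}=0$ by saying a $\Lambda/p^{m}$ piece would force $\Zha^{\pm}(\EC/K_{(n)})[p^\infty]$ to have order growing like $p^{m p^n}$, ``contradicting the assumed finiteness layer-by-layer.'' This argument is invalid: $p^{m p^n}$ is finite for every $n$, so layer-wise finiteness of $\Zha^{\pm}$ is perfectly compatible with $\mu^{\pm}>0$. (Indeed, in the ordinary case $\mu=0$ is a genuine open conjecture, not a consequence of $\Sha$ being finite at each layer.) Fortunately, this does not damage the proof, because the first assertion does \emph{not} claim $\mu^{\pm}=0$: the $g_i$ are just the prime elements of $\Lambda$ coprime to every $\Phi_m$, and $p$ is such an element. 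The sentence preceding the theorem in the paper (``Since $\Sel^{\pm}(\EC/K_{\cyc})^\vee$ is a finitely generated and torsion $\Lambda$-module, we have the following result'') confirms that the first line is purely the structure theorem with the cyclotomic factors separated out. Your $\mathfrak{G}$-computation then handles a $\Lambda/p^m$ summand correctly anyway (it is preserved, so appears among the $g_i^{l_i}$ in both the $\Sel^{\pm}$ and $\Zha^{\pm}$ lines), and the quotient for $\cM^{\pm}$ is unaffected. So drop the $\mu=0$ discussion and what you identified as ``the main obstacle''; the rest of your proposal is correct and complete.
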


\begin{proof}
The arguments in \cite[Corollary~3.0.6]{Lee20} go through in this case.
\end{proof}

We will now prove a supersingular analogue of Theorem~\ref{aj's are unique} where we give more insight into the precise structure of the $\Lambda$-modules when the elliptic curve is defined over $\Q$.

\begin{theorem}
\label{Thm 3 in note from June 16}
Fix an elliptic curve $\EC/\Q$ and an odd prime $p$ such that $a_p(\EC) =0$.
Suppose that $\Zha^{\pm}(\EC/\Q_{(n)})[p^{\infty}]$ is finite for all $n$ and that $\Zha(\EC/\Q_{\cyc})[p^\infty]$ is finite.
Then $g_i(x)$'s and $a_m$'s arising in Theorem~\ref{our analogue of Lee 3.0.6} are distinct.
Moreover, in the explicit structure of the dual of plus (resp.~minus) Selmer group, the integer $a_m$ is always even (resp.~odd or 0).
\end{theorem}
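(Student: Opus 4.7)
My plan is to adapt Theorem~\ref{aj's are unique} to the supersingular setting, using the $\pm$-structural refinements from Section~\ref{sec: pm MW group}. The starting point is the fundamental exact sequence
\[
\Lambda \xrightarrow{\phi^{\pm}} \Sel^{\pm}(\EC/\Q_{\cyc})^\vee \longrightarrow \Sel_0(\EC/\Q_{\cyc})^\vee \longrightarrow 0,
\]
obtained by dualizing the inclusion $\Sel_0 \hookrightarrow \Sel^{\pm}$ (whose cokernel embeds in $\cM^{\pm}(\EC/\K_{\cyc}) = \EC^{\pm}(\K_{\cyc}) \otimes \Qp/\Zp$) and invoking Corollary~\ref{Corollary 2 Note 2} to identify $(\cM^{\pm}(\EC/\K_{\cyc}))^\vee \simeq \Lambda$. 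In particular, $\image(\phi^{\pm}) = (\Sel^{\pm}/\Sel_0)^\vee$ is a cyclic $\Lambda$-module.

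For distinctness, I would apply the snake lemma to the tautological sequences for $(\cM, \Sel_0, \Zha)$ and $(\cM^{\pm}, \Sel^{\pm}, \Zha^{\pm})$ over $\Q_{\cyc}$ to produce $0 \to \cM^{\pm}/\cM \to \Sel^{\pm}/\Sel_0 \to \Zha^{\pm}/\Zha \to 0$. Dualizing yields
\[
0 \to (\Zha^{\pm}/\Zha)^\vee \to (\Sel^{\pm}/\Sel_0)^\vee \to (\cM^{\pm}/\cM)^\vee \to 0,
\]
whose middle term is the cyclic $\image(\phi^{\pm})$. Applying Corollary~\ref{Corollary 2 Note 2} to the inclusion $\Zha^{\pm}/\Zha \hookrightarrow \cM^{\pm}(\EC/\K_{\cyc})$ shows that $(\Zha^{\pm}/\Zha)^\vee$ is a quotient of $\Lambda$, hence cyclic. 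Combined with the finiteness of $\Zha(\EC/\Q_{\cyc})[p^\infty]$, this forces $\Zha^{\pm}(\EC/\Q_{\cyc})^\vee$ to be pseudo-isomorphic to a cyclic $\Lambda$-module. A cyclic torsion $\Lambda$-module has each prime of $\Lambda$ occurring at most once in its elementary decomposition; matching with the explicit decomposition of $\Zha^{\pm\vee}$ in Theorem~\ref{our analogue of Lee 3.0.6}, this forces the primes $\langle \Phi_{a_m} \rangle$ to be pairwise distinct.

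The parity claim is the main obstacle and the technical heart of the argument. An explicit computation using Theorems~\ref{main result fine Selmer}, \ref{thm 6.13} and the above exact sequence yields that the $\Phi_k$-multiplicity of $\charac_\Lambda(\image(\phi^{\pm}))$ equals $\sum_{a_m = k}(f_m - 1) + \epsilon^{\pm}_k$, where the correction $\epsilon^{\pm}_k \in \{0, 1\}$ equals $1$ precisely at the indices of parity compatible with $\pm$ (i.e., $k$ even or $k = 0$ for $+$, and $k$ odd or $k = 0$ for $-$). This arises because $\charac_\Lambda((\cM^{\pm}/\cM)^\vee)$ is supported at exactly those indices, a direct consequence of Theorem~\ref{thm 6.13}. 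To upgrade this into the parity statement, I would leverage the explicit asymmetry in Theorem~\ref{thm 5 note of June 16} and track how the canonical generator $1 \in \Lambda \simeq (\cM^{\pm}(\EC/\K_{\cyc}))^\vee$ (constructed in Lemma~\ref{Lemma 1 Note 2}) maps into $\Sel^{\pm\vee}$. The aim is to show that a higher-power cyclotomic piece $\Lambda/\langle \Phi_{a_m}^{f_m}\rangle$ at an index of the wrong parity in $\Sel^{\pm\vee}$ would produce an incompatibility between the Shafarevich--Tate sub-module $\bigoplus \Lambda/\langle \Phi_{a_m}^{f_m - 1}\rangle$ and the Mordell--Weil correction quotient $(\cM^{\pm}/\cM)^\vee$ simultaneously hosted by the single cyclic module $\image(\phi^{\pm})$. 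Verifying this compatibility rigorously — rather than only at the level of characteristic ideals, where both parities are consistent — is where I anticipate the bulk of the work.
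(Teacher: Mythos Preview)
Your distinctness argument hinges on the claimed inclusion $\Zha^{\pm}/\Zha \hookrightarrow \cM^{\pm}(\EC/\K_{\cyc})$, but no such map is available: $\Zha^{\pm}/\Zha$ arises as a \emph{quotient} of $\Sel^{\pm}/\Sel_0$ (via your snake-lemma sequence), not as a subobject of the local Mordell--Weil group. What you do have is the dual inclusion $(\Zha^{\pm}/\Zha)^\vee \hookrightarrow \image(\phi^{\pm})$ into a cyclic torsion $\Lambda$-module. This is enough: localizing at each height-one prime of $\Lambda$ (a DVR), one sees that any submodule of a cyclic torsion $\Lambda$-module is pseudo-cyclic, and your conclusion follows. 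So the distinctness part is salvageable, and in fact gives a cleaner route than the paper's, which runs the same multiplicity count as in Theorem~\ref{aj's are unique} separately at each $\Phi_n$.

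The parity claim is where your proposal has a genuine gap. You correctly observe that the characteristic-ideal bookkeeping alone cannot distinguish the two parities, and then leave the structural upgrade unspecified. The paper's mechanism is concrete and does not require tracking the Kataoka generator. For the $+$ case and $n$ odd, Proposition~\ref{Prop 3 note 21 May} gives $r_n^+ = e_n - \theta_n$, which coincides with the $\Phi_n$-multiplicity of $\cM(\EC/\Q_{\cyc})^\vee$; via Theorem~\ref{our analogue of Lee 3.0.6} this means the $\Phi_n$-multiplicity of $\Sel_0^\vee$ is $c+d$ rather than $c-1+d$. The count $f_1+\cdots+f_c+d = r\alpha + c + d$ then forces $\alpha=1$ and very rigid values of $(c,f_j,r)$. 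At that point one examines the induced map of elementary modules $\delta:\Lambda/\omega^r \to \Lambda/\omega^{f_1}\oplus\cdots$ and shows that if $\delta(\bar 1)\in\omega\cdot(\text{target})$ the kernel is infinite, while if not, tensoring the sequence \eqref{intro theta} with $\Lambda/\omega$ produces an infinite image that contradicts the pseudo-isomorphism $\Sel^{+\vee}\otimes\Lambda/\omega \sim \Sel_0^\vee\otimes\Lambda/\omega$ coming from Theorem~\ref{our analogue of Lee 3.0.6}. This is the missing ingredient in your outline: the parity asymmetry enters precisely through the shift from $c-1+d$ to $c+d$, and the contradiction is extracted at the level of elementary modules, not by tracing a canonical generator.
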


\begin{proof}
We will prove the result for $\Sel^+(\EC/K_{\cyc})^\vee$ and the proof of the other case is similar.
Moreover, the proof of the claim that $g_i(x)$'s are distinct is same as that of Theorem~\ref{aj's are unique}.
Now, let us prove the statement about $a_j$'s.
We begin by analyzing the $\Phi_{n}$-part of $\Sel^+(\EC/K_{\cyc})^\vee$.

Let $\omega = \Phi_n$-part of $\Sel^+(\EC/K_{\cyc})^\vee$ be
\[
\left( \bigoplus_{j=1}^c \frac{\Lambda}{\omega^{f_j}}\right) \oplus \left( \bigoplus_{k=1}^d \frac{\Lambda}{\omega}\right) \text{ where } f_c \geq f_{c-1} \geq \ldots \geq f_1 \geq 2.
\]
Observe that we have the following exact sequence
\begin{equation}
\label{intro theta}
(\EC^+(\Q_{\cyc,p}) \otimes \Qp/\Zp)^\vee \xrightarrow{\phi} \Sel^+(\EC/\Q_{\cyc})^\vee \longrightarrow \Sel_0(\EC/\Q_{\cyc})^\vee \longrightarrow 0.
\end{equation}
In view of Lemma~\ref{Corollary 2 Note 2} we know that the first module is isomorphic to $\Lambda$.
Suppose that
\[
\image(\phi) \otimes \frac{\Lambda}{\omega} \sim \left( \frac{\Lambda}{\omega}\right)^\alpha
\]
is a pseudo-isomorphism then $\alpha$ is either 0 or 1 (by the same argument as in Theorem~\ref{aj's are unique}).
As before, we have the following map of elementary modules with a finite kernel 
\[
\delta: \mathscr{E}(\image(\phi)) \longrightarrow \mathscr{E}(\Sel^{+}(\EC/\Q_{\cyc})^\vee).
\]
Note that the $\omega$-part of $\image(\phi)$ is $\left(\frac{\Lambda}{\omega^r}\right)^\alpha$ where $r \leq f_c$.

\smallskip

\underline{\emph{Case 1: When $n\geq 0$ and even.}}

In view of the assumptions, note that \cite[Theorem~C]{Lei23}, Proposition~\ref{Prop 3 note 21 May}, and Theorem~\ref{our analogue of Lee 3.0.6} imply that the $\Lambda$-characteristic ideal of $\Sel_0(\EC/\Q_{\cyc})^\vee$ has $\omega^{c-1+d}$ as its $\omega$-part.
The rest of the proof can be completed in the exact same way as in Theorem~\ref{aj's are unique} to show that $c \leq 1$ in this case.

\smallskip

\underline{\emph{Case 2: When $n\geq 1$ is odd.}}

In view of Proposition~\ref{Prop 3 note 21 May}, Theorem~\ref{our analogue of Lee 3.0.6}, and \cite[Theorem~C]{Lei23}, the $\omega$-part of $\Sel_{0}(\EC/\Q_{\cyc})^\vee$  is $\left(\frac{\Lambda}{\omega}\right)^{c+d}$.
Now, suppose that $c\geq 1$.
As in the proof of the previous case, we have
\[
f_1 + f_2 + \ldots + f_c + d = r\alpha + c + d.
\]
Since each $f_i > 1$ and $c\geq 1$, it forces that $\alpha =1$ and that
\[
f_1 + f_2 + \ldots + f_c = r + c \leq f_c + c.
\]
The only way for this to be satisfied is if $c=2$, $f_1 = 2$, and $f_2 =r$.
Therefore, the map $\delta$ of elementary modules is the following map with finite kernel
\[
\delta: \left(\frac{\Lambda}{\omega^r}\right) \longrightarrow \left(\frac{\Lambda}{\omega^2}\right) \oplus \left(\frac{\Lambda}{\omega^r}\right) \oplus \left(\frac{\Lambda}{\omega}\right)^d.
\]
Suppose that $\delta(\bar{1}) = \omega \cdot y$ for some some $y$ on the right side of the above map.
Then $\left( \frac{\omega^{r-1} \Lambda}{\omega^r} \right) \subseteq \ker(\delta)$ which is not possible.
This forces the image of $\delta \otimes \operatorname{id}$ to be infinite.
In other words, the image of
\[
\image(\phi) \otimes \frac{\Lambda}{\omega} \longrightarrow \Sel^+(\EC/\Q_{\cyc})^\vee \otimes \frac{\Lambda}{\omega}
\]
is also infinite.
However, we know from Theorem~\ref{our analogue of Lee 3.0.6} that the following natural map
\[
\Sel^{\pm}(\EC/\Q_{\cyc})^\vee \otimes \frac{\Lambda}{\omega} \longrightarrow \Sel_0(\EC/\Q_{\cyc})^\vee \otimes \frac{\Lambda}{\omega}
\]
is a pseudo-isomorphism.
This is clearly a contradiction in view of the exact sequence \eqref{intro theta}.

Thus, $c=0$ in this case.
This means that the term $\frac{\Lambda}{\langle \Phi^{f_n}_n(x) \rangle}$ does not appear in the decomposition of $\Sel^+(\EC/\Q_{\cyc})^\vee$ in this case.
\end{proof}

\begin{corollary}
Fix an elliptic curve $\EC/\Q$ and an odd prime $p$ such that $a_p =0$.
Suppose that $\Zha^{\pm}(\EC/\Q_{(n)})[p^{\infty}]$ is finite for all $n$ and that $\Zha(\EC/\Q_{\cyc})[p^\infty]$ is finite.
Then for $t\geq e_0$
\[
\text{cyclotomic part of } \gcd\left( \charac_{\Lambda}(\Sel^{+}(\EC/\Q_{\cyc})^\vee), \charac_{\Lambda}(\Sel^{-}(\EC/\Q_{\cyc})^\vee)\right) = \langle x^{t} \prod_{\substack{n\geq 1\\e_n>1}} \Phi^{e_n - 1}_n(x) \rangle.
\]
If $\Zha^{\pm}(\EC/\Q_{\cyc})[p^\infty]^{\Gamma}$ is finite then $t=e_0$.
\end{corollary}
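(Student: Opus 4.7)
The plan is to combine the tautological short exact sequence
\[
0 \longrightarrow \cM^{\pm}(\EC/\Q_{\cyc}) \longrightarrow \Sel^{\pm}(\EC/\Q_{\cyc}) \longrightarrow \Zha^{\pm}(\EC/\Q_{\cyc})[p^\infty] \longrightarrow 0,
\]
which yields the multiplicative factorization
\[
\charac_{\Lambda}(\Sel^{\pm}(\EC/\Q_{\cyc})^\vee) = \charac_{\Lambda}(\cM^{\pm}(\EC/\Q_{\cyc})^\vee) \cdot \charac_{\Lambda}(\Zha^{\pm}(\EC/\Q_{\cyc})[p^\infty]^\vee),
\]
with the explicit formula for $\charac(\cM^{\pm})$ from Theorem~\ref{thm 6.13} and the parity restriction from Theorem~\ref{Thm 3 in note from June 16}. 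Working prime-by-prime in $\Lambda$ then lets me compute the two sides of the claimed $\gcd$ equality one irreducible factor at a time.

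For a cyclotomic prime $\Phi_n$ with $n>0$, I would specialize Theorem~\ref{thm 6.13} to $K=\Q$ (where $\theta_n=1$ if $e_n>0$ and $\theta_n=0$ otherwise) to read off $v_{\Phi_n}(\charac(\cM^+)) = e_n-1$ when $n$ is odd with $e_n>0$ and $v_{\Phi_n}(\charac(\cM^+)) = e_n$ when $n$ is even (symmetrically for $\cM^-$). By Theorem~\ref{Thm 3 in note from June 16}, every $a_m^+$ is even, so the cyclotomic contribution of $\Zha^+$ at $\Phi_n$ vanishes for odd $n>0$; hence $v_{\Phi_n}(\charac(\Sel^+)) = e_n-1$ for such $n$. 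Meanwhile $v_{\Phi_n}(\charac(\Sel^-)) \geq v_{\Phi_n}(\charac(\cM^-)) = e_n$, so the minimum of the two is $e_n-1$. The analogous bookkeeping for even $n>0$ gives the same minimum, which is $0$ whenever $e_n \leq 1$; this matches the product $\prod_{n\geq 1,\, e_n>1}\Phi_n^{e_n-1}$ in the statement.

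At the prime $\Phi_0 = x$ both $\charac(\cM^{\pm})$ contribute $x^{e_0}$ and each $\Zha^{\pm}$ may contribute further $x$-powers, because $0$ falls under both the ``even'' and the ``odd or $0$'' clauses of Theorem~\ref{Thm 3 in note from June 16}. Setting $t := e_0 + \min\bigl(v_x(\charac(\Zha^+)),\, v_x(\charac(\Zha^-))\bigr)$ therefore gives $t\geq e_0$. Under the additional hypothesis that $\Zha^{\pm}(\EC/\Q_{\cyc})[p^\infty]^\Gamma$ is finite, the standard criterion that a torsion $\Lambda$-module $M$ has finite $\Gamma$-invariants if and only if $x$ does not divide $\charac_\Lambda(M)$ forces $v_x(\charac(\Zha^{\pm})) = 0$, and therefore $t = e_0$.

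The main obstacle I expect is the non-cyclotomic part: Theorem~\ref{our analogue of Lee 3.0.6} shows that both $\charac(\Sel^+)$ and $\charac(\Sel^-)$ can carry factors $\prod_i g_i^{l_i,\pm}$ with the $g_i$'s coprime to every $\Phi_n$, and the claimed equality of ideals requires these non-cyclotomic pieces to have trivial $\gcd$. Since Theorem~\ref{Thm 3 in note from June 16} only constrains cyclotomic primes, a separate argument is needed here --- either by extending the elementary-module comparison used in the proof of Theorem~\ref{Thm 3 in note from June 16} to non-cyclotomic irreducible divisors, or by invoking a variant of Kobayashi's $\pm$-main conjecture to rule out a shared non-cyclotomic factor between $L_p^+$ and $L_p^-$.
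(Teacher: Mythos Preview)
Your cyclotomic bookkeeping is correct and is essentially the same argument the paper gives, only spelled out in more detail. The paper's proof simply asserts that, by Theorem~\ref{Thm 3 in note from June 16}, the $\Phi_n$-part of the $\gcd$ coincides with the $\Phi_n$-part of $\charac_\Lambda(\cM(\EC/\Q_{\cyc})^\vee)$ for $n\ge 1$, that at $n=0$ one only has a divisibility, and that the extra finiteness hypothesis on $\Zha^{\pm}(\EC/\Q_{\cyc})[p^\infty]^\Gamma$ pins down the $\Phi_0$-part. Your use of the tautological factorisation $\charac(\Sel^{\pm})=\charac(\cM^{\pm})\cdot\charac(\Zha^{\pm})$, together with Theorem~\ref{thm 6.13} and the parity restriction of Theorem~\ref{Thm 3 in note from June 16}, is exactly how one unpacks that sentence.

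The ``main obstacle'' you raise---that the non-cyclotomic factors $g_i^{l_i,\pm}$ from Theorem~\ref{our analogue of Lee 3.0.6} could in principle share a common divisor and thereby contribute to the $\gcd$---is a genuine point, and the paper's own proof does not address it: it argues entirely $\Phi_n$-prime by $\Phi_n$-prime and never mentions the $g_i$'s. So on this issue you are being more careful than the paper. As written, the statement should either be read as an assertion about the cyclotomic part of the $\gcd$, or one needs an additional input (along the lines you suggest) to rule out a shared non-cyclotomic factor of $\charac_\Lambda(\Zha^+(\EC/\Q_{\cyc})[p^\infty]^\vee)$ and $\charac_\Lambda(\Zha^-(\EC/\Q_{\cyc})[p^\infty]^\vee)$. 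Your proposal to try to push the elementary-module comparison from the proof of Theorem~\ref{Thm 3 in note from June 16} to non-cyclotomic primes, or to invoke the $\pm$ main conjecture, are both reasonable directions; neither appears in the paper.
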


\begin{proof}
By Theorem~\ref{Thm 3 in note from June 16} we know that the $\Phi_n$-part of the left hand side (of the equality in the statement of the corollary) and that of $\charac_{\Lambda}(\cM(\EC/\Q_{\cyc})^\vee)$ are the same when $n\geq 1$.

When $n=0$, the above Theorem~\ref{Thm 3 in note from June 16} only gives a divisibility.
The finiteness assumption on $\Zha^{\pm}(\EC/\Q_{\cyc})[p^\infty]^{\Gamma}$ ensures that the $\Phi_0$-part of $\Sel^{\pm}(\EC/\Q_{\cyc})^\vee$ and $\cM^{\pm}(\EC/\Q_{\cyc})^\vee$ are the same.
The result follows from Theorem~\ref{thm 6.13}.
\end{proof}

\begin{corollary}
Fix an elliptic curve $\EC/\Q$ and an odd prime $p$ of good \emph{supersingular} reduction such that $a_p(\EC)=0$.
Suppose that $\Zha^{\pm}(\EC/\Q_{(n)})[p^{\infty}]$ is finite for all $n$ and that $\Zha(\EC/\Q_{\cyc})[p^\infty]$ is finite.
Then $\Zha^{\pm}(\EC/\Q_{\cyc})[p^\infty]^\vee$ is pseudo-isomorphic to a cyclic $\Lambda$-module.
 Moreover, the $\Phi_m(x)$ appearing in the structure of $\Zha^{+}(\EC/\Q_{\cyc})[p^\infty]^\vee$ have $m$ even whereas, the $\Phi_m(x)$ appearing in the structure of $\Zha^{-}(\EC/\Q_{\cyc})[p^\infty]^\vee$ have $m$ odd or 0.
\end{corollary}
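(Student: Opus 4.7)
The plan is to deduce this corollary essentially as a bookkeeping consequence of the two structural results proven immediately before it. First, I would invoke Theorem~\ref{our analogue of Lee 3.0.6} to identify the cyclotomic part of $\Zha^{\pm}(\EC/\Q_{\cyc})[p^\infty]^\vee$ as the pseudo-isomorphism class of $\bigoplus_{m:\,f_m\geq 2} \Lambda/\langle\Phi_{a_m}^{f_m-1}(x)\rangle$, where the indices $a_m$ and exponents $f_m$ are exactly those appearing in the decomposition of $\Sel^{\pm}(\EC/\Q_{\cyc})^\vee$. This matching of indices is what lets me transfer information directly from the Selmer side.

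Next, I would appeal to Theorem~\ref{Thm 3 in note from June 16}, which asserts that the $a_m$'s are pairwise distinct in the $\Sel^{\pm}$-decomposition; consequently they remain pairwise distinct in the cyclotomic part of $\Zha^{\pm}$. The polynomials $\Phi_{a_m}(x)$ are then pairwise coprime irreducible distinguished elements of $\Lambda$, so a Chinese Remainder style argument gives the pseudo-isomorphism
\[
\bigoplus_{m:\,f_m\geq 2} \frac{\Lambda}{\langle\Phi_{a_m}^{f_m-1}(x)\rangle} \;\sim\; \frac{\Lambda}{\bigl\langle\prod_{m:\,f_m\geq 2}\Phi_{a_m}^{f_m-1}(x)\bigr\rangle},
\]
and the right-hand side is manifestly cyclic over $\Lambda$. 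The parity statement then follows at once: Theorem~\ref{Thm 3 in note from June 16} already asserts that each $a_m$ appearing in the structure of $\Sel^+(\EC/\Q_{\cyc})^\vee$ is even while each $a_m$ appearing in $\Sel^-(\EC/\Q_{\cyc})^\vee$ is odd or zero, and the cyclotomic part of $\Zha^{\pm}(\EC/\Q_{\cyc})[p^\infty]^\vee$ inherits its indexing from the middle summand of $\Sel^{\pm}$.

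The only technical point requiring genuine verification is the Chinese Remainder pseudo-isomorphism: although the ideals $(\Phi_{a_m})$ and $(\Phi_{a_{m'}})$ are not comaximal in $\Lambda$ for distinct $a_m,a_{m'}$, the polynomials $\Phi_{a_m}^{f_m-1}(x)$ are coprime in the UFD $\Lambda$. Hence the natural projection from the cyclic quotient to the direct sum has zero kernel (intersection equals product for coprime principal ideals in a UFD) and cokernel annihilated by a fixed power of $p$ arising from a B\'ezout identity in $\Qp[x]$; this cokernel is finite because each $\Lambda/\langle\Phi_{a_m}^{f_m-1}\rangle$ has trivial $\mu$-invariant. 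With this standard step in place, both conclusions of the corollary follow without any further arithmetic input, so I do not anticipate a serious obstacle beyond assembling the citations correctly.
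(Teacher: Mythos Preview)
Your proposal is correct and matches the paper's intended argument: the paper states this corollary without proof, treating it as immediate from Theorem~\ref{our analogue of Lee 3.0.6} (which identifies the cyclotomic part of $\Zha^{\pm}(\EC/\Q_{\cyc})[p^\infty]^\vee$ with $\bigoplus_{m}\Lambda/\langle\Phi_{a_m}^{f_m-1}(x)\rangle$) together with Theorem~\ref{Thm 3 in note from June 16} (which gives the distinctness and parity of the $a_m$). Your Chinese Remainder justification is slightly more explicit than what the paper records, but this is exactly the implicit step the authors have in mind, paralleling the ordinary-reduction corollary that they deduce ``immediately'' from Theorem~\ref{aj's are unique} and \eqref{Lee sha}.
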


\bibliographystyle{amsalpha}
\bibliography{references}

\end{document}